\pgfplotsset{compat=1.10}
\newtheorem{theorem}{Theorem}[section]
\newtheorem{corollary}[theorem]{Corollary}
\newtheorem{lemma}[theorem]{Lemma}
\newtheorem{proposition}[theorem]{Proposition}
\newtheorem{remark}[theorem]{Remark}
\newcommand{\rd}{\mathrm{d}}
\definecolor{cadmiumgreen}{rgb}{0.0, 0.42, 0.24}
\numberwithin{equation}{section}
\numberwithin{figure}{section}
\begin{document}

\title[ ]{On a  Free Boundary Model for Three-Dimensional MEMS with a Hinged Top Plate II: Parabolic Case}


\author{Katerina Nik}
\address{Faculty of Mathematics\\ University of Vienna \\ Oskar-Morgenstern-Platz 1 \\ A--1090 Vienna\\ Austria}
\email{katerina.nik@univie.ac.at}
%
%
\date{January 29, 2021}
\keywords{MEMS, free boundary problem, hinged plate, well-posedness, touchdown}
\subjclass[2020]{35K91, 35R35, 35M33, 35Q74, 35B44}


\begin{abstract}
A parabolic free boundary problem modeling a three-dimensional electrostatic MEMS 
device is investigated. The device is made of a rigid ground plate and an elastic top plate 
which is hinged at its boundary, the plates being held at different voltages. The model couples a fourth-order semilinear parabolic equation for the deformation of the top plate to a Laplace equation for the electrostatic potential in the device. 
The strength of the coupling is tuned by a parameter $\lambda$ which is proportional 
to the square of the applied voltage difference.  It is proven that the model is locally 
well-posed in time and that, for $\lambda$ sufficiently small, solutions exist globally in time. In addition, touchdown of the top plate on the ground plate is shown to be the only possible finite time singularity. 
\end{abstract}
%
\maketitle
%
\section{Introduction and main results}
Microelectromechanical systems (MEMS), which refer to tiny integrated devices combining electrical 
and mechanical elements, are essential parts of modern technology \cite{PeleskoBernstein2002, Younis2011}. In an idealized set-up, an electrostatically actuated MEMS device is built 
of two thin conducting plates: a rigid ground plate above which an elastic plate is suspended, see Figure ~\ref{cross}. Holding the two plates at different voltages generates a Coulomb force across the device and induces a deformation of the elastic plate, thereby changing the geometry of the device and converting electrostatic to mechanical energy.

\begin{figure}
	\begin{raggedleft}
		\begin{tikzpicture}[scale=0.73]
			\draw[black, line width = 2pt] (-0.2,0)--(0.2,0);
			\node at (-0.27,0.3) {${\color{black} 0}$};
			\draw[black, line width = 2pt] (-7,0)--(-7,-5);
			\draw[black, line width = 2pt] (7,-5)--(7,0);
			\draw[black, line width = 2pt] (-7,-5)--(7,-5);
			\draw[blue, line width = 2pt] plot[domain=-7:7] (\x,{-1.3-1.7*cos((0.78*pi*\x/7) r)});
			\draw[black, line width = 1pt, arrows=->] (3,0)--(3,-2.05);
			\node at (3.4,-0.7) {${\color{black} u}$};
			\node[draw,rectangle,white,fill=white, rounded corners=5pt] at (2,-4.5) {$\Omega_1$};
			\node at (-4.5,-3.5) {${\color{black} \Omega(u)}$};
			\node at (5.7,-5.75) {${\color{black} \textbf{ground plate } D}$};
			\draw (3.55,-5.75) edge[->,bend left, line width = 1pt] (2.3,-5.1);
			\node at (9.4,-1.63) {${\color{black} \textbf{elastic plate} }$};
			\draw (7.7,-1.6) edge[->,bend left, line width = 1pt] (5.4,-0.88);
			
			\node at (-0.3,1.7) {$z$};
			\draw[black, line width = 1pt, arrows = ->,dashed] (0,-5.7)--(0,1.7);
			\node at (8.6,0.3) {$x_1$};
			\draw[black, line width = 1pt, arrows = ->,dashed] (-7.7,0)--(8.7,0);
			\node at (1.53,0.53) {$x_2$};
			\draw[black, line width = 1pt, arrows = ->,dashed] (0,0)--(1.7,1);
			
		\end{tikzpicture}
		\caption{Cross section of the idealized MEMS device.}\label{cross}
	\end{raggedleft}
\end{figure}
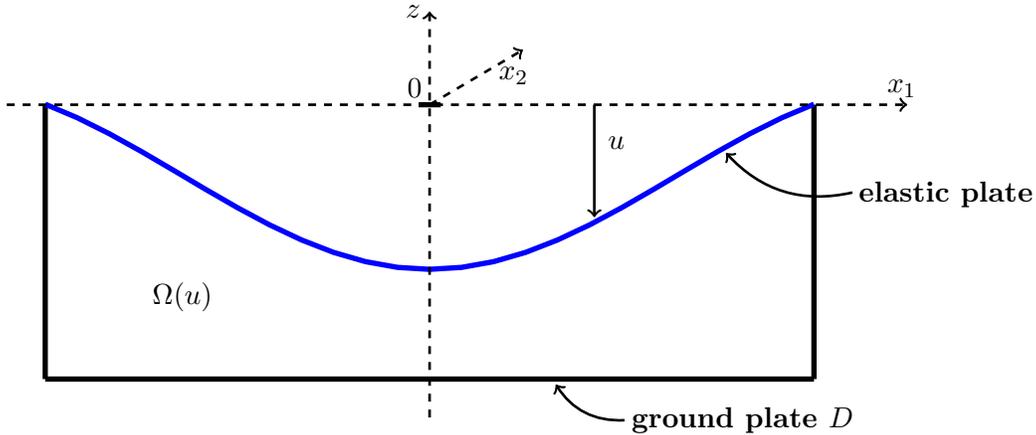
In order to construct a mathematical model, we assume that the physical state of the device is fully described by the vertical deformation of the elastic plate from rest and the electrostatic potential between the two plates. We further assume that the shape of the ground plate and that of the elastic plate at rest are presented by $D \subset \mathbb{R}^2$.
After a suitable rescaling the ground plate is located at height $z=-1$ while the elastic plate at rest is located at $z=0$. Denoting the vertical deformation of the elastic plate at time $t>0$ and position 
$x=(x_1,x_2) \in D$ by $u=u(t,x) \in (-1,\infty)$, the evolution of $u$ is given in the damping dominated regime by 
\begin{align}
	\partial_t u &+ \beta \Delta^2 u - \tau \Delta u \nonumber
	\\
	&= - \lambda \left( \varepsilon^2 \vert \nabla' \psi_{u(t)} (x,u(t,x)) \vert^2 + (\partial_z \psi_{u(t)}
	(x,u(t,x)))^2 \right), \quad x \in D, \; t>0,   \label{evolution1}
\end{align}
with hinged boundary conditions 
\begin{equation}
	u=\Delta u - (1 - \sigma) \kappa \partial_{\nu}u  = 0, \quad x \in \partial D, \; t>0,
	\label{evolution2}
\end{equation}
and initial condition
\begin{equation}
	u(0,x)=u^0(x), \quad  x \in D.
	\label{evolution3}
\end{equation}
The parameters $\beta>0$, $\sigma \in (-1,1)$, and $\tau \geq 0$ arise from the modeling of the mechanical forces and reflect bending, torsion, and stretching of the elastic plate, respectively. The 
right-hand side of \eqref{evolution1} is due to the electrostatic forces exerted on the elastic plate with parameter $\lambda >0$ proportional to the square of the applied voltage difference and the device's aspect ratio $\varepsilon>0$.  The boundary conditions \eqref{evolution2} mean that the elastic plate is hinged at its boundary. 
Here $\kappa$ is the curvature of the boundary $\partial D$ and $\nu$ the outward unit 
normal on $\partial D$. Finally, the electrostatic potential $\psi_{u(t)}=\psi_{u(t)}(x,z)$ is defined for 
$t>0$ and $(x,z) \in \Omega(u(t))$, where 
\begin{equation*}
	\Omega(u(t)):= \{ (x,z)\,: \,  x\in D, \, -1<z<u(t) \}
\end{equation*}
is the three-dimensional cylinder between the rigid ground plate at $z=-1$ and the deformed elastic plate at $z=u(t)$. For each $t>0$, $\psi_{u(t)}$ satisfies the rescaled Laplace equation 
\begin{equation}
	\varepsilon^2 \Delta' \psi_{u(t)} + \partial_z^2 \psi_{u(t)}  =0,   \quad (x,z) \in \Omega(u(t)), \; t>0,
	\label{evolution4}
\end{equation}
with nonhomogeneous Dirichlet boundary conditions 
\begin{equation}
	\psi_{u(t)} (x,z) = \frac{1+z}{1+u(t,x)}, \quad (x,z) \in \partial  \Omega(u(t)),\; t>0.
	\label{evolution5}
\end{equation}
Here $\nabla'$ and $\Delta'$ are respectively the gradient and Laplace operator with respect to $x\in D$ for functions of $x$ and $z$. Note that \eqref{evolution1} is a nonlocal semilinear fourth-order parabolic equation for the plate deformation $u$, which is coupled to the ellliptic equation \eqref{evolution4} in the free domain $\Omega(u(t))$ for the electrostatic potential $\psi_{u(t)}$.  
For more details on the derivation of the free boundary problem  \eqref{evolution1}-\eqref{evolution5}, see ~\cite[Section 2]{N20}. 

A crucial feature of \eqref{evolution1}-\eqref{evolution5} is that it is only meaningful as long as the deformation $u$ satisfies $u>-1$.  When $u$ attains the value  $-1$ somewhere in $D$ at some time $T_*>0$, i.e. when 
\begin{equation}
	\label{tp}
\lim_{t \rightarrow T_*} \min_{x \in D} u(t,x)=-1,
\end{equation}
this respresents a touchdown of the elastic plate on the ground plate. This physical phenomenon has 
been observed experimentally in MEMS devices for sufficiently large voltage values $\lambda$ and is often called pull-in instability \cite{MarquesCastelloShkel2005, PeleskoBernstein2002}. It is characterized by the existence of a threshold value $\lambda_*$ of $\lambda$, such that touchdown occurs in finite time for $\lambda > \lambda_*$ and never occurs for $0< \lambda < \lambda_*$. An interesting and important question is whether \eqref{evolution1}-\eqref{evolution5} exhibits such a touchdown behavior, i.e. whether \eqref{tp} takes place.  

Free boundary problems similar to \eqref{evolution1}-\eqref{evolution5} where the hinged boundary conditions \eqref{evolution2} are replaced by the clamped boundary conditions $u= \partial_{\nu} u=0$ on $\partial D$, have been investigated in ~\cite{LaurencotWalker2016, LaurencotWalker2018} and in ~\cite{LaurencotWalkerI,LaurencotWalkerESAIM, LaurencotWalkerwave} for the one-dimensional setting ~$D=(-1,1)$. The stationary version of \eqref{evolution1}-\eqref{evolution5} has already been studied in \cite{N20}. However, the parabolic problem \eqref{evolution1}-\eqref{evolution5} has not been discussed in the literature so far. In this paper we refine and extend some of the arguments used in  \cite{LaurencotWalkerI, LaurencotWalker2016, LaurencotWalker2018} to obtain results on solutions of \eqref{evolution1}-\eqref{evolution5} where we have to cope with the hinged boundary conditions \eqref{evolution2} and the three-dimensional geometry of ~$\Omega(u(t))$.

We now state our main findings on \eqref{evolution1}-\eqref{evolution5}. From now on, we assume that 
\begin{equation*}
	D \text{ is a bounded convex domain in } \mathbb{R}^2 \text{ with } C^4 \text{ boundary }
\end{equation*}
and that the parameters
\begin{equation*}
	\varepsilon >0, \quad \beta >0, \quad \tau \geq 0, \quad \sigma \in (-1,1)
\end{equation*}
are fixed. We will state additional assumptions on $D$ whenever needed. For further use, given $q\in [1,\infty]$, we define the Sobolev space $W^{s}_{q,\mathcal{B}}(D)$ 
incorporating the boundary conditions \eqref{evolution2}, if meaningful, by 
\begin{equation*}
	W^{s}_{q,\mathcal{B}}(D):= 
	\left\{\begin{array}{ll}
		\vspace{0.17cm}
		W^{s}_{q}(D),  &  s \in [0, \tfrac{1}{q}], 
		\\
		\vspace{0.17cm}
		\left\{ v\in  W^{s}_{q}(D)\, : \, v=0 \text{ on } \partial D \right\},
		& s \in \big(\tfrac{1}{q}, 2+\tfrac{1}{q}\big], 
		\\
		\left\{v\in  W^{s}_{q}(D)\, : \, v = \Delta v -(1 - \sigma) \kappa 
		\partial_{\nu}v=0 \text{ on } \partial D\right \},
		& s \in \big(2+\tfrac{1}{q}, 4\big].
	\end{array} \right.
\end{equation*} 

The following result shows that \eqref{evolution1}-\eqref{evolution5} is locally well-posed for any $\lambda$ and globally well-posed for small $\lambda$ provided that $u^0$ is small as well. 
 
\begin{theorem}
\label{wellposedness}\hspace{-0.1cm}\textnormal{(Well-posedness)} 
	Given $4\xi \in (\frac{7}{3},4)\backslash\{\frac{5}{2}\}$, consider an initial 
value  ~$u^{0}\in W^{4\xi}_{2,\mathcal{B}}(D)$ such that $u^{0}>-1$ in $D$. Then, the following hold. 

\begin{enumerate}
\item[\textit{(i)}]\textnormal{(Local existence)}  For each voltage value $\lambda>0$, there is a unique solution $(u,\psi_u)$ to 
\eqref{evolution1}-\eqref{evolution5} on the maximal interval of existence $[0,T_m)$ in the sense that
\begin{equation}
	\label{regularity.1}
	u \in C^1((0,T_m),L_2(D)) \cap  C((0,T_m),W^{4}_{2,\mathcal{B}}(D)) \cap C([0,T_m),W^{4\xi}_{2,\mathcal{B}}(D))
\end{equation}
satisfies \eqref{evolution1}-\eqref{evolution3} together with 
\begin{equation*}
	u(t,x) > -1, \quad (t,x) \in [0,T_m) \times D,
\end{equation*}
and 
\begin{equation}
	\label{regularity.2}
	\psi_{u(t)}\in W^2_2(\Omega(u(t)))
\end{equation}
solves \eqref{evolution4}-\eqref{evolution5} in $\Omega(u(t))$ for each $t \in [0,T_m)$.  In addition, if $\xi=1$, then 
$u \in  C^1([0,T_m),L_2(D)) \cap  C([0,T_m),W^{4}_{2,\mathcal{B}}(D))$. \\
		
\item[\textit{(ii)}]\textnormal{(Norm blow-up or touchdown)} If, for each $T>0$, there is $\rho(T) \in (0,1)$ such that 
\begin{equation*}
	\Vert u(t) \Vert_{W^{4\xi}_{2,\mathcal{B}}(D)} \leq \rho(T)^{-1} \; \text{ and } \; u(t) \geq -1+\rho(T)  \text{ in } D
\end{equation*}
for $t\in [0,T_m)\cap [0,T]$, then the solution exists globally in time, i.e. $T_m=\infty$. 
\\
		
\item[\textit{(iii)}]\textnormal{(Global existence)} Given $\rho \in \left(0,\frac{1}{2}\right)$, there are ~$\lambda_*:= \lambda_*(\rho, \varepsilon) > 0$ and ~$N:=N(\rho,\varepsilon) > 0$ such that $T_m = \infty$  provided that $\lambda \in (0,\lambda_* )$, $u^0 \geq -1+2 \rho$ in $D$, and 
\begin{equation*}
	\Vert u^0 \Vert _{W^{4\xi}_{2, \mathcal{B}}(D)} \leq N. 
\end{equation*}
In this case, $u \in L_{\infty}(0,\infty; W^{4\xi}_{2,\mathcal{B}}(D))$ with 
\begin{equation*}
	\inf_{(t,x)\in [0,\infty)\times D }u(t,x) >  -1.
\end{equation*}
\end{enumerate}
\end{theorem}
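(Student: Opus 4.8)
The plan is to recast the coupled free boundary problem \eqref{evolution1}--\eqref{evolution5} as an abstract semilinear Cauchy problem for $u$ alone, by first solving the elliptic problem \eqref{evolution4}--\eqref{evolution5} for $\psi_{u}$ as a function of the deformation $u$, and then applying the theory of maximal $L_p$-regularity / analytic semigroups for the fourth-order operator associated with the hinged boundary conditions \eqref{evolution2}. Concretely, for a fixed admissible profile $v>-1$ one transforms $\Omega(v)$ to the fixed cylinder $D\times(-1,0)$ via $(x,z)\mapsto(x,(1+z)/(1+v(x)))$; the Laplace equation \eqref{evolution4} becomes a uniformly elliptic equation on $D\times(0,1)$ with coefficients depending on $v,\nabla'v,\Delta'v$, and elliptic regularity gives $\psi_v\in W^2_2(\Omega(v))$ together with the key conclusion that the trace map
\[
g(v)(x):=\varepsilon^2\,\bigl|\nabla'\psi_v(x,v(x))\bigr|^2+\bigl(\partial_z\psi_v(x,v(x))\bigr)^2
\]
defines a real-analytic (or at least locally Lipschitz) map from a neighborhood of $u^0$ in $W^{4\xi}_{2,\mathcal B}(D)$ into $L_2(D)$, with estimates depending only on a lower bound $1+v\ge\rho$ and on $\|v\|_{W^{4\xi}_{2,\mathcal B}}$. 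This is where the $2$-dimensional geometry of $\Omega(v)$ and the precise choice $4\xi>\tfrac73$ enter: one needs $W^{4\xi}_2(D)\hookrightarrow$ a space controlling the coefficients well enough that the transformed elliptic problem has $W^2_2$ solutions and the nonlinear substitution operator lands in $L_2$. I expect this elliptic/substitution-operator analysis to be the main obstacle, exactly as in the clamped case treated in \cite{LaurencotWalker2016,LaurencotWalker2018}, since all the later steps are comparatively standard once $g$ has the right regularity.

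With the nonlinearity in hand, I would write the equation as $\partial_t u+A u=-\lambda g(u)$, $u(0)=u^0$, where $A:=\beta\Delta^2-\tau\Delta$ with domain $W^4_{2,\mathcal B}(D)$. The operator $-A$ generates an analytic semigroup on $L_2(D)$ and the interpolation spaces interpolating between $L_2(D)$ and $D(A)=W^4_{2,\mathcal B}(D)$ are the $W^{4\xi}_{2,\mathcal B}(D)$; the exclusion of $4\xi=\tfrac52$ is the familiar requirement $4\xi\ne 2+\tfrac1q$ with $q=2$ so that the boundary conditions defining the interpolation space are unambiguous. Local existence and uniqueness in the regularity class \eqref{regularity.1}, with the smoothing $u\in C((0,T_m),W^4_{2,\mathcal B})\cap C^1((0,T_m),L_2)$, then follow from the standard theory for semilinear parabolic equations with analytic generators (e.g. Amann's or Lunardi's results), applied on the open subset $\{v\in W^{4\xi}_{2,\mathcal B}(D):v>-1\}$ on which $g$ is defined; the positivity $u(t,x)>-1$ is built into the construction since one only solves on this open set, and if $\xi=1$ the initial datum already lies in $D(A)$, giving continuity up to $t=0$ in the stronger norms.

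For part (ii), the contrapositive of the standard maximal-existence alternative says that if $T_m<\infty$ then $u(t)$ must leave every compact subset of the open set $\{v>-1\}\subset W^{4\xi}_{2,\mathcal B}(D)$ as $t\to T_m$; that is, either $\|u(t)\|_{W^{4\xi}_{2,\mathcal B}(D)}\to\infty$ or $\min_D u(t)\to -1$. Hence the stated uniform bounds on $[0,T_m)\cap[0,T]$ force $T_m>T$ for every $T$, i.e. $T_m=\infty$. For part (iii), I would fix $\rho\in(0,\tfrac12)$ and run a continuity/bootstrap argument: using the variation-of-constants formula $u(t)=e^{-tA}u^0-\lambda\int_0^t e^{-(t-s)A}g(u(s))\,ds$ together with the analytic-semigroup smoothing estimates $\|e^{-tA}\|_{\mathcal L(L_2,W^{4\xi}_2)}\le C t^{-\xi}$ and the bound $\|g(v)\|_{L_2}\le C(\rho)$ valid whenever $1+v\ge\rho$ and $\|v\|_{W^{4\xi}_{2,\mathcal B}}\le 2N$, one shows that if $\|u^0\|_{W^{4\xi}_{2,\mathcal B}}\le N$, $u^0\ge -1+2\rho$, and $\lambda$ is small depending on $(\rho,\varepsilon,N)$, then the set of times at which $\|u(t)\|_{W^{4\xi}_{2,\mathcal B}}\le 2N$ and $u(t)\ge -1+\rho$ is both open and closed in $[0,T_m)$, hence all of it; part (ii) then upgrades this to $T_m=\infty$, and the same a priori bounds give $u\in L_\infty(0,\infty;W^{4\xi}_{2,\mathcal B}(D))$ with $\inf u>-1$. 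One subtlety in \emph{(iii)} is that the $L_2$-bound on $g(u)$ alone only controls $\|u(t)\|_{W^{4\xi}_{2,\mathcal B}}$ up to a constant times $t^{-\xi}$ near $t=0$, which is not integrable if $\xi\ge 1$; this is handled by splitting the Duhamel integral and using an interpolation inequality to trade a small power of the higher norm against the $L_2$-norm, closing the estimate for $\lambda$ small — and here one may also need $u^0\in W^{4\xi}_{2,\mathcal B}$ with $4\xi$ slightly above $\tfrac73$ rather than exactly, consistent with the hypothesis.
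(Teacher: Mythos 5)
Your overall roadmap (reduce to a semilinear Cauchy problem $\partial_t u+Au=-\lambda g(u)$ via the transformed elliptic problem, then fixed point with analytic semigroup techniques on the interpolation scale $W^{4\xi}_{2,\mathcal{B}}(D)$) matches the paper's Section~3. But there are two genuine gaps in your proposal, the second of which would actually make your argument for part~(iii) fail.

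First, you take for granted that $-A=-\beta\Delta^2+\tau\Delta$ with domain $W^4_{2,\mathcal{B}}(D)$ generates an analytic semigroup on $L_2(D)$. This is not automatic for the \emph{hinged} boundary conditions \eqref{evolution2}; the paper's Lemma~\ref{aSemigroup} devotes a nontrivial computation to verifying that $(A,\mathcal{B})$ is normally elliptic, i.e.\ checking the Lopatinskii--Shapiro complementing condition for the boundary operators $\mathcal{B}_1 v=\mathrm{tr}\,v$, $\mathcal{B}_2 v=\mathrm{tr}\,\Delta v-(1-\sigma)\kappa\,\mathrm{tr}\,\partial_\nu v$. This is exactly the point at which the hinged case differs from the clamped case you reference, so "by standard theory" is not enough here.

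Second, and more seriously, the smoothing estimate you quote for part~(iii), namely $\Vert e^{-tA}\Vert_{\mathcal{L}(L_2,W^{4\xi}_2)}\leq Ct^{-\xi}$ \emph{without} exponential decay, cannot close a global-in-time a priori bound: since $4\xi\in(\tfrac73,4)$ gives $\xi\in(0,1)$, the Duhamel term
\begin{equation*}
\lambda\int_0^t\Vert e^{-(t-s)A}\Vert_{\mathcal{L}(L_2,W^{4\xi}_2)}\,\Vert g(u(s))\Vert_{L_2}\,\rd s\lesssim\lambda\,C(\rho)\,t^{1-\xi}
\end{equation*}
grows unboundedly as $t\to\infty$, so no choice of small $\lambda$ stabilizes your open/closed set argument on all of $[0,\infty)$. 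Your remark about non-integrability at $t=0$ when $\xi\geq1$ is misdirected (here $\xi<1$ always, so the singularity at $t=0$ is harmless); the real issue is the behavior at $t\to\infty$. The paper resolves this via Lemma~\ref{spectrumSG}: one shows $\sigma(-A)\subset\{\mathrm{Re}\,z<0\}$ (using Green's formula, the identity of \cite{SweersVassi2018} for the hinged boundary term $\kappa\vert\partial_\nu\varphi\vert^2$, and $\sigma\in(-1,1)$), which upgrades the smoothing estimate to $\Vert e^{-tA}\Vert_{\mathcal{L}(L_2,W^{4\xi}_{2,\mathcal{B}})}\leq M e^{-\varpi t}t^{-\xi}$ with $\varpi>0$. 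With the exponential weight the Duhamel term is uniformly bounded and the smallness argument for $\lambda$ (a contraction for every $T$, following \cite{EscherLaurencotWalker2014}, rather than your continuity method) goes through. You should add the spectral lemma and the resulting exponential decay; otherwise parts (i)--(ii) and the structure are fine and agree with the paper.
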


The proof of Theorem \ref{wellposedness} is performed in Section \ref{wp}. We first transform the Laplace equation \eqref{evolution4}-\eqref{evolution5} to a fixed cylinder which leads to an 
elliptic boundary value problem with nonconstant coefficients depending on $u$, $\nabla u$ and $\Delta u$. Solving this transformed problem for a given $u$ enables us to formulate the full free boundary problem as a nonlocal semilinear evolution equation for $u$. We then employ semigroup theory and a fixed point argument to solve this evolution equation. 

Note that part (iii) of Theorem \ref{wellposedness} provides uniform estimates on $u$ in the $W^{4\xi}_{2,\mathcal{B}}$-norm and implies that touchdown of $u$ on $-1$ does not even take place in infinite time. Also note that part (ii) of Theorem \ref{wellposedness} ensures that, if the maximal existence time $T_m$ is finite, then
\begin{equation*}
	\limsup_{t \rightarrow T_m} \: \Vert u(t)\Vert_{
		W^{4\xi}_{2,\mathcal{B}}(D)}=\infty 
	\;  
	\text{ or }\; 
	\liminf_{t \rightarrow T_m} \: 
	\min_{ x \in \overline{D}} u(t,x)=-1.
\end{equation*}
From a physical viewpoint, this outcome is not yet satisfactory since it does not imply that a finite time singularity is only due to the touchdown phenomenon \eqref{tp} described above. \\
\\
The next result improves part (ii) of Theorem \ref{wellposedness} showing that, in fact, only the touchdown phenomenon \eqref{tp} can generate a finite time singularity. 

\begin{theorem}\hspace{-0.1cm}\textnormal{(Touchdown)}
\label{globalcriterion}
Let $D$ be a bounded convex domain in $ \mathbb{R}^2$ with $C^{4,\gamma}$  boundary for 
some $\gamma \in (0,1)$. Under the assumptions of Theorem \ref{wellposedness}, let $(u,\psi_u)$ be the unique solution to \eqref{evolution1}-\eqref{evolution5} on  the maximal interval of existence $[0,T_m)$.  
Assume that there are $T_0>0$ and $\rho_0\in (0,1)$ such that 
\begin{equation}
	\label{singul1}
	u(t)\geq -1+\rho_0 \; \text{ in } \; D, \; t \in [0,T_m)\cap [0,T_0].
\end{equation}
Then $T_m \geq T_0$. 

Moreover, if, for each $T>0$, there is $\rho(T) \in (0,1)$ such that 
\begin{equation*}
	u(t)\geq -1+\rho(T) \; \text{ in } \; D, \; t \in [0,T_m)\cap [0,T],
\end{equation*}
then $T_m=\infty$.
\end{theorem}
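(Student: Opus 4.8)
The plan is to upgrade the crude blow-up alternative of Theorem \ref{wellposedness}(ii) by showing that, as long as $u$ stays bounded away from $-1$, the $W^{4\xi}_{2,\mathcal{B}}(D)$-norm of $u$ cannot blow up in finite time; thus the first alternative in the dichotomy is automatically excluded and only touchdown remains. Concretely, under assumption \eqref{singul1} I would fix $T<\min\{T_m,T_0\}$ and derive a priori estimates on $[0,T]$ that depend only on $\rho_0$, $T_0$, $\lambda$ and the initial data, and then invoke Theorem \ref{wellposedness}(ii) with $\rho(T)$ built from these estimates together with $\rho_0$.

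The key steps, in order: (1) Using $u(t)\geq -1+\rho_0$, analyze the transformed Laplace problem on the fixed cylinder (the elliptic problem with $u$-dependent coefficients referenced after Theorem \ref{wellposedness}) and establish elliptic estimates showing that the nonlinear right-hand side of \eqref{evolution1}, viewed as a map $u\mapsto g(u)$, is bounded in a suitable space by a constant depending on $\rho_0$ and on lower-order norms of $u$; here the $C^{4,\gamma}$-regularity of $\partial D$ is what buys the extra Hölder/Schauder regularity needed to control $\psi_u$ and hence the trace $(\varepsilon^2|\nabla'\psi_u|^2+(\partial_z\psi_u)^2)$ on $\{z=u\}$ uniformly. (2) Test the parabolic equation \eqref{evolution1} against $u$, $\Delta^2 u$, or $\partial_t u$ (standard energy estimates for the hinged fourth-order operator, whose associated bilinear form is coercive on $W^2_{2,\mathcal{B}}(D)$ for $\sigma\in(-1,1)$ since $D$ is convex), to obtain an $L_\infty(0,T;W^2_{2,\mathcal{B}})$ bound and then, by bootstrapping through the analytic semigroup generated by $\beta\Delta^2-\tau\Delta$ with hinged boundary conditions, a bound in $C([0,T],W^{4\xi}_{2,\mathcal{B}}(D))$ — the point being that the forcing $g(u)$ has already been controlled in step (1). (3) Combine: on $[0,T_m)\cap[0,T]$ we have $u\geq -1+\rho_0$ and $\|u(t)\|_{W^{4\xi}_{2,\mathcal{B}}}\leq C(\rho_0,T_0,\lambda,u^0)=:\rho(T)^{-1}$, so Theorem \ref{wellposedness}(ii) forces $T_m=\infty$, in particular $T_m\geq T_0$. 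The ``Moreover'' statement is then immediate: if the lower bound persists on every $[0,T]$, the same argument gives global existence.

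The main obstacle I expect is step (1): getting a priori control on $\psi_u$ and its gradient on the moving upper boundary that is \emph{quantitative} in $\rho_0$ and involves only norms of $u$ that the parabolic energy estimates can close on — i.e.\ avoiding a circular dependence where the elliptic estimate needs $\|u\|_{W^{4\xi}}$ to bound $g(u)$, which is exactly the quantity we are trying to bound. The resolution is the usual one for this class of problems: the coefficients of the transformed elliptic equation depend on $u$ through $\nabla u$ and $\Delta u$, i.e.\ only up to $W^2_{2}$-type norms, so one first obtains from the $W^2_{2,\mathcal{B}}$ energy bound (cheap to get) an estimate on $\psi_u$ sufficient to bound $g(u)$ in $L_2(D)$ (or a slightly better space), which feeds back into the semigroup estimate to raise the regularity of $u$, and one iterates this bootstrap a finite number of times up to $W^{4\xi}_{2,\mathcal{B}}$. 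Care is also needed at the excluded exponent $4\xi=\tfrac{5}{2}$ and with the time-weighted norms near $t=0$ coming from the non-optimal initial regularity, but these are handled exactly as in the proof of Theorem \ref{wellposedness} and in \cite{LaurencotWalker2016,LaurencotWalker2018}.
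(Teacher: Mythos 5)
Your high-level strategy is right: derive a priori estimates on $\|u(t)\|_{W^{4\xi}_{2,\mathcal{B}}(D)}$ under assumption \eqref{singul1} and then invoke Theorem~\ref{wellposedness}(ii). You also correctly identify the potential circularity (the elliptic estimates of Proposition~\ref{ellipticp} need control of $u$ in $W^2_q$ to bound $g(u)$), and you correctly expect the resolution to exploit that the coefficients of $\mathcal{L}_v$ depend on $v$ only up to second-order derivatives. However, the machinery you propose to close the loop is not the one the paper uses, and as stated it has a genuine gap.

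The paper's resolution is \emph{not} an iterated bootstrap, and it does not rest on Schauder estimates for $\psi_u$. The crucial point you miss is that the energy method only yields $u \in L_\infty(0,T_0; W^2_2(D))$ (Lemma~\ref{lemmC}, the energy equality of Theorem~\ref{Tenergyequality}, and Corollary~\ref{coroE}), which is \emph{below} the threshold $W^2_q$, $q\geq 3$, needed by Proposition~\ref{ellipticp} to bound $g(u)$ in $L_2(D)$. What the $W^2_2$ bound does give, via \cite[Corollary~3.5]{LaurencotWalker2018}, is an $L_1(D)$ bound on $G(u(t))$ — estimate \eqref{coroA} controls $\|G(u(t))\|_{L_1}$ by $\|\nabla u(t)\|_{L_2}^2$ only. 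The paper then passes to the \emph{negative-order Besov space} $B^\alpha_{1,1}(D)$ with $\alpha<0$, uses Guidetti's theorem to show (Lemma~\ref{lemmG}) that $-A$ generates an analytic semigroup there with the parabolic smoothing estimate $\|e^{-tA}\|_{\mathcal{L}(B^\alpha_{1,1},B^{4\theta+\alpha}_{1,1,\mathcal{B}})}\leq c\,t^{-\theta}$, and chooses $\theta<1$ and $\alpha<0$ so that $B^{4\theta+\alpha}_{1,1,\mathcal{B}}(D)\hookrightarrow W^{4\xi}_{2,\mathcal{B}}(D)$. Since $L_1(D)\hookrightarrow B^\alpha_{1,1}(D)$, the variation-of-constants formula then yields the $W^{4\xi}_{2,\mathcal{B}}$ bound in a \emph{single} pass, with the integrable singularity $t^{-\theta}$ absorbed by $\theta<1$. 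This is also where the $C^{4,\gamma}$ boundary regularity is actually used — to make the Besov-space generation result (Guidetti) and the interpolation identity $\bigl(B^\alpha_{1,1},B^{4+\alpha}_{1,1,\mathcal{B}}\bigr)_{\theta,1}\doteq B^{4\theta+\alpha}_{1,1,\mathcal{B}}$ available — not, as you suggest, to gain Hölder/Schauder control of $\psi_u$ on the moving boundary (the elliptic theory there is unchanged from Section~\ref{aux}).

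Without the negative Besov ingredient, your bootstrap sketch does not obviously close: from $u\in L_\infty W^2_2$ you cannot conclude $g(u)\in L_\infty L_2$ from Proposition~\ref{ellipticp}, so the $L_2$-based semigroup estimate of Lemma~\ref{regularizingsemigroup} does not apply to the Duhamel integral. You would have to reprove the elliptic estimates to extract a weaker-than-$L_2$ bound on $g(u)$ from $W^2_2$ control, and then find a semigroup framework compatible with that space — which is precisely what the $L_1\hookrightarrow B^\alpha_{1,1}$ observation supplies. Finally, note that the $W^2_2$ bound itself is not a simple test-against-$\Delta^2 u$ argument (the nonlinearity is not yet under control at that stage); it comes from the energy equality \eqref{energyequality} of Theorem~\ref{Tenergyequality} combined with the lower bound on $E$ in terms of $E_m$ (Lemma~\ref{lemmD}), after first obtaining the $L_2$-bound on $u$ by testing against $u$ and using only the $L_1$-bound \eqref{coroA} on $G$.
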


We point out that it remains an open problem whether a finite time singularity occurs when $\lambda$ is large enough. 
The proof of Theorem \ref{globalcriterion} is given in Section \ref{rc} and relies on semigroup theory in (negative) Besov spaces and on the gradient flow structure of  \eqref{evolution1}-\eqref{evolution5}, the latter being inherent in the model derivation (see \cite{N20}). 
Indeed, introducing the rescaled total energy of the device
\begin{equation*}
	E(u):= E_m(u) - \lambda E_e (u)
\end{equation*}
involving the mechanical energy 
\begin{equation}
\label{energym}
E_m(u):= \beta \int_D \bigg( \frac{1}{2} (\Delta u)^2 +
(1-\sigma) \big( (\partial_{x_2}\partial_{x_1} u)^2 - \partial_{x_1}^2 u \, \partial_{x_2}^2 u\big) \bigg) \, \rd x
+ \frac{\tau}{2} \int_D \vert \nabla u \vert^2 \, \rd x
\end{equation}
and the electrostatic energy 
\begin{equation}
\label{energye}
E_e (u) := \int_{\Omega (u)} \Big( \varepsilon^2 \vert \nabla'\psi_u \vert^2 +  (\partial_z \psi_u)^2 \Big) \, \rd(x,z), 
\end{equation}
the following energy equality holds. 

\begin{theorem} \hspace{-0.1cm}\textnormal{(Energy equality)}
	\label{Tenergyequality}
	Under the assumptions of Theorem \ref{wellposedness},
	\begin{equation}
		\label{energyequality}
		E(u(t)) + \int_0^t \Vert \partial_t u(s)\Vert_{L_2(D)}^2 \, \rd s = E(u^0)
	\end{equation}
for $t \in [0,T_m)$.
\end{theorem}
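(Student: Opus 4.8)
The plan is to derive the energy equality \eqref{energyequality} by testing the evolution equation \eqref{evolution1} with $\partial_t u$ and integrating over $D$. The right-hand side is $-\lambda g(u)$ where $g(u) := \varepsilon^2 |\nabla'\psi_u(\cdot,u)|^2 + (\partial_z\psi_u(\cdot,u))^2$ is the trace of the electrostatic field on the moving boundary; the key point is recognizing that $\int_D g(u)\,\partial_t u\,\rd x = \frac{\rd}{\rd t} E_e(u)$. First I would use the regularity \eqref{regularity.1}--\eqref{regularity.2} from Theorem~\ref{wellposedness}: on $(0,T_m)$ one has $u\in C^1((0,T_m),L_2(D))\cap C((0,T_m),W^4_{2,\mathcal B}(D))$, so $\partial_t u(t)\in L_2(D)$ and all spatial derivatives up to order four are available, making the formal computation below rigorous on any compact subinterval of $(0,T_m)$, and then passing to $t\to 0$ using the continuity $u\in C([0,T_m),W^{4\xi}_{2,\mathcal B}(D))$ with $4\xi>7/3$.

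For the mechanical part, multiplying $\beta\Delta^2 u - \tau\Delta u$ by $\partial_t u$ and integrating by parts twice, the boundary terms organize — using the hinged conditions $u = \Delta u - (1-\sigma)\kappa\partial_\nu u = 0$ on $\partial D$ and hence $\partial_t u = 0$, $\partial_t\partial_\nu u$ related to $\partial_t\Delta u$ on $\partial D$ — into exactly $\frac{\rd}{\rd t}E_m(u)$, where the torsion term $(1-\sigma)\big((\partial_{x_1}\partial_{x_2}u)^2 - \partial_{x_1}^2 u\,\partial_{x_2}^2 u\big)$ accounts for the boundary curvature contribution. This is the standard computation showing that $\beta\Delta^2 - \tau\Delta$ under hinged boundary conditions is the $L_2$-gradient of $E_m$; I would invoke the variational identity for $E_m$ (its first variation in direction $v$ vanishing on $\partial D$ equals $\int_D(\beta\Delta^2 u - \tau\Delta u)v\,\rd x$ with no leftover boundary terms precisely because of \eqref{evolution2}), which is presumably established in \cite{N20} or follows from a direct integration by parts.

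The main obstacle is the electrostatic term: showing $\int_D g(u)\,\partial_t u\,\rd x = \frac{\rd}{\rd t}E_e(u)$. Here $E_e(u) = \int_{\Omega(u)}\big(\varepsilon^2|\nabla'\psi_u|^2 + (\partial_z\psi_u)^2\big)\,\rd(x,z)$, a functional of $u$ through both the domain $\Omega(u)$ and the potential $\psi_u$ solving \eqref{evolution4}--\eqref{evolution5}. Differentiating in $t$ produces (a) a boundary term from the moving domain, which by the Reynolds transport theorem gives $\int_D g(u)\,\partial_t u\,\rd x$ since $\psi_u = 1$ on $z = u$ forces the integrand's trace there to equal $g(u)$, and (b) an interior term $2\int_{\Omega(u)}\big(\varepsilon^2\nabla'\psi_u\cdot\nabla'\partial_t\psi_u + \partial_z\psi_u\,\partial_z\partial_t\psi_u\big)\,\rd(x,z)$; integrating this by parts and using that $\psi_u$ is harmonic (in the $\varepsilon$-weighted sense \eqref{evolution4}) kills the interior contribution, leaving a boundary term that cancels against part of (a) — the net effect being precisely $\int_D g(u)\,\partial_t u\,\rd x$, as one expects from the general principle that the electrostatic energy at equilibrium potential has shape derivative given by the square of the normal field (a Hadamard-type formula). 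Making this rigorous requires transferring everything to the fixed cylinder $D\times(-1,0)$ via the transformation already introduced in Section~\ref{wp}, controlling the $t$-regularity of the transformed potential (which depends smoothly on $u$, $\nabla u$, $\Delta u$ by the elliptic theory used there), and justifying the differentiation under the integral sign; the convexity and $C^4$-regularity of $D$ together with $u(t)>-1$ guarantee the $W^2_2$-regularity \eqref{regularity.2} needed for the trace terms to make sense. Once both pieces are assembled, integrating the resulting identity $\frac{\rd}{\rd t}E(u(t)) = -\|\partial_t u(t)\|_{L_2(D)}^2$ from $0$ to $t$ yields \eqref{energyequality}.
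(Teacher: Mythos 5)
Your plan correctly identifies the target identity $\frac{\rd}{\rd t}E(u(t))=-\|\partial_t u(t)\|_{L_2(D)}^2$ and the two pieces one must differentiate, and your heuristic description of the electrostatic shape derivative (moving-boundary term via transport plus an interior term killed by harmonicity) captures the content of the paper's Proposition~\ref{electrostaticE}. However, there is a genuine gap in the step where you assert that the regularity \eqref{regularity.1}--\eqref{regularity.2} makes ``the formal computation\dots rigorous on any compact subinterval of $(0,T_m)$.'' It does not, and the paper says so explicitly: the time regularity from Theorem~\ref{wellposedness}(i) is \emph{not} sufficient for a direct computation of $\frac{\rd}{\rd t}E(u(t))$.

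The difficulty is concrete. To differentiate $t\mapsto E_e(u(t))$ one must differentiate the transformed potential $\phi_{u(t)}$ in $t$, which (see the proof of Proposition~\ref{electrostaticE}) requires the coefficients of the operator $\mathcal{L}_{u(t)}$ --- built from $u(t)$, $\nabla u(t)$, $\Delta u(t)$ --- to be $C^1$ in $t$ with values in suitable spaces. Proposition~\ref{electrostaticE} therefore assumes $v\in C^1([0,T],W^{4\xi}_{2,\mathcal{B}}(D))$. But the solution $u$ of \eqref{evolution1}--\eqref{evolution5} only satisfies $u\in C^1((0,T_m),L_2(D))\cap C((0,T_m),W^4_{2,\mathcal{B}}(D))$; knowing $u(t)\in W^4_{2,\mathcal{B}}(D)$ for each fixed $t$ and $\partial_t u(t)\in L_2(D)$ does \emph{not} yield differentiability of $t\mapsto u(t)$ in any norm stronger than $L_2(D)$, so none of $\nabla u$, $\Delta u$, or the transformed coefficients are known to be time-differentiable. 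Your appeal to the Reynolds transport theorem and Hadamard shape-derivative formula has the same problem: those require the free boundary $z=u(t,x)$ to move $C^1$-smoothly in a topology controlling $\nabla u$, which is exactly what is unavailable. The missing idea --- and the structural core of the paper's proof --- is an approximation argument: replace $u$ by its Steklov average $u_\delta(t)=\frac{1}{\delta}\int_t^{t+\delta}u(s)\,\rd s$, which \emph{does} belong to $C^1([0,T],W^{4\xi}_{2,\mathcal{B}}(D))$, apply Proposition~\ref{electrostaticE} and the analogous integration-by-parts computation for $E_m$ to $u_\delta$, and pass to the limit $\delta\to0$ using the convergence of $u_\delta$, $\partial_t u_\delta$, $\phi_{u_\delta}$, and $g(u_\delta)$ in the appropriate spaces. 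Without this (or some equivalent regularization) the differentiation of $E(u(t))$ is not justified, so the proposal as written does not constitute a proof. (A minor slip: the fixed cylinder used in the paper is $\Omega=D\times(0,1)$ under $T_v$, not $D\times(-1,0)$.)
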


The proof of Theorem ~\ref{Tenergyequality} is given in Section \ref{ee}. 
One of the difficulties in the proof of Theorem ~\ref{Tenergyequality} is the computation of 
the derivative $\frac{\rd}{\rd t} E_e(u(t))$, 
which is due to the fact that the underlying domain $\Omega(u(t))$ varies with respect to $u(t)$. 
Another difficulty is that the time regularity of $u$ as stated in part (i) of Theorem \ref{wellposedness} is not sufficient for a direct computation of the 
derivative $\frac{\rd}{\rd t} E(u(t))$ 
and an approximation argument has to be used.

\section{Auxiliary results}
\label{aux}
We shall first 
derive properties of solutions to the Laplace equation \eqref{evolution4}-\eqref{evolution5} in dependence of a given deformation $u(t): \overline{D} \rightarrow (-1,\infty)$ for a fixed time $t$. In order to do so, we transform  \eqref{evolution4}-\eqref{evolution5} to the fixed cylinder $\Omega:=D\times (0,1)$. 

More precisely, given  ~$q \in [3,\infty]$ and an arbitrary function $v\in W^2_{q,\mathcal{B}}(D)$ taking values in  ~$(-1,\infty)$, we define the diffeomorphism 
$T_v:\overline{\Omega(v)}\rightarrow \overline{\Omega}$ by 
\begin{equation}
		\label{diffeom}
	T_v(x,z):=\left(x,\frac{1+z}{1+v(x)}\right),\quad (x,z)\in \overline{\Omega(v)}.
\end{equation}
The inverse of $T_v$ is given by 
\begin{equation*}
	T_v^{-1}(x,\eta)=\left(x,(1+v(x))\eta -1\right), \quad (x,\eta) \in \overline{\Omega},
\end{equation*}
and the Laplace operator $\varepsilon^2 \Delta' + \partial_z^2$ is transformed to the $v$-dependent differential operator 
\begin{align}
	\label{opL}
	\mathcal{L}_{v} w &:= \varepsilon^{2} \Delta' w  -  2 \varepsilon^{2} \eta  
	\frac{\nabla v(x)}{1+v(x)}\cdot \nabla' \partial_{\eta} w
	+  \frac{1+\varepsilon^{2} \eta^2 
		\vert \nabla v(x)\vert^2}{(1+v(x))^2} \partial_{\eta}^2 w 
	\nonumber \\
	&\quad + \varepsilon^{2} \eta \left( 2 
	\frac{\vert \nabla v(x)\vert^2}{(1+v(x))^2} 
	- \frac{\Delta v(x)}{1+v(x)}\right) \partial_{\eta} w.
\end{align}
Setting $\phi_v:= \psi_v \circ T_v^{-1}$,  \eqref{evolution4}-\eqref{evolution5} is  equivalent to 
\begin{empheq}{align}
	( \mathcal{L}_{v} \phi_v)(x,\eta) &=0,  \qquad (x,\eta) \in \Omega,
	\label{ellsub1} \\
	\phi_v(x,\eta)  &= \eta,  \qquad (x,\eta) \in \partial  \Omega.
	\label{ellsub2}
\end{empheq}
Note that \eqref{ellsub1}-\eqref{ellsub2} is an elliptic boundary value problem with nonconstant coefficients, but in the fixed domain $\Omega$. 
We further define, for $\rho \in (0,1)$, the open subset 
\begin{equation}
	\label{solutionset}
	S_q (\rho):= \left\{ v \in  W^{2}_{q,\mathcal{B}}(D)\,: \, 	\Vert v \Vert_{W^{2}_{q}(D)} < \frac{1}{\rho} \;\text{ and }\;
	v(x) > -1 + \rho \; \text{ for }\, x \in D \right\} 
\end{equation}
of $W^{2}_{q,\mathcal{B}}(D)$ with closure 
\begin{equation*}
	\overline{S}_q (\rho)=\left\{ v \in  W^{2}_{q,\mathcal{B}}(D)\,: \, \Vert v \Vert_{W^{2}_{q}(D)} 
	\leq  \frac{1}{\rho}  \;\text{ and }\; v(x)
	\geq  -1 + \rho  \; \text{ for }\, x \in D \right\} .
\end{equation*}
The following proposition provides 
some important properties with respect to $v$ of the solution $\phi_v$ to  \eqref{ellsub1}-\eqref{ellsub2}. 

\begin{proposition}
	\label{ellipticp}
For each $v\in \overline{S}_q (\rho)$ there is a unique solution $\phi_v \in W^2_2(\Omega)$ to \eqref{ellsub1}-\eqref{ellsub2} 
and 
\begin{equation*}
	\Vert \phi_v \Vert_{W^2_2(\Omega)} \leq c_0, \quad v \in  \overline{S}_q (\rho),
\end{equation*}
with 
\begin{equation*}
	\Vert \phi_{v_1} -   \phi_{v_2} \Vert_{W^2_2(\Omega)} \leq c_0 \Vert v_1 - v_2 \Vert_{W^2_q(D)} , \quad v_1,v_2 \in  \overline{S}_q (\rho),
\end{equation*}
for some positive constant $c_0$ depending only $q$, $\rho$, $\varepsilon$, and $D$. Furthermore, the mapping 
\begin{equation*}
	g: S_q(\rho) \rightarrow L_2(D), \quad v \mapsto \frac{1 + \varepsilon^{2}  
		\vert \nabla v\vert^2}{(1+v)^2}  \left( \partial_{\eta} \phi_v(\cdot,1)
	\right)^2
\end{equation*}
is analytic, bounded, and globally Lipschitz continuous. 
\end{proposition}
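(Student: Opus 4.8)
The plan is to establish the three assertions of Proposition~\ref{ellipticp} in turn, treating the elliptic theory for $\mathcal{L}_v$ as the backbone and then bootstrapping to regularity and analyticity of $g$. First I would fix $v\in\overline{S}_q(\rho)$ with $q\ge 3$ and observe that, by Sobolev embedding, $W^2_q(D)\hookrightarrow C^1(\overline{D})$, so $v$, $\nabla v$ are bounded and $1+v\ge\rho$; hence the coefficients of $\mathcal{L}_v$ displayed in \eqref{opL} are bounded in $L_q(\Omega)$ (the top-order coefficients in $C(\overline{\Omega})$, the zeroth-order-in-$\partial_\eta$ coefficient involving $\Delta v$ only in $L_q(D)$), and $\mathcal{L}_v$ is uniformly elliptic with ellipticity constant controlled by $\rho$ and $\varepsilon$. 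Writing $\phi_v=\eta+w$ reduces \eqref{ellsub1}-\eqref{ellsub2} to $\mathcal{L}_v w=-\mathcal{L}_v\eta=-\varepsilon^2\eta\big(2|\nabla v|^2/(1+v)^2-\Delta v/(1+v)\big)=:f_v\in L_2(\Omega)$ with $w\in W^2_{2,\partial\Omega}(\Omega)$, i.e. homogeneous Dirichlet data. Existence and uniqueness of $w\in H^1_0(\Omega)$ follows from Lax--Milgram applied to the (not necessarily symmetric) bilinear form associated with $\mathcal{L}_v$ after checking coercivity via the ellipticity bound and a Poincaré inequality; since $\Omega=D\times(0,1)$ is convex with $C^4$-in-$x$ boundary and the leading coefficients are Lipschitz, elliptic regularity upgrades $w$ to $W^2_2(\Omega)$ with $\|w\|_{W^2_2(\Omega)}\le c\|f_v\|_{L_2(\Omega)}\le c_0(q,\rho,\varepsilon,D)$, giving the uniform bound on $\|\phi_v\|_{W^2_2(\Omega)}$.

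Next, for the Lipschitz estimate I would subtract the equations for $v_1,v_2$: setting $w_i=\phi_{v_i}-\eta$, the difference $w_1-w_2$ solves $\mathcal{L}_{v_1}(w_1-w_2)=(\mathcal{L}_{v_2}-\mathcal{L}_{v_1})w_2+(f_{v_1}-f_{v_2})$ with zero boundary data. The right-hand side is estimated in $L_2(\Omega)$ by noting that the map $v\mapsto(\text{coefficients of }\mathcal{L}_v)$ is (locally) Lipschitz from $W^2_q(D)$ into $L_q(\Omega)$ — each coefficient is a rational function of $v,\nabla v$ (or linear in $\Delta v$) with denominators bounded below by powers of $\rho$ on $\overline{S}_q(\rho)$ — and that $w_2\in W^2_2(\Omega)$ with controlled norm, so products of an $L_q$-coefficient difference with a $W^2_2$-function land in $L_2$ by Hölder with the appropriate exponent (here $q\ge 3>2$ in two space dimensions, which is exactly why the hypothesis $q\in[3,\infty]$ appears). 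Applying the uniform $W^2_2$-a priori estimate for $\mathcal{L}_{v_1}$ then yields $\|\phi_{v_1}-\phi_{v_2}\|_{W^2_2(\Omega)}\le c_0\|v_1-v_2\|_{W^2_q(D)}$.

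For the final claim about $g$, I would first note that the trace operator $W^2_2(\Omega)\to W^{1/2}_2(D)$, $\phi\mapsto\partial_\eta\phi(\cdot,1)$ is bounded, and $W^{1/2}_2(D)\hookrightarrow L_4(D)$ in two dimensions, so $(\partial_\eta\phi_v(\cdot,1))^2\in L_2(D)$ with norm controlled by $\|\phi_v\|_{W^2_2(\Omega)}^2$; multiplying by the factor $(1+\varepsilon^2|\nabla v|^2)/(1+v)^2\in C(\overline{D})$ keeps us in $L_2(D)$, giving boundedness of $g$. Local Lipschitz continuity follows by combining the Lipschitz estimate for $v\mapsto\phi_v$ just proved, the elementary inequality $\|a^2-b^2\|\le(\|a\|+\|b\|)\|a-b\|$ applied to the traces (in $L_4$, then squared into $L_2$), and the Lipschitz dependence of the multiplier on $v$ in $C(\overline{D})$, again using $q\ge 3$. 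Analyticity I would obtain via the analytic implicit function theorem: the map $(v,\phi)\mapsto\mathcal{L}_v\phi$ from $S_q(\rho)\times W^2_{2,\mathrm{bc}}(\Omega)$ into $L_2(\Omega)$ is analytic (polynomial/rational in $v$ and its derivatives, linear in $\phi$), its partial Fréchet derivative in $\phi$ is the isomorphism $\mathcal{L}_v:W^2_{2,\mathrm{bc}}(\Omega)\to L_2(\Omega)$ established above, so $v\mapsto\phi_v$ is analytic $S_q(\rho)\to W^2_2(\Omega)$; composing with the bounded linear trace map, the bilinear squaring map $L_4\times L_4\to L_2$, and the analytic multiplication by the rational function of $v$ preserves analyticity.

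The main obstacle I expect is the regularity/integrability bookkeeping in the two-dimensional setting: the operator $\mathcal{L}_v$ has coefficients involving $\Delta v$, which for $v\in W^2_q(D)$ lies only in $L_q(D)$ rather than $C(\overline{D})$, so one must verify carefully that the associated zeroth-order term $c_v\,\partial_\eta w$ with $c_v\in L_q(\Omega)$ still permits the $W^2_2$-a priori estimate (this needs $q$ large enough relative to the dimension — the threshold $q\ge 3$ in $\R^2$ — so that $c_v\,\partial_\eta w\in L_2(\Omega)$ for $w\in W^2_2$, via $\partial_\eta w\in W^1_2(\Omega)\hookrightarrow L_s(\Omega)$ for all $s<\infty$), and likewise that the difference estimates close. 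Handling the non-divergence-form, variable-coefficient elliptic estimate up to the boundary on the cylinder $\Omega$ — whose lateral boundary is only as smooth as $\partial D$ and whose top/bottom faces meet it at edges — is the technical heart; one typically localizes, flattens $\partial D$, and invokes interior and boundary $W^2_2$-estimates for operators with continuous leading coefficients plus lower-order terms in $L_q$, the convexity of $D$ (hence of $\Omega$) being what rescues the estimate near the edges.
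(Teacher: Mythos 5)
Your proposal reproves the result from scratch, whereas the paper disposes of it in one line: it cites \cite[Proposition 2.1 \& Eq.\ (2.26)]{LaurencotWalker2016} and merely observes that the proof there uses only the first boundary condition $v=0$ on $\partial D$, so nothing changes when clamped conditions are replaced by hinged ones. This is the essential insight that a blind reader would have to supply, and you did not make it explicit — but your outline otherwise reconstructs exactly the argument of the cited reference: reduce $\phi_v=\eta+w$ to $\mathcal{L}_v w=f_v$ with homogeneous Dirichlet data, get $W^2_2$ bounds with constants controlled by $\rho$, obtain the Lipschitz estimate by subtracting the two equations, and deduce analyticity of $v\mapsto\phi_v$ from the analytic implicit function theorem before composing with the trace and with multiplication by the rational prefactor. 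The threshold $q\ge 3$, the role of convexity of $D$ (hence of $\Omega$) in the $W^2_2$ estimate near the edges, and the trace into $L_4(D)$ are all handled correctly.

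One step does need repair. You invoke Lax--Milgram for ``the (not necessarily symmetric) bilinear form associated with $\mathcal{L}_v$'' and assert coercivity ``via the ellipticity bound and a Poincar\'e inequality''. The naive form
$B(w,\chi)=\int_\Omega(\bm{\alpha}\nabla w)\cdot\nabla\chi\,\rd(x,\eta)-\int_\Omega(\bm{b}\cdot\nabla w)\chi\,\rd(x,\eta)$
is not obviously coercive on $H^1_0(\Omega)$: the drift $\bm{b}$ has $L_\infty$-size of order $1/\rho$, and integrating the drift term by parts trades it for $\tfrac12\int_\Omega(\mathrm{div}\,\bm{b})\,w^2$ whose sign is not controlled, so the Poincar\'e inequality alone cannot absorb it uniformly in $v\in\overline S_q(\rho)$. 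The standard fix, and the one underlying the cited proof, is to test against $(1+v)\chi$ rather than $\chi$: the resulting weighted form is exactly the pull-back under $T_v$ of the Dirichlet energy $\int_{\Omega(v)}\big(\varepsilon^2|\nabla'\cdot|^2+|\partial_z\cdot|^2\big)$, hence symmetric and equivalent to the $H^1_0(\Omega)$-norm with constants depending only on $\rho$ and $\varepsilon$. (Alternatively: $\mathcal{L}_v$ has no zeroth-order term, so the weak maximum principle gives uniqueness, and Fredholm plus a priori estimates give existence.) With $H^1_0$-well-posedness secured in this way, the rest of your bookkeeping is sound.
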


\begin{proof}
This follows from \cite[Proposition 2.1 \& Equation (2.26)]{LaurencotWalker2016}, because the proof 
therein only uses the boundary condition $v=0$ on $\partial D$. 
\end{proof}

For a time-dependent function $u(t) \in \overline{S}_q (\rho)$, Proposition \ref{ellipticp} and the just introduced notation 
reveal that the potential $\psi_{u(t)}$ belongs to $W^2_2(\Omega(u(t)))$ and solves  \eqref{evolution4}-\eqref{evolution5}. Concerning the right-hand side of equation \eqref{evolution1}, we have the relation
\begin{align*}
	- \lambda  \big(
	\varepsilon^2 
	&\vert \nabla'\psi_{u(t)}(\cdot, u(t))\vert^2 +  
	(\partial_z \psi_{u(t)}(\cdot,u(t)))^2 \big)
	\nonumber\\
	&\qquad = 	- \lambda   \frac{1+ \varepsilon^2 \vert \nabla u(t) \vert^2}{(1+u(t))^2} 
	(\partial_{\eta} \phi_{u(t)}(\cdot,1))^2 = - \lambda g(u(t)),
\end{align*}
where we used that $\nabla' \phi_{u(t)} (x,1) = (0,0)$ for $x \in D$ due to $\phi_{u(t)}(x,1)= 1$ by \eqref{ellsub2}. This puts us in a position to formulate \eqref{evolution1}-\eqref{evolution5} as a single nonlocal evolution 
equation only involving the deformation $u$, see \eqref{CauchyProblem} below. \\
\\
We next show a poposition which will be used in the proof of Theorem \ref{Tenergyequality}.

\begin{proposition}
	\label{electrostaticE}
	Let $T>0$, $4\xi \in (\frac{7}{3},4)\backslash\{\frac{5}{2}\}$, 
	and let 
	\begin{equation*}
		v \in C^1([0,T],W^{4\xi}_{2,\mathcal{B}}(D))
	\end{equation*}
	be such that $v(t,x) > -1$ for $(t,x)\in [0,T]\times \overline{D}$. 
	Then
	\begin{equation}
		\label{electrostaticE1}
		E_e(v(t_2)) - E_e(v(t_1)) 
		= - \int_{t_1}^{t_2} \int_D 
		g(v(s)) \partial_t v(s) \, \rd x \rd s
	\end{equation}
	for $ 0\leq t_1\leq t_2 \leq T$. 
\end{proposition}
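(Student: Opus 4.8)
The plan is to compute $\frac{\rd}{\rd t} E_e(v(t))$ directly, treating the $v$-dependence of the domain $\Omega(v(t))$ by pulling everything back to the fixed cylinder $\Omega = D \times (0,1)$ via the diffeomorphism $T_{v(t)}$ from \eqref{diffeom}, and then integrating in time. First I would rewrite $E_e(v(t))$ as an integral over $\Omega$ in terms of $\phi_{v(t)} = \psi_{v(t)} \circ T_{v(t)}^{-1}$: using the change of variables $(x,z) = T_{v(t)}^{-1}(x,\eta)$ one has $\rd(x,z) = (1+v(t,x))\,\rd(x,\eta)$, and $\nabla' \psi_{v(t)}$, $\partial_z \psi_{v(t)}$ transform into combinations of $\nabla' \phi_{v(t)}$, $\partial_\eta \phi_{v(t)}$ with coefficients built from $v(t)$, $\nabla v(t)$. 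This produces a representation
\begin{equation*}
E_e(v(t)) = \int_\Omega a[v(t)](x,\eta) \, \rd(x,\eta),
\end{equation*}
where the integrand is an explicit quadratic form in $(\nabla' \phi_{v(t)}, \partial_\eta \phi_{v(t)})$ with coefficients depending (rationally and polynomially, hence smoothly away from $v=-1$) on $v(t)(x)$ and $\nabla v(t)(x)$. The key point is that the domain of integration is now fixed, so differentiation in $t$ can be carried out under the integral sign.

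Next I would justify differentiating under the integral and compute $\frac{\rd}{\rd t} E_e(v(t))$. The regularity $v \in C^1([0,T], W^{4\xi}_{2,\mathcal{B}}(D))$ with $4\xi > \tfrac{7}{3} > 2 + \tfrac{2}{2}$ gives, by Sobolev embedding, $v(t) \to v(t_0)$ and $\partial_t v(t) \to \partial_t v(t_0)$ in $C^1(\overline{D})$; combined with the Lipschitz and analyticity statements of Proposition \ref{ellipticp} (applied in $\overline{S}_q(\rho)$ for a suitable $q \in [3,\infty]$ and $\rho$ depending on $T$ and the lower bound on $v$), this yields that $t \mapsto \phi_{v(t)}$ is $C^1$ into $W^2_2(\Omega)$ and that $t \mapsto a[v(t)]$ is $C^1$ into $L_1(\Omega)$. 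Then
\begin{equation*}
\frac{\rd}{\rd t} E_e(v(t)) = \int_\Omega \Big( \partial_v a[v(t)] \, \partial_t v(t) + \partial_{\nabla v} a[v(t)] \cdot \nabla \partial_t v(t) + D_\phi a[v(t)] \big[\partial_t \phi_{v(t)}\big] \Big) \rd(x,\eta).
\end{equation*}
For the last term I would use that $\phi_{v(t)}$ solves the weak form of \eqref{ellsub1}, i.e. is the critical point of the Dirichlet-type functional $w \mapsto \int_\Omega a[v(t)]$ among $w$ with $w = \eta$ on $\partial\Omega$; since $\partial_t \phi_{v(t)}$ vanishes on $\partial\Omega$ (because $\phi_{v(t)}(x,\eta) = \eta$ on $\partial\Omega$ is independent of $t$), the Euler--Lagrange identity forces $D_\phi a[v(t)][\partial_t \phi_{v(t)}] = 0$ after integration. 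This is the standard mechanism by which the shape derivative of an energy at a state that is stationary in the interior variable reduces to the explicit boundary/coefficient contributions. What remains, $\int_\Omega (\partial_v a \, \partial_t v + \partial_{\nabla v} a \cdot \nabla \partial_t v)$, must then be simplified: integrating by parts in $x$ to move the derivative off $\nabla \partial_t v$, using that $\partial_t v = 0$ on $\partial D$ (from the boundary condition in $W^{4\xi}_{2,\mathcal{B}}(D)$), and invoking \eqref{ellsub1} once more, it should collapse — as in the model derivation in \cite{N20} — to the boundary term coming from $\eta = 1$, namely $-\int_D \frac{1+\varepsilon^2 |\nabla v(t)|^2}{(1+v(t))^2}(\partial_\eta \phi_{v(t)}(\cdot,1))^2 \, \partial_t v(t)\, \rd x = -\int_D g(v(t)) \partial_t v(t)\, \rd x$.

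Finally, having shown $t \mapsto E_e(v(t))$ is $C^1$ on $[0,T]$ with $\frac{\rd}{\rd t} E_e(v(t)) = -\int_D g(v(t)) \partial_t v(t)\,\rd x$, I would integrate over $[t_1, t_2]$ to obtain \eqref{electrostaticE1}; the right-hand side is a well-defined (continuous) function of $s$ by Proposition \ref{ellipticp}. The main obstacle I anticipate is the middle step: rigorously justifying differentiation under the integral sign for the $\phi$-dependent part and, above all, carrying out the integration by parts that identifies the explicit coefficient terms with exactly $-g(v(t))\partial_t v(t)$ — this requires enough regularity of $\phi_{v(t)}$ (only $W^2_2(\Omega)$ is asserted) to make the boundary integrals and the use of \eqref{ellsub1} legitimate, so one likely needs either an additional elliptic-regularity bootstrap for $\phi_{v(t)}$ (using the $C^{1,\alpha}$ regularity of the coefficients, which follows from $v \in C^1(\overline D)$ and the $C^{4,\gamma}$ assumption on $\partial D$ in the cases it is invoked) or an approximation argument with smooth $v$'s, passing to the limit using the Lipschitz estimate of Proposition \ref{ellipticp}.
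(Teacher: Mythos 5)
Your plan is essentially the one the paper follows: pull back $E_e$ to the fixed cylinder $\Omega$ via $T_{v(t)}$, differentiate under the integral, eliminate the $\partial_t\phi_{v(t)}$ contribution using the PDE, and reduce the remaining explicit $v$-dependence to the trace at $\eta=1$. The one presentational difference is in how the $\partial_t\phi$ term is killed: you invoke the critical-point (Euler--Lagrange) structure of the transformed Dirichlet functional, whereas the paper multiplies the equation $\mathcal{L}_{v(t)}\phi(t)=0$ by $(1+v(t))\partial_t\phi(t)$, integrates over $\Omega$, and uses the divergence form of $\mathcal{L}_{v(t)}$ together with $\partial_t\phi(t)=0$ on $\partial\Omega$ --- this is exactly the test-function realisation of the same variational identity, so the two are interchangeable. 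Two smaller remarks. First, to get $t\mapsto\phi_{v(t)}\in C^1([0,T],W^2_2(\Omega))$ you cannot quite rely on the Lipschitz estimate of Proposition \ref{ellipticp} alone (Lipschitz in $v$ only gives Lipschitz in $t$, and the analyticity statement there concerns $g$, not $\phi_v$); the paper fills this cleanly by writing $\Phi(t)=\mathcal{A}(t)^{-1}f(t)$ with $\Phi=\phi-\eta$ and showing $\mathcal{A}\in C^1([0,T],\mathcal{L}(W^2_{2,\mathcal{B}}(\Omega),L_2(\Omega)))$ and $f\in C^1([0,T],L_2(\Omega))$, so that $C^1$-differentiability of $\mathcal{A}(t)^{-1}$ follows from the smoothness of inversion in the operator algebra. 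Second, your anticipated obstacle is not real: no elliptic-regularity bootstrap beyond $W^2_2(\Omega)$ and no approximation with smooth $v$ are needed. The paper's reduction works at the $W^2_2$ level --- one transforms back to $(x,z)$, applies Green's formula and the Laplace equation $\varepsilon^2\Delta'\psi+\partial_z^2\psi=0$, and finally uses the identity $\nabla'\psi(t,x,v)=-\partial_z\psi(t,x,v)\nabla v$ obtained by differentiating the boundary condition; the trace $\partial_\eta\phi(\cdot,1)$ makes sense already in $W^{1/2}_2(D)$. So your proposal is sound, with the final algebraic collapse to $-\int_D g(v(t))\partial_t v(t)\,\rd x$ being a computation rather than a conceptual gap, and the regularity concern being unnecessary.
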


When $D=(-1,1)$ is one-dimensional and clamped boundary conditions $v(t,\pm 1) 
= \partial_x v(t,\pm 1) =0$ are considered, this proposition has been proved in 
\cite[Proposition 2.2]{LaurencotWalkerI}. Our proof follows a similar spirit: 
We first rewrite the electrostatic energy $E_e(v(t))$ as an integral over the 
fixed domain $\Omega$. The resulting electrostatic energy is thus expressed 
in terms of $\phi_{v(t)}$. We next verify the differentiability of $\phi_v$ with respect to $t$. Using this we then show that the transformed electrostatic energy is differentiable with respect to $t$ and compute its derivative. Finally we transform the obtained derivative back to the original coordinates to get 
\eqref{electrostaticE1}. 

\begin{proof}
	Since $W^{4\xi}_2(D)$ embeds continuously in $W^2_3(D)$ and in $C(\overline{D})$, there 
	is $\rho \in (0,1)$ so that $v(t) \in \overline{S}_3 (\rho)$ for all $t \in [0,T]$, and 
	hence Proposition \ref{ellipticp} can be applied. To simplify notation let, for each 
	$t\in [0,T]$, $\phi(t)=\phi_{v(t)} \in W^2_2(\Omega)$ denote the solution to \eqref{ellsub1}-\eqref{ellsub2}
	associated to $v(t)$ and $\psi(t)=\psi_{v(t)} \in W^2_2(\Omega(v(t)))$ denote the corresponding 
	solution to \eqref{evolution4}-\eqref{evolution5} also associated to $v(t)$. 

	For $(t,x,\eta) \in [0,T]\times \Omega$, we set
	\begin{equation}
		\label{electrostatic_prope1}
		\Phi(t,x,\eta):= \phi(t,x,\eta) - \eta, \quad V(t,x):= 
		\frac{\nabla v(t,x)}{1+v(t,x)}\,,
	\end{equation}
and denote the components of $V$ by $V_i$, $i=1,2$.  
Recalling that the electrostatic energy $E_e$ is defined in \eqref{energye} and that $\psi(t)=\phi(t)\circ T_{v(t)}$ with the transformation $T_{v(t)}$ as in \eqref{diffeom}, 
we obtain, by the change of variables $(x,z)\rightarrow (x,\eta)$, 
\begin{equation}
	E_e(v(t))
	= \varepsilon^2 \int_{\Omega} 
	\vert \nabla' \phi(t) - \eta  V(t) \partial_{\eta} \phi(t)  \vert^2 \, (1+v(t))\, \rd(x,\eta) 
	+ \int_{\Omega} \frac{( \partial_{\eta} \phi(t))^2}{1+v(t)}\, \rd(x,\eta).
	\label{electrostatictr}
\end{equation}
Next, setting 
\begin{equation*}
	W^2_{2,\mathcal{B}}(\Omega):=\left\{ w \in W^2_2(\Omega)\,: \, w =0 \; \text{ on } \, \partial \Omega\right\},
\end{equation*}
we have by \eqref{electrostatic_prope1} that, for $t\in [0,T]$, the function $\Phi(t)\in W^2_{2,\mathcal{B}}(\Omega)$ solves  
\begin{empheq}{align*}
	-\mathcal{L}_{v(t)} \Phi(t) &=f(t)  \qquad  \text{ in } \Omega,
	 \\
	\Phi(t) &= 0  \qquad  \text{ on } \partial  \Omega
\end{empheq}
with 
\begin{equation*}
	f(t,x,\eta) := \varepsilon^2 \eta \, \big( \vert V(t,x)\vert^2
	- \text{div }V(t,x) \big), \quad (t,x,\eta) \in [0,T]\times\Omega,
\end{equation*}
satisfying $f(t) \in L_2(\Omega)$. For later use, we write  the operator $\mathcal{L}_{v(t)}$ in divergence form
\begin{equation*}
	\mathcal{L}_{v(t)} w= \text{div}(\bm{\alpha}(t)\nabla w ) 
	+\bm{b}(t) \cdot \nabla w
\end{equation*}
with
\begin{equation*}
	\bm{\alpha}(t):=
	\begin{pmatrix}
		\alpha_{1}(t) & 0 & \frac{\alpha_{2}(t)}{2} \\[0.09cm]
		0 & \alpha_{1}(t) &\frac{\alpha_{3}(t)}{2}  \\[0.09cm]
		\frac{\alpha_{2}(t)}{2}  & \frac{\alpha_{3}(t)}{2}   & \alpha_{4}(t)
	\end{pmatrix}
\end{equation*}
and $\bm{b}(t) := (b_1(t), b_2(t),b_3(t))$, where
\begin{align*}
	&\alpha_{1}(t,x,\eta):= \varepsilon^2, 
	\quad  \alpha_{2}(t,x,\eta):= - 2\varepsilon^2 \eta V_1(t,x), 
	\quad \alpha_{3}(t,x,\eta):=- 2\varepsilon^2 \eta V_2(t,x),
	\nonumber \\[0.05cm]
	&\alpha_{4}(t,x,\eta):= \frac{1}{(1+v(t,x))^2}+ \varepsilon^2 
	\eta^2  \vert V(t,x)\vert^2,
	\quad b_1(t,x,\eta):=\varepsilon^2 V_1(t,x),
	\nonumber \\[0.05cm]
	&b_2(t,x,\eta):=\varepsilon^2 V_2(t,x), 
	\quad b_3(t,x,\eta):= -\varepsilon^2 \eta  \vert V(t,x)\vert^2
\end{align*}
for $(t,x,\eta)\in [0,T]\times\Omega$. We now prove the differentiability of $\Phi$ with respect to $t$ and start by 
briefly recalling some properties of $\mathcal{L}_{v(t)}$; for further details and
proofs see ~\cite[Section 2]{LaurencotWalker2016}. Let us define a bounded linear operator $\mathcal{A}(t) \in 
\mathcal{L}(W^2_{2,\mathcal{B}}(\Omega), L_2(\Omega))$  by 
\begin{equation*}
	\mathcal{A}(t) w:= -\mathcal{L}_{v(t)} w, \quad w\in W^2_{2,\mathcal{B}}(\Omega), t \in [0,T].
\end{equation*}
For each $t \in [0,T]$, it ifollows from \cite[Proposition 2.7]{LaurencotWalker2016} that $\mathcal{A}(t)$ is invertible and 
$\Phi(t)=\mathcal{A}(t)^{-1} f(t)$. Using the time regularity of $v$ and the continuous embeddings 
\begin{equation*}
	W^{4\xi}_2(D) \hookrightarrow W^2_3(D) \hookrightarrow C^1(\overline{D})
\end{equation*}
we obtain by direct computation that $\alpha_{2}$, $\alpha_{3}$, $\alpha_{4}$, and $b_{3}$ belong to 
$ C^1([0,T],L_{\infty}(\Omega))$ and that $\Delta v/ (1+v)$ belongs to $C^1([0,T],L_3(D))$, so that 
\begin{equation}
	\label{electrostatic_regu1}
	\mathcal{A}\in C^1([0,T],\mathcal{L}(W^2_{2,\mathcal{B}}(\Omega), L_2(\Omega))) \; \text{ and } \;  
	f\in C^1([0,T], L_2(\Omega)).
\end{equation}
Since the map taking an invertible operator to its inverse is continuously differentiable on the 
space of bounded operators, the mapping 
\begin{equation*}
	[t\mapsto \mathcal{A}(t)^{-1}]:[0,T]
	\rightarrow \mathcal{L}(L_2(\Omega),W^2_{2,\mathcal{B}}(\Omega))
\end{equation*} 
is continuously differentiable and hence $\Vert \mathcal{A}(t)^{-1} 
\Vert_{\mathcal{L}(L_2(\Omega), W^2_{2,\mathcal{B}}(\Omega))}\leq  \text{const}$. This, together with \eqref{electrostatic_regu1}, implies that
\begin{equation*}
	\label{electrostatic_regu3}
	\Phi \in C^1([0,T],W^2_{2,\mathcal{B}}(\Omega))
\end{equation*}
with derivative 
\begin{equation*}
	\partial_t \Phi(t)= \mathcal{A}(t)^{-1} \big( \partial_t f(t) - \partial_t 
	\mathcal{A}(t) \, \Phi(t) \big)\in W^2_{2,\mathcal{B}}(\Omega), 
	\quad t\in [0,T].
\end{equation*}
Therefore, in view of \eqref{electrostatic_prope1}, we get 
\begin{equation}
	\label{electrostatic_prope2}
	\phi \in C^1([0,T],W^2_2(\Omega))  \; \text{ with } \;   \partial_t \phi(t) 
	= \partial_t \Phi(t), \; t \in [0,T].
\end{equation}
We next consider the transformed electrostatic energy  \eqref{electrostatictr}. By direct calculations, we deduce from 
the time regularity of $\phi$ and $v$ that 
$E_e(v) \in C^1([0,T])$ with derivative 
\begin{align}
	\label{electrostatic_prope3}
	\frac{\rd}{\rd t} E_e(v(t))
	& = 2\varepsilon^2 \int_{\Omega} \Big[ 
	(\nabla' \phi (t) - \eta V(t)\partial_{\eta}\phi(t))\cdot
	(\nabla' \partial_t \phi(t) - \eta \partial_t V(t) \partial_{\eta} \phi(t) 
	\nonumber \\[-0.1cm]
	&\hspace{6.2cm}- \eta V(t) \partial_{\eta}\partial_t \phi(t))\Big] (1+v(t))\, 
	\rd(x,\eta) 
	\nonumber \\
	& \quad +  \varepsilon^2 \int_{\Omega} \vert \nabla' \phi(t)- \eta V(t)
	\partial_{\eta}\phi(t)\vert^2 \,\partial_t v(t)\, \rd(x,\eta)
	\nonumber\\
	& \quad + 2 \int_{\Omega} \frac{\partial_{\eta} \phi(t) \partial_{\eta}\partial_t \phi(t)}
	{1+v(t)}\, \rd(x,\eta) 
	- \int_{\Omega} (\partial_{\eta} \phi(t))^2  \frac{ \partial_t v(t)}{
		(1+v(t))^2} \, \rd(x,\eta) 
\end{align}
for $t \in [0,T] $. Now we write the right-hand side of \eqref{electrostatic_prope3} in a simpler form, and to do this  
we multiply the equation $\mathcal{L}_{v(t)}\phi(t)=0$ by 
$(1+v(t))\partial_t \phi (t)$ and integrate over $\Omega$ to obtain  
\begin{equation*}
	\label{electrostatic_prope4}
	0 = \int_{\Omega} (1+v(t))\partial_t \phi (t) \mathcal{L}_{v(t)} \phi(t)\, \rd(x,\eta)
\end{equation*}
for $t \in [0,T] $. Using the divergence form of $\mathcal{L}_{v(t)}$,
Gauss' theorem, and the fact that $\partial_t \phi (t) =0$ on $\partial \Omega$, we get 
\begin{align*}
	\label{electrostatic_prope5}
	0 &= - \int_{\Omega} \nabla\big( (1+v(t))  \partial_t \phi(t)\big) \cdot 
	(\bm{\alpha}(t)\nabla \phi(t))  \,\rd(x,\eta) 
	\nonumber \\
	& \quad+ \int_{\Omega} (1+v(t))\partial_t \phi (t) 
	(\bm{b}(t) \cdot \nabla \phi(t))\, \rd(x,\eta), \quad t \in [0,T].
\end{align*}
We further deduce from the definitions of $\alpha_i(t)$, $b_i(t)$, and $V(t)$ that 
\begin{align*}
	0 &= - \varepsilon^2 \int_{\Omega} \big(\partial_t \phi (t)  \nabla v(t) + 
	(1+v(t)) \nabla'\partial_t \phi (t)\big) \cdot \big(\nabla' \phi(t) 
	- \eta V(t) \partial_{\eta} \phi (t) \big) \, \rd(x,\eta)
	\\
	& \quad  + \varepsilon^2 \int_{\Omega} \eta  (1+v(t)) V(t)
	\partial_{\eta}\partial_t \phi (t) \cdot \big( \nabla' \phi(t) 
	- \eta V(t)  \partial_{\eta} \phi (t) \big) \, \rd(x,\eta)
	\\
	& \quad - \int_{\Omega} \frac{\partial_{\eta}\partial_t \phi(t) \partial_{\eta} \phi(t)}
	{1+v(t)}\, \rd(x,\eta) 
	\\
	& \quad + \varepsilon^2 \int_{\Omega} \partial_t \phi (t) \nabla v(t)
	\cdot \big( \nabla' \phi(t) - \eta V(t) \partial_{\eta} \phi (t)  \big)\, 
	\rd(x,\eta)
\end{align*}
and rearranging gives 
\begin{align*}
	0 & = - \varepsilon^2 \int_{\Omega} (1+v(t)) \big( \nabla' \phi(t) - \eta V(t)
	\partial_{\eta} \phi (t) \big) \cdot \big( \nabla' \partial_t \phi(t) 
	- \eta V(t) \partial_{\eta} \partial_t \phi (t) \big)\, \rd(x,\eta)
	\\
	& \quad - \int_{\Omega} \cfrac{\partial_{\eta}\partial_t \phi(t) \partial_{\eta} \phi(t)}
	{1+v(t)}\, \rd(x,\eta) .
\end{align*}
Combining this with \eqref{electrostatic_prope3} yields
\begin{align*}
	\frac{\rd}{\rd t} E_e(v(t))
	&= -2 \varepsilon^2 \int_{\Omega} \big( \nabla' \phi(t) 
	- \eta V(t) \partial_{\eta} \phi (t) \big) \cdot \partial_t V(t) 
	\partial_{\eta} \phi (t) \eta (1+v(t))\, \rd(x,\eta)
	\\
	& \quad + \varepsilon^2 \int_{\Omega} \vert \nabla' \phi(t) 
	- \eta V(t) \partial_{\eta} \phi (t) \vert^2  \, \partial_t v(t) \, \rd(x,\eta) 
	\\
	&\quad 
	- \int_{\Omega} ( \partial_{\eta} \phi (t))^2 \frac{\partial_t v(t)}
	{(1+v(t))^2}\, \rd(x,\eta) 
\end{align*}
for $t \in [0,T]$.  Returning back to the original variables $(x,z)$ and potential $\psi(t)$, and using the 
identity 
\begin{equation*}
	\partial_t V(t) 
	= \partial_t \Big(\nabla \ln{(1+v(t))}\Big)
	= \nabla \left( \frac{\partial_t v(t)}{1+v(t)} \right), \quad t \in [0,T],
\end{equation*}
we get, from Green's formula and from the fact that $\partial_t v(t)=0$ on $\partial D$, 
\begin{align*}
	\label{electrostatic_prope7}
	\frac{\rd}{\rd t} E_e(v(t))
	&= -2 \varepsilon^2 \int_{\Omega(v(t))} (1+z) \partial_z \psi (t) \nabla' 
	\psi (t)\cdot \partial_t V(t)\, \rd(x,z)
	\nonumber \\
	& \quad +  \int_{\Omega(v(t))} \Bigl( \varepsilon^2 
	\vert \nabla' \psi(t) \vert^2- (\partial_z \psi (t))^2 
	\Bigr)  \frac{\partial_t v(t)}{1+v(t)}\, \rd(x,z)
	\nonumber \\
	&= 2 \varepsilon^2 \int_{\Omega(v(t))} (1+z) 
	\Big( \partial_z \psi(t) \Delta' \psi(t) + 
	\nabla' \psi(t) \cdot \nabla'\partial_z \psi(t)\Big)
	 \frac{\partial_t v(t)}{1+v(t)}\, \rd(x,z)
	 \nonumber \\
	 & \quad + 2 \varepsilon^2 \int_D 
	 \partial_z \psi(t,\cdot,v(t))  \nabla'\psi(t,\cdot,v(t)) \cdot
	 \nabla v(t) \partial_t v(t) \, \rd x
	 	\nonumber \\
	 & \quad +  \int_{\Omega(v(t))} \Big( \varepsilon^2 
	 \vert \nabla' \psi(t) \vert^2- (\partial_z \psi (t))^2 
	 \Big)  \frac{\partial_t v(t)}{1+v(t)}\, \rd(x,z)
,\quad t \in [0,T].
\end{align*}
We next use the Laplace equation $\varepsilon^2 \Delta' \psi(t) +\partial_z^2 \psi(t) = 0$
and again Green's formula to find 
\begin{align*}
	\frac{\rd}{\rd t} E_e(v(t))
	&= \int_{\Omega(v(t))} (1+z) \Big( - \partial_z\big( (\partial_z \psi(t))^2 \big) 
	+ \varepsilon^2 \,\partial_z \big( \vert \nabla' \psi(t)\vert^2 \big) 
	\Big) \frac{\partial_t v(t)}{1+v(t)}\,  \rd(x,z)
	\nonumber \\
	& \quad + 2  \varepsilon^2 \int_D 
	\partial_z \psi(t,\cdot,v(t))  \nabla'\psi(t,\cdot,v(t)) \cdot
	\nabla v(t) \partial_t v(t)  \, \rd x
	\nonumber\\
	& \quad +   \int_{\Omega(v(t))} \Bigl( \varepsilon^2 
	\vert \nabla' \psi(t) \vert^2-  (\partial_z \psi (t))^2 
	\Bigr)  \frac{\partial_t v(t)}{1+v(t)}\, \rd(x,z)
	\nonumber\\
	& \quad  = \int_D \Big( \varepsilon^2 \vert \nabla' \psi(t,\cdot,
	v(t))\vert^2- ( \partial_z \psi(t,\cdot,v(t)))^2 \Big)
	\partial_t v(t) \, \rd x
	\nonumber \\
	& \quad + 2  \varepsilon^2 \int_D  
	\partial_z \psi(t,\cdot,v(t))  \nabla'\psi(t,\cdot,v(t)) \cdot
	\nabla v(t)\partial_t v(t) \, \rd x ,\quad t \in [0,T].
\end{align*}
Using the identity 
\begin{equation}
	\label{kidentity}
	\nabla' \psi(t,x,v(t))=-\partial_{z}\psi(t,x,v(t))  \nabla v(t),
	\quad (t,x)\in [0,T]\times D,
\end{equation}
which follows from differentiating the boundary condition $\psi(t,x,v(t))=1$, we infer 
\begin{equation*}
\frac{\rd}{\rd t} E_e(v(t)) = - \int_D ( 1 + \varepsilon^2 
	\vert \nabla v(t) \vert^2) (\partial_z \psi(t,\cdot,
	v(t)))^2 \partial_t v(t)\, \rd x,\quad t \in  [0,T].
\end{equation*}
Integrating this over $[t_1,t_2]$ we obtain
\begin{equation*}
	E_e(v(t_2)) - E_e(v(t_1)) 
	=- \int_{t_1}^{t_2} \int_D g(v(s))\partial_t v(s)\, \rd x \rd s, \quad 0\leq t_1\leq t_2 \leq T, 
\end{equation*}
and the proposition is proved.
\end{proof}

Note that the proof of Proposition \ref{electrostaticE} uses only the first boundary condition of $W^{4\xi}_{2,\mathcal{B}}(D)$. 

\begin{remark}
An alternative approach to compute the derivative ~$\frac{\rd}{\rd t} E_e(v(t))$ and to investigate differentiability properties of $E_e$ is presented in \cite[Section 4]{LaurencotWalker2019}. This approach is based on a transformation that maps $\Omega(w(t))$ onto $\Omega(v(t))$ instead of the transformation $T_{v(t)}$ to a fixed cylinder. 
\end{remark}

\section{Well-Posedness: Proof of Theorem \ref{wellposedness}}
\label{wp}


Thanks to Proposition \ref{ellipticp} we may rewrite \eqref{evolution1}-\eqref{evolution5} as a semilinear evolution equation 
\begin{equation}
\label{CauchyProblem}
\left. \begin{array}{rl}
\partial_t u + A u \hspace{-0.2cm}&= -\lambda  g(u), \quad  t>0,
		\\[0.1cm]
u(0) \hspace{-0.2cm}&=u^0,
	\end{array}\right.
\end{equation}
only involving $u$, where the operator $A\in \mathcal{L}(W^4_{2,\mathcal{B}}(D),L_2(D))$ is 
defined by 
\begin{equation*}
	Av:=(\beta \Delta^2 - \tau \Delta)v, \quad v
	\in \text{dom}(A)=W^4_{2,\mathcal{B}}(D) .
\end{equation*}
Note that the hinged boundary conditions \eqref{evolution2} are incorporated in the domain of $A$. 
Once we solve \eqref{CauchyProblem} for $u$ we find the solution $\psi_{u(t)}$ to 
\eqref{evolution4}-\eqref{evolution5} via Proposition \ref{ellipticp}. 

First, we show the following property of $A$. 

\begin{lemma}
	\label{aSemigroup}
	It holds that 
	\begin{equation*}
		A \in \mathcal{H}(W^4_{2,\mathcal{B}}(D),L_2(D)),
	\end{equation*}
	i.e. $-A$ is the generator of a strongly continuous analytic semigroup  ~$\{e^{-tA} \, : \, t\geq 0\}$ on $L_2(D)$.
\end{lemma}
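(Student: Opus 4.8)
The plan is to realise $A$ as the self-adjoint operator on $L_2(D)$ canonically associated with the quadratic form of the mechanical energy $E_m$, and then to use the classical fact that a self-adjoint, bounded-below operator is the negative generator of a strongly continuous analytic semigroup. To this end, set $V:=W^2_{2,\mathcal B}(D)=\{v\in W^2_2(D):v=0\text{ on }\partial D\}$ and consider the symmetric bilinear form $\mathfrak a$ on $V\times V$ defined by
\begin{equation*}
\mathfrak a(u,v):=\beta\int_D\Big(\Delta u\,\Delta v+(1-\sigma)\big(2\,\partial_{x_1}\partial_{x_2}u\,\partial_{x_1}\partial_{x_2}v-\partial_{x_1}^2u\,\partial_{x_2}^2v-\partial_{x_2}^2u\,\partial_{x_1}^2v\big)\Big)\rd x+\tau\int_D\nabla u\cdot\nabla v\,\rd x,
\end{equation*}
so that $\mathfrak a(v,v)=2E_m(v)$, all terms of $E_m$ being quadratic in $v$. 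The form $\mathfrak a$ is clearly bounded and symmetric on $V\times V$, and $V$ is densely embedded in $L_2(D)$.

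First I would establish coercivity. Writing $a=\partial_{x_1}^2v$, $b=\partial_{x_2}^2v$, $c=\partial_{x_1}\partial_{x_2}v$, the bending–torsion integrand in $\mathfrak a(v,v)$ equals $a^2+b^2+2\sigma ab+2(1-\sigma)c^2$, and since $|\sigma|<1$ one has $a^2+b^2+2\sigma ab=(1-|\sigma|)(a^2+b^2)+|\sigma|\,(a+\mathrm{sgn}(\sigma)\,b)^2\ge(1-|\sigma|)(a^2+b^2)$ and $2(1-\sigma)c^2\ge 2(1-|\sigma|)c^2$, so this integrand is bounded below pointwise by $(1-|\sigma|)|D^2v|^2$. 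With $\beta>0$ this gives $\mathfrak a(v,v)\ge\beta(1-|\sigma|)\|D^2v\|_{L_2(D)}^2$; together with the elementary bound $\|v\|_{W^2_2(D)}\le C\|D^2v\|_{L_2(D)}$ for $v\in V$ (from Poincar\'e's inequality and $\|\nabla v\|_{L_2(D)}^2=-\int_D v\,\Delta v\,\rd x\le\|v\|_{L_2(D)}\|\Delta v\|_{L_2(D)}$) it follows that $\mathfrak a$ is coercive on $V$; note no convexity of $D$ is needed here. Thus $\mathfrak a$ is a densely defined, closed, symmetric, coercive form on $L_2(D)$, so by Kato's representation theorem it is induced by a strictly positive self-adjoint operator $A_0$ on $L_2(D)$ with form domain $V$, and, being self-adjoint and positive, $-A_0$ generates a strongly continuous analytic semigroup on $L_2(D)$.

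It then remains to identify $A_0$ with $A$. For $u\in W^4_{2,\mathcal B}(D)$ and $v\in V$, I would integrate by parts twice, using $v=0$ on $\partial D$, the divergence structure of $\det D^2u$ (which gives $\int_D(2\,\partial_{x_1}\partial_{x_2}u\,\partial_{x_1}\partial_{x_2}v-\partial_{x_1}^2u\,\partial_{x_2}^2v-\partial_{x_2}^2u\,\partial_{x_1}^2v)\,\rd x=-\int_{\partial D}\kappa\,\partial_\nu u\,\partial_\nu v\,\rd S$ for $u,v\in V$), and Green's formula, to obtain
\begin{equation*}
\mathfrak a(u,v)=\int_D(\beta\Delta^2u-\tau\Delta u)\,v\,\rd x+\beta\int_{\partial D}\big(\Delta u-(1-\sigma)\kappa\,\partial_\nu u\big)\,\partial_\nu v\,\rd S,
\end{equation*}
whose boundary term vanishes by \eqref{evolution2}; hence $u\in\mathrm{dom}(A_0)$ and $A_0u=Au$. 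Conversely, if $u\in V$ satisfies $\mathfrak a(u,\cdot)=(f,\cdot)_{L_2(D)}$ for some $f\in L_2(D)$, testing against $v\in C_c^\infty(D)$ gives $\beta\Delta^2u-\tau\Delta u=f$ in $D$; since the boundary operators $u$ and $\Delta u-(1-\sigma)\kappa\,\partial_\nu u$ satisfy the Lopatinskii–Shapiro complementing condition for the elliptic operator $\beta\Delta^2-\tau\Delta$ (the zeroth-order curvature term being irrelevant at the symbol level), the standard $L_2$-elliptic regularity theory on the $C^4$ domain $D$ yields $u\in W^4_2(D)$, and inserting this into the Green identity above forces $\Delta u-(1-\sigma)\kappa\,\partial_\nu u=0$ on $\partial D$, so $u\in W^4_{2,\mathcal B}(D)$ and $Au=f$. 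Hence $A_0=A$; since moreover $A\in\mathcal L(W^4_{2,\mathcal B}(D),L_2(D))$ trivially, we conclude $A\in\mathcal H(W^4_{2,\mathcal B}(D),L_2(D))$.

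The main obstacle will be this last identification of the domain — specifically the $W^4_2$ boundary regularity for the hinged fourth-order problem and the careful bookkeeping of the boundary integral that produces exactly the curvature-dependent condition \eqref{evolution2}. A softer alternative that bypasses part of this is to first prove, by the very same form $\mathfrak a$ with $\tau=0$, that $\beta\Delta^2$ with hinged boundary conditions belongs to $\mathcal H(W^4_{2,\mathcal B}(D),L_2(D))$, and then to note that $-\tau\Delta$ maps $W^4_{2,\mathcal B}(D)$ into $W^2_2(D)=[L_2(D),W^4_2(D)]_{1/2}$ and, by the interpolation estimate $\|\Delta v\|_{L_2(D)}\le\epsilon\|\Delta^2v\|_{L_2(D)}+C_\epsilon\|v\|_{L_2(D)}$, is a lower-order perturbation with $\Delta^2$-bound zero, so that generation of an analytic semigroup is preserved.
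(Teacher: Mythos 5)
Your approach via quadratic forms and Kato's representation theorem is sound, but it is a genuinely different route from the paper's. The paper writes $A$ as a Douglis–Nirenberg system, computes the principal interior and boundary symbols, and \emph{directly verifies} the Lopatinskii–Shapiro complementing condition by a case analysis of an ODE on $[0,\infty)$ (separating $\mu=0$ from $\mu\neq 0$ and analyzing the four roots of the quartic characteristic polynomial); generation of the analytic semigroup then follows in one stroke from \cite[Remark 4.2(b)]{Amann1993}, and the domain $W^4_{2,\mathcal B}(D)$ comes automatically from that theorem. You, by contrast, build the coercive symmetric form $\mathfrak a$ with $\mathfrak a(v,v)=2E_m(v)$ on $V=W^2_{2,\mathcal B}(D)$, obtain a positive self-adjoint $A_0$ from Kato, and then \emph{identify} $\mathrm{dom}(A_0)$ with $W^4_{2,\mathcal B}(D)$. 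Your coercivity computation (the pointwise bound $a^2+b^2+2\sigma ab+2(1-\sigma)c^2\ge(1-|\sigma|)(a^2+b^2+2c^2)$, and the elementary $W^2_2$-control by $\|D^2v\|_{L_2}$ via Poincaré for $v|_{\partial D}=0$) is correct and, as you remark, does not use convexity of $D$. Your integration-by-parts identity using the polarized form of the Sweers–Vassi lemma correctly produces the natural boundary condition $\Delta u-(1-\sigma)\kappa\,\partial_\nu u=0$, so the ``$\supseteq$'' inclusion for the domain is fine.

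The genuine delta between the two arguments is exactly the step you flag as the obstacle: to close the ``$\subseteq$'' inclusion you still have to invoke $L_2$-elliptic boundary regularity for the fourth-order Navier-type problem, and that regularity theorem \emph{presupposes} the complementing condition you are asserting rather than checking. So the two proofs are not independent in the part that carries the real weight: the paper verifies Lopatinskii–Shapiro by hand, while you outsource it to the regularity literature. What your route buys in exchange is more: it shows $A$ is self-adjoint and strictly positive, which immediately subsumes the paper's Lemma~\ref{spectrumSG} (the spectrum argument there is essentially your coercivity computation in disguise) and gives the exponential-decay estimate preceding Lemma~\ref{regularizingsemigroup} without an extra appeal to \cite[Theorem 4.4.3]{Pazy1983}. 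Your perturbation remark at the end (absorbing $-\tau\Delta$ as a $\Delta^2$-bounded perturbation with relative bound zero) is also correct and is an alternative way to reduce to $\tau=0$; the paper does not need it because it treats the full operator at once. If you wanted to make your proof self-contained at the same level as the paper's, you would still have to carry out the ODE verification of the complementing condition, so the net saving is modest but the structural insight (self-adjointness, variational characterization, convexity not needed for generation) is a real gain.
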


\begin{proof}
We shall apply \cite[Remark 4.2(b)]{Amann1993} to prove the result. 
Setting  $D_j:=-i\partial_{x_j}$ for $j=1,2$, we observe that the operator $A$ may be written as 
\begin{equation*}
		Av= \beta \sum_{k,l=1}^{2} D_k^2 D_l^2 v + \tau \sum_{k=1}^{2} D_k^2 v .
\end{equation*}
It is easily seen that the principal symbol associated to $A$ is given by 
\begin{equation}
\label{prsymbolA}
	a_{\pi}(\xi):=\beta \vert \xi \vert^4, \quad \xi \in \mathbb{R}^2 ,
\end{equation}
and since $\beta >0$, the spectrum  $\sigma(a_{\pi}(\xi))$ satisfies 
\begin{equation*}
	\sigma(a_{\pi}(\xi)) \subset \{ z \in \mathbb{C}\, :  \,\text{Re } z > 0 \} \; \text{ for all } \,  \xi \in \mathbb{S}_1,
\end{equation*}
with $\mathbb{S}_1$ denoting the unit sphere in $\mathbb{R}^2$. This means that $A$ is normally elliptic (see \cite[p.18]{Amann1993} for a definition). 

It follows from \eqref{evolution2} that the system $\mathcal{B}$ of boundary operators is $\mathcal{B} := (\mathcal{B}_1, \mathcal{B}_2)$, where 
\begin{equation}
	\label{boundaryOperator}
	\mathcal{B}_1 v = \text{tr } v, \quad 
	\mathcal{B}_{2}v =- \sum_{k=1}^2 \text{tr } D_k^2 v - 
	(1 -\sigma) i \kappa \sum_{k=1}^2 \nu_k  \text{tr } D_k v
\end{equation}
for $v \in W^4_{2,\mathcal{B}}(D)$, with $\text{tr}$ denoting the trace operator on $\partial D$. 
Since ~$\partial D \in C^{4}$, we have ~$\nu \in C^{3}(\partial D)$ and ~$\kappa \in C^{2}(\partial D)$, and hence
\begin{equation}
	(1 - \sigma) i \kappa \nu_k \in C^{2}(\partial D, \mathbb{C}), \quad k= 1,2 .
\end{equation}
The principal boundary symbol of $\mathcal{B}$ is given by
\begin{equation*}
	b_{\pi} (\xi) := (1 ,- \vert \xi \vert^2), \quad \xi \in 
	\mathbb{R}^2 .
\end{equation*}
Recall from \cite[p.18]{Amann1993} that $(A, \mathcal{B})$ is said to be normally elliptic if $A$ is normally elliptic and $\mathcal{B}$ satisfies the normal complementing condition with respect to $A$. The latter condition is also called the Lopatinskii-Shapiro condition and 
requires that, for any $x \in \partial D$, $\xi \in  \mathbb{R}^2$ with $\xi \cdot \nu(x) = 0$ and any $\mu\in\mathbb{C}$ with $\text{Re }\mu \geq 0$ and $\vert \mu \vert + \vert \xi \vert  \neq 0$, 
zero is the only exponentially decaying solution of the initial value
problem on $[0,\infty)$: 
\begin{empheq}{align*}
\big( \mu + a_{\pi}(\xi + \nu(x) i \partial_{t}) \big) v &=0, 
	\\
b_{\pi}(\xi + \nu(x) i \partial_{t})v(0)  &=0.
\end{empheq}
By \cite[Remark 4.2(b)]{Amann1993} we obtain $A \in \mathcal{H}(W^4_{2,\mathcal{B}}(D),L_2(D))$ provided $(A,\mathcal{B})$ is normally elliptic. So we need only to verify the normal complementing condition. First observe that, with the definitions of $a_{\pi}$ and $b_{\pi}$, the above initial value problem is written as
\begin{empheq}{align}
\big( \mu + \beta (\vert \xi \vert^2 - \partial_t^2)^2 \big) v(t) &=0, \quad t>0, 
\label{Lopa.1}   
\\
 v(0) = \partial_t^2 v(0) &=0.
\label{Lopa.2}
\end{empheq}
If $\mu =0$, then the general solution to \eqref{Lopa.1} is 
\begin{equation*}
	v(t)= (C_1 + C_2 t)e^{\vert \xi \vert t} + (C_3 + C_4 t)
	e^{-\vert \xi \vert t}, \qquad t \geq 0,
\end{equation*}
with $C_k \in \mathbb{R}$ for $k=1,\dots, 4$. In this case $\xi \neq 0$. Since the solution $v$ must decay exponentially, we must have $C_1=C_2=0$. Imposing that $v$ satisfies also the initial conditions \eqref{Lopa.2}, we get $C_3=C_4=0$ and therefore $v \equiv 0$. 

When $\mu \neq 0$, the characteristic equation of \eqref{Lopa.1} is  
\begin{equation*}
	r^4 - 2 \vert \xi \vert^2 r^2+ \vert \xi\vert^4 + \frac{\mu}{\beta} =0,
\end{equation*}
and its roots are 
\begin{equation*}
	r_k = \pm \sqrt{\vert \xi \vert^2 \pm i \frac{\sqrt{\mu}}{\sqrt{\beta}}} \, ,
	\quad k=1,\dots,4
\end{equation*}
(recall that the square root of a complex number has a nonnegative real part). Writing $\sqrt{\mu}=a_{\mu}+i b_{\mu}$ with $a_{\mu}, b_{\mu} \in \mathbb{R}$, $a_{\mu} \geq 0$ 
we have that $\text{Re}\,\mu \geq 0$ implies $a_{\mu} > 0$. Hence, 
\begin{equation*}
	r_{1,3} = \pm \sqrt{ \bigg( \vert \xi \vert^2 -\frac{b_{\mu}}
		{\sqrt{\beta}}\bigg) + i \frac{a_{\mu}}{\sqrt{\beta}}} \,, 
	\quad 
	r_{2,4} = \pm \sqrt{ \bigg( \vert \xi \vert^2 + \frac{b_{\mu}}
		{\sqrt{\beta}}\bigg) - i  \frac{a_{\mu}}{\sqrt{\beta}}} \, ,
\end{equation*}
with $r_1$, $r_2$ having positive real part. Therewith, since each root has multiplicity one, the general solution of \eqref{Lopa.1} is given by
\begin{equation*}
	v(t)=\sum_{k=1}^4 C_k e^{r_k t},\quad t\geq 0,
\end{equation*}
with $C_k \in \mathbb{R}$ for $k=1,\dots, 4$, and, since $v$ must decay exponentially, $C_1=C_2=0$. Invoking \eqref{Lopa.2}, we find that 
\begin{equation*}
	0=v(0) = C_3 + C_4, \quad 0 = \partial_t^2 v(0) = C_3 r_3^2 + C_4 r_4^2.
\end{equation*}
The initial value problem \eqref{Lopa.1}-\eqref{Lopa.2} has for 
$(\xi,\mu) \in \mathbb{R}^2 \times \{z \in \mathbb{C}\, : \,\text{Re }z \geq 0 \}$ with $\vert \mu \vert + \vert \xi \vert \neq 0$ only the trivial solution $v \equiv 0$ if 
\begin{equation}
	\label{Lopa.3}
	r_4^2 - r_3^2 = (r_4 + r_3)(r_4 - r_3) \neq 0.
\end{equation}
Because $r_4 \neq  \pm r_3$, condition \eqref{Lopa.3} holds. Consequently, $(A,\mathcal{B})$ is normally elliptic.
\end{proof}

Next, we show that the spectrum of $-A$ lies in the left half-plane $\{z \in \mathbb{C}\,:\,\text{Re }z <0\}$.

\begin{lemma}
	\label{spectrumSG}
	We have 
	\begin{equation*}
		\sigma(-A) \subset \{ z \in \mathbb{C}\, : \, \emph{Re }z <0 \}.
	\end{equation*}
\end{lemma}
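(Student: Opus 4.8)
The plan is to exhibit $A$ as bounded below with respect to the $L_2(D)$–inner product and then to exploit that $A$ has compact resolvent. Concretely, I would establish a constant $\delta>0$ such that $\langle Au,u\rangle_{L_2(D)}\ge\delta\,\|u\|_{L_2(D)}^2$ for every $u\in W^4_{2,\mathcal{B}}(D)$. Since $W^4_{2,\mathcal{B}}(D)$ embeds compactly into $L_2(D)$ and $\rho(A)\neq\emptyset$ by Lemma~\ref{aSemigroup}, the operator $A$ has compact resolvent, so $\sigma(A)$ consists of eigenvalues only; and if $Au=\mu u$ with $u\neq0$, then, decomposing $u$ into real and imaginary parts and using the symmetry of the form below, $\mu\,\|u\|_{L_2(D)}^2=\langle Au,u\rangle_{L_2(D)}\ge\delta\,\|u\|_{L_2(D)}^2$, hence $\mu\ge\delta$. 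Therefore $\sigma(A)\subset[\delta,\infty)$ and $\sigma(-A)\subset(-\infty,-\delta]\subset\{z\in\mathbb{C}:\mathrm{Re}\,z<0\}$.

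To get the lower bound I would pass to the variational form of $A$. For $u\in W^4_{2,\mathcal{B}}(D)$ and $v\in W^2_{2,\mathcal{B}}(D)$, Green's formula for the bilaplacian together with the classical identity rewriting $\int_D\big(2\,\partial_{x_1}\partial_{x_2}u\,\partial_{x_1}\partial_{x_2}v-\partial_{x_1}^2u\,\partial_{x_2}^2v-\partial_{x_2}^2u\,\partial_{x_1}^2v\big)\,\rd x$ as a boundary integral proportional to $\int_{\partial D}\kappa\,\partial_\nu u\,\partial_\nu v\,\rd s$ gives $\langle Au,v\rangle_{L_2(D)}=a(u,v)$, where
\[
a(u,v):=\beta\int_D\Big(\Delta u\,\Delta v+(1-\sigma)\big(2\,\partial_{x_1}\partial_{x_2}u\,\partial_{x_1}\partial_{x_2}v-\partial_{x_1}^2u\,\partial_{x_2}^2v-\partial_{x_2}^2u\,\partial_{x_1}^2v\big)\Big)\rd x+\tau\int_D\nabla u\cdot\nabla v\,\rd x .
\]
Here all boundary contributions combine into $-\beta\int_{\partial D}\big(\Delta u-(1-\sigma)\kappa\,\partial_\nu u\big)\partial_\nu v\,\rd s$ and cancel because $v=0$ on $\partial D$ and $u$ satisfies the hinged conditions $u=\Delta u-(1-\sigma)\kappa\,\partial_\nu u=0$ on $\partial D$ of \eqref{evolution2}; this is where $\partial D\in C^4$ enters. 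In particular $\langle Au,u\rangle_{L_2(D)}=2E_m(u)$ with $E_m$ as in \eqref{energym}, and $a$ is a bounded, symmetric bilinear form on $W^2_{2,\mathcal{B}}(D)$.

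It then remains to check coercivity $a(u,u)\ge\delta\,\|u\|_{L_2(D)}^2$ on $W^2_{2,\mathcal{B}}(D)$. Using the pointwise identity $(\Delta u)^2+2(1-\sigma)\big((\partial_{x_1}\partial_{x_2}u)^2-\partial_{x_1}^2u\,\partial_{x_2}^2u\big)=(1-\sigma)\,|D^2u|^2+\sigma\,(\Delta u)^2$ and $0\le(\Delta u)^2\le2\,|D^2u|^2$, this is bounded below by $(1-|\sigma|)\,|D^2u|^2$ for every $\sigma\in(-1,1)$, so $a(u,u)=2E_m(u)\ge\beta(1-|\sigma|)\,\|D^2u\|_{L_2(D)}^2$. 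Since $u=0$ on $\partial D$, Poincaré's inequality applied twice — via $\|\nabla u\|_{L_2(D)}^2=-\int_D u\,\Delta u\,\rd x\le\|u\|_{L_2(D)}\|\Delta u\|_{L_2(D)}$ — gives $\|u\|_{L_2(D)}\le C\,\|D^2u\|_{L_2(D)}$ on $W^2_{2,\mathcal{B}}(D)$, and the estimate follows with $\delta:=\beta(1-|\sigma|)C^{-2}$. I expect the main obstacle to be the first step: verifying by Green's formula that the hinged boundary conditions are precisely the natural boundary conditions attached to $E_m$, which relies on the curvature identity for the torsion term and is the very reason the contribution $(1-\sigma)\kappa\,\partial_\nu u$ appears in \eqref{evolution2}.
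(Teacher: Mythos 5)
Your proof is correct and shares the paper's core mechanism — compact resolvent of $-A$ reduces the claim to eigenvalues, and Green's formula together with the curvature identity of \cite[Lemma A.1]{SweersVassi2018} converts $\langle Au,u\rangle_{L_2(D)}$ into $2E_m(u)$ with all boundary terms absorbed by the hinged conditions (this is also exactly the computation in the paper, just phrased there as testing the eigenvalue equation by $\overline{\varphi}$ and separating real and imaginary parts). Where you diverge is the final positivity estimate. The paper uses $\tfrac12(\Delta\varphi_j)^2\le |D^2\varphi_j|^2$ to reach $\mu\|\varphi\|_{L_2}^2\le -\tfrac12\beta(1+\sigma)\|\Delta\varphi\|_{L_2}^2-\tau\|\nabla\varphi\|_{L_2}^2\le 0$, and then needs a separate contradiction step ($\mu=0\Rightarrow\Delta\varphi=0\Rightarrow\varphi\equiv 0$) to upgrade $\le$ to $<$. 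You instead use the pointwise rewriting $(\Delta u)^2+2(1-\sigma)\big((\partial_{x_1}\partial_{x_2}u)^2-\partial_{x_1}^2u\,\partial_{x_2}^2u\big)=(1-\sigma)|D^2u|^2+\sigma(\Delta u)^2\ge(1-|\sigma|)|D^2u|^2$ (valid for all $\sigma\in(-1,1)$ by the same Young-type inequality, applied in the opposite direction when $\sigma<0$), and then close via the Poincaré chain $\|u\|_{L_2}\le C_P\|\nabla u\|_{L_2}$ and $\|\nabla u\|_{L_2}^2=-\int_D u\,\Delta u\,\rd x$ to obtain $a(u,u)\ge\delta\|u\|_{L_2}^2$ directly. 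This yields a quantitative spectral gap $\sigma(-A)\subset(-\infty,-\delta]$ without the contradiction argument, at the small cost of the extra Poincaré step; the handling of complex eigenfunctions via symmetry of the real bilinear form is also fine. Both routes are sound; yours is slightly more structural (coercivity of the form on $W^2_{2,\mathcal{B}}(D)$) and makes the gap explicit in terms of the Poincaré constant of $D$.
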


\begin{proof}
Due to the compact embedding of $W^4_{2,\mathcal{B}}(D)$ in $L_2(D)$, and since ~$A \in \mathcal{H}(W^4_{2,\mathcal{B}}(D),L_2(D))$, the operator $-A$ has compact resolvent. 
In view of \cite[Theorem 6.29]{Kato1995}, its spectrum consists only of isolated eigenvalues of finite 
multiplicity. If $\mu\in \mathbb{C}$ is such an eigenvalue of $-A$ with a corresponding eigenfunction $\varphi \in W^4_{2,\mathcal{B}}(D,	\mathbb{C})$, then testing the equation 
\begin{equation*}
		(-\beta \Delta^2 + \tau \Delta) \varphi = \mu \varphi
\end{equation*}
by its complex conjugate $\overline{\varphi}(x) = \overline{\varphi (x)}$ yields 
\begin{equation*}
\mu \int_D \vert \varphi \vert^2 \, \rd x = \int_D  \overline{\varphi} 
( -\beta  \Delta^2 \varphi + \tau  \Delta \varphi ) \, \rd x. 
\end{equation*}
Writing $\varphi: \overline{D} \rightarrow \mathbb{C}$ in terms of 
its real and imaginary parts, say ~$\varphi(x) = \varphi_1(x) + i \varphi_2(x)$ for ~$\varphi_1(x), 
\varphi_2(x) \in \mathbb{R}$, and noticing that 
\begin{equation}
	\label{complexcon.1}
	\vert \nabla \varphi \vert^2 = \sum_{j=1}^2 \vert \nabla \varphi_j \vert ^2, \quad   
	\vert \partial_{\nu} \varphi \vert^2 = \sum_{j=1}^2 (\partial_{\nu} \varphi_j)^2, 
	\quad
	\vert \Delta \varphi \vert^2 = \sum_{j=1}^2  ( \Delta \varphi_j )^2, 
\end{equation}
we obtain, using Green's formula twice and the boundary conditions for $\varphi$, that
\begin{align*}
	-\beta \int_D \overline{\varphi} \, \Delta^2 \varphi \, \rd x 
	&= \beta \int_D \nabla \Delta \varphi \cdot \nabla \overline{\varphi}\, \rd x
= \beta \int_{\partial D} \Delta \varphi \, \partial_{\nu} \overline{\varphi}\, \rd\omega 
- \beta \int_D  \Delta \varphi \, \Delta \overline{\varphi}\, \rd x
\\
&= \beta (1-\sigma) \int_{\partial D} \kappa \vert \partial_{\nu} \varphi \vert^2 \, \rd\omega 
-\beta \int_D \vert \Delta \varphi \vert^2 \, \rd x.
\end{align*}
Again by applying Green's formula and the fact that $\varphi=0$ on $\partial D$ we get 
\begin{equation*}
\tau \int_D \overline{\varphi} \, \Delta \varphi \, \rd x= - \tau  \int_D \vert \nabla \varphi \vert^2 \, \rd x ,
\end{equation*}
whence
\begin{equation*}
\mu \int_D \vert \varphi \vert^2 \, \rd x 
= \beta (1-\sigma) \int_{\partial D} \kappa  \vert \partial_{\nu} \varphi \vert^2 \, \rd\omega 
- \beta \int_D \vert \Delta \varphi\vert^2 \, \rd x - \tau \int_D \vert \nabla \varphi \vert^2 \, \rd x. 
\end{equation*}
By the identity established in \cite[Lemma A.1]{SweersVassi2018} we find that 
\begin{align*}
\beta (1-\sigma) \int_{\partial D} &\kappa ( \partial_{\nu} \varphi_j )^2 \, \rd\omega  
= -2 \beta (1-\sigma) \int_D \big( (\partial_{x_2}\partial_{x_1} \varphi_j )^2 - 
\partial_{x_1}^2 \varphi_j  \, \partial_{x_2}^2 \varphi_j  \big)\, \rd x 
\nonumber \\
&= -  \beta (1-\sigma) \int_D \big( (\partial_{x_1}^2 \varphi_j )^2 
+ (\partial_{x_2}^2 \varphi_j )^2 + 2 (\partial_{x_2}\partial_{x_1} \varphi_j )^2 - ( \Delta \varphi_j 
	)^2 \big)\, \rd x
\end{align*}
for $j=1,2$ and hence, together with \eqref{complexcon.1}, we infer that 
\begin{align*}
\mu \int_D \vert \varphi \vert^2 \, \rd x  
&= - \beta (1-\sigma) \int_D \big( \vert \partial_{x_1}^2 \varphi\vert^2 
+ \vert \partial_{x_2}^2 \varphi \vert^2 
+ 2 \vert \partial_{x_2} \partial_{x_1} \varphi \vert^2 \big)\, \rd x 
\\
& \quad
- \beta \sigma \int_D \vert \Delta \varphi \vert^2 \, \rd x 
- \tau \int_D \vert \nabla \varphi \vert^2 \, \rd x.
\end{align*}
Now using Young's inequality gives 
\begin{equation*}
\frac{1}{2} (\Delta \varphi_j)^2 \leq (\partial_{x_1}^2 \varphi_j)^2 + (\partial_{x_2}^2 \varphi_j)^2 + 2 (\partial_{x_2}\partial_{x_1} \varphi_j)^2 \, \text{ in } \,  D
\end{equation*}
for $j=1,2$, so that 
\begin{align}
\mu \int_D \vert \varphi \vert^2 \, \rd x &\leq -\frac{1}{2} \beta (1-\sigma) \int_D \vert 
\Delta \varphi \vert^2 \, \rd x - \beta \sigma\int_D \vert \Delta \varphi \vert^2 \, \rd x
-\tau \int_D \vert \nabla \varphi \vert^2 \, \rd x
	\nonumber \\
& = - \frac{1}{2} \beta (1+\sigma) \int_D \vert \Delta \varphi \vert^2 \, \rd x
-\tau \int_D \vert \nabla \varphi \vert^2 \, \rd x.
\label{complexcon.6}
\end{align}
Consequently, since $\beta (1+ \sigma) >0$ and $\tau \geq 0$, 
\begin{equation*}
\mu \int_D \vert \varphi \vert^2 \, \rd x  \leq 0
\end{equation*}
and thus $\mu \leq 0$.

 It remains to show that $\mu<0$. If $\mu=0$, then, due to  \eqref{complexcon.6},
 \begin{equation*}
 - \frac{1}{2} \beta (1+\sigma) \int_D \vert \Delta \varphi \vert^2 \, \rd x 
 -\tau \int_D \vert \nabla \varphi \vert^2 \, \rd x =0,
 \end{equation*}
and since $\tau \geq 0$, we get 
\begin{equation*}
- \frac{1}{2} \beta (1+\sigma) \int_D \vert \Delta \varphi \vert^2 \, \rd x =0.  
\end{equation*}
So $\Delta \varphi =0$ in $D$. Since ~$\varphi = 0$ on $\partial D$, it follows that ~$\varphi \equiv 0$, which is a contradiction. Thus ~$\mu<0$. 
\end{proof}

Thanks to the above lemma we can use \cite[Theorem 4.4.3]{Pazy1983} to infer that the 
semigroup ~$\{e^{-tA} \, : \, t\geq 0\}$ has exponential decay, i.e. there are $M \geq 1$ and $\varpi >0$ such that 
\begin{equation*}
	\Vert e^{-tA} \Vert_{\mathcal{L}(L_2(D))} \leq M e^{-\varpi t}, \quad t\geq 0.
\end{equation*}
Moreover, the semigroup has the following regularizing properties, which we will use later. 

\begin{lemma}
\label{regularizingsemigroup}
There exists $\varpi>0$ such that the following holds. If $0\leq \gamma \leq \alpha \leq 1$ with ~$4\alpha,  4\gamma  \notin \left\{\frac{1}{2}, \frac{5}{2} \right\}$, then 
\begin{equation*}
\Vert e^{-tA} \Vert_{\mathcal{L}(W^{4\gamma}_{2,\mathcal{B}}(D), 
W^{4\alpha}_{2,\mathcal{B}}(D))} \leq M e^{-\varpi t}t^{\gamma -\alpha}, \quad t>0,
\end{equation*}
for some number $M\geq 1$ depending on $\alpha$ and $\gamma$. 
\end{lemma}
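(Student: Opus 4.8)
The plan is to reduce the claimed estimate to the standard smoothing properties of the analytic semigroup $\{e^{-tA}\}$ in $L_2(D)$, combined with the identification of the scale $W^{4\theta}_{2,\mathcal{B}}(D)$ with the domains of the fractional powers of $A$. Concretely, I would establish that, for $\theta\in[0,1]$ with $4\theta\notin\{\tfrac12,\tfrac52\}$, one has $\text{dom}(A^\theta)=W^{4\theta}_{2,\mathcal{B}}(D)$ with equivalent norms, and that there are $\varpi>0$ and constants $C_\theta\geq 1$ with $\Vert A^\theta e^{-tA}\Vert_{\mathcal{L}(L_2(D))}\leq C_\theta t^{-\theta}e^{-\varpi t}$ for $t>0$. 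Granting these two facts, writing $e^{-tA}=A^{-\alpha}\big(A^{\alpha-\gamma}e^{-tA}\big)A^{\gamma}$ and mapping $W^{4\gamma}_{2,\mathcal{B}}(D)\xrightarrow{A^\gamma}L_2(D)\xrightarrow{A^{\alpha-\gamma}e^{-tA}}L_2(D)\xrightarrow{A^{-\alpha}}W^{4\alpha}_{2,\mathcal{B}}(D)$ immediately gives the result; the case $\gamma=\alpha$ is handled separately below.

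First I would collect the semigroup input. By Lemma~\ref{aSemigroup}, $-A$ generates a strongly continuous analytic semigroup on $L_2(D)$, and since $W^4_{2,\mathcal{B}}(D)$ embeds compactly into $L_2(D)$, $A$ has compact resolvent, so $\sigma(-A)$ consists of isolated eigenvalues; by Lemma~\ref{spectrumSG} these all lie in $\{\mathrm{Re}\,z<0\}$. As the growth bound of an analytic semigroup coincides with its spectral bound, there is $\varpi>0$ with $\Vert e^{-tA}\Vert_{\mathcal{L}(L_2(D))}\leq M e^{-\varpi t}$, as already recorded in the text, and in particular $0\in\rho(A)$. Hence the fractional powers $A^\theta$, $\theta\geq 0$, are well-defined closed invertible operators, $\text{dom}(A^\theta)$ is a Banach space for the equivalent norm $\Vert A^\theta\,\cdot\,\Vert_{L_2(D)}$, and by standard analytic semigroup theory (see \cite{Pazy1983}) $\Vert A^\theta e^{-tA}\Vert_{\mathcal{L}(L_2(D))}\leq c_\theta t^{-\theta}$ for $0<t\leq 1$; combining this with the exponential decay (split $e^{-tA}=e^{-tA/2}e^{-tA/2}$ and estimate each factor on its natural time range) yields $\Vert A^\theta e^{-tA}\Vert_{\mathcal{L}(L_2(D))}\leq C_\theta t^{-\theta}e^{-\varpi t}$ for all $t>0$, after possibly shrinking $\varpi$.

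The crux is the domain identification. I would observe that $A$ is the self-adjoint operator associated with the symmetric bilinear form underlying the mechanical energy $E_m$ in \eqref{energym} together with the $\tau$-term, this form being coercive on $W^{2}_{2,\mathcal{B}}(D)$ by the computation in the proof of Lemma~\ref{spectrumSG} (using that $\Vert\Delta\,\cdot\,\Vert_{L_2(D)}$ is an equivalent norm on $W^2_{2,\mathcal{B}}(D)$ for convex $D$). Being nonnegative, self-adjoint and invertible with $\text{dom}(A)=W^4_{2,\mathcal{B}}(D)$, it satisfies $\text{dom}(A^\theta)=[L_2(D),W^4_{2,\mathcal{B}}(D)]_\theta$ with equivalent norms for all $\theta\in[0,1]$ (Heinz--Kato); alternatively, this follows from $A\in\mathcal{H}(W^4_{2,\mathcal{B}}(D),L_2(D))$ together with the bounded imaginary powers of elliptic boundary value problems. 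It then remains to identify the complex interpolation space, namely
\[
[L_2(D),W^4_{2,\mathcal{B}}(D)]_\theta = W^{4\theta}_{2,\mathcal{B}}(D),\qquad 4\theta\in[0,4]\setminus\{\tfrac12,\tfrac52\},
\]
with equivalent norms. This is the interpolation of $L_2$-Sobolev spaces subject to the hinged boundary conditions $\mathcal{B}$, which I expect to be the main obstacle: one must track how the conditions $v=0$ and $\Delta v-(1-\sigma)\kappa\partial_\nu v=0$ behave under interpolation, the values $\tfrac12$ and $\tfrac52$ being precisely the Sobolev indices at which, respectively, $\mathrm{tr}$ and the second-order trace in $\mathcal{B}$ cease to be defined and at which the piecewise definition of $W^s_{2,\mathcal{B}}(D)$ switches. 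I would invoke the corresponding results for boundary-condition Sobolev scales (as in \cite{Amann1993}, and along the lines of the clamped case treated in \cite{LaurencotWalker2016}).

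Finally, the conclusion: for $0\leq\gamma\leq\alpha\leq 1$ with $4\gamma,4\alpha\notin\{\tfrac12,\tfrac52\}$ and $v\in W^{4\gamma}_{2,\mathcal{B}}(D)$, if $\gamma<\alpha$ then, by the two previous steps, $\Vert e^{-tA}v\Vert_{W^{4\alpha}_{2,\mathcal{B}}(D)}\leq c\,\Vert A^{\alpha}e^{-tA}v\Vert_{L_2(D)}=c\,\Vert A^{\alpha-\gamma}e^{-tA}(A^{\gamma}v)\Vert_{L_2(D)}\leq c\,C_{\alpha-\gamma}\,t^{\gamma-\alpha}e^{-\varpi t}\Vert A^{\gamma}v\Vert_{L_2(D)}\leq M\,t^{\gamma-\alpha}e^{-\varpi t}\Vert v\Vert_{W^{4\gamma}_{2,\mathcal{B}}(D)}$, where the first and last inequalities use the norm equivalences from Step~2. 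If $\gamma=\alpha$, then $e^{-tA}$ commutes with $A^{-\gamma}$, so $\Vert e^{-tA}\Vert_{\mathcal{L}(W^{4\gamma}_{2,\mathcal{B}}(D))}\leq\Vert e^{-tA}\Vert_{\mathcal{L}(L_2(D))}\leq M e^{-\varpi t}=M e^{-\varpi t}t^{\gamma-\alpha}$. In both cases the asserted bound holds, which completes the proof.
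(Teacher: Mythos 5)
Your proof is correct, but it takes a genuinely different route from the one in the paper. The paper (i) cites Triebel's Theorem~4.3.3(a) to identify $W^{4\theta}_{2,\mathcal{B}}(D)$ with the interpolation space $(L_2(D),W^4_{2,\mathcal{B}}(D))_\theta$ (real interpolation $(\cdot,\cdot)_{\theta,2}$ for $4\theta\notin\{1,2,3\}$, complex $[\cdot,\cdot]_\theta$ otherwise), and then (ii) quotes Amann's Theorem~V.2.1.3 as a black box for the regularizing estimate of an analytic semigroup along an interpolation scale. You instead pass through the fractional power calculus of $A$: you use that $A$ is self-adjoint and positive (verified via the symmetry and coercivity of the bilinear form underlying $E_m$), invoke the Heinz--Kato-type identification $\mathrm{dom}(A^\theta)=[L_2(D),W^4_{2,\mathcal{B}}(D)]_\theta$, and then factor $e^{-tA}=A^{-\alpha}(A^{\alpha-\gamma}e^{-tA})A^\gamma$ together with the standard bound $\Vert A^\theta e^{-tA}\Vert_{\mathcal{L}(L_2)}\lesssim t^{-\theta}e^{-\varpi t}$. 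Both routes hinge on the same interpolation identification of the scale $W^{4\theta}_{2,\mathcal{B}}(D)$; your approach is more explicit and uses the self-adjointness of $A$ (a special feature here), whereas the paper's is a direct appeal to a general theorem that does not require self-adjointness. One small thing worth making explicit in your argument: the needed complex-interpolation identification for \emph{all} $4\theta\notin\{\tfrac12,\tfrac52\}$ (not just $4\theta\in\{1,2,3\}$) follows from the real-interpolation version via the standard fact that $(E_0,E_1)_{\theta,2}\doteq[E_0,E_1]_\theta$ for Hilbert couples; the paper's formulation of \eqref{regularizingsemigroup.1} keeps the two functors separate precisely because of how Triebel's theorem is stated. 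Also, while you flag the interpolation identification as the main obstacle and only gesture at references, that is indeed where the substance lies and is exactly the step the paper also offloads to the literature.
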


\begin{proof}
We denote by $\left[\cdot, \cdot\right]_{\theta}$ the 
complex and by $\left(\cdot,\cdot\right)_{\theta,q}$, $1 \leq q\leq\infty$, 
the real interpolation functor for $0<\theta<1$. We can easily check that the system of boundary operators $\mathcal{B}:=(\mathcal{B}_1,\mathcal{B}_2)$ given in \eqref{boundaryOperator} forms a normal system in the sense of \cite[Definition 4.3.3(1)]{Triebel1995}. Then by \cite[Theorem 4.3.3(a)]{Triebel1995}, we obtain
\begin{equation}
\label{regularizingsemigroup.1}
\left( L_2(D), W^4_{2,\mathcal{B}}(D)\right)_{\theta} \doteq W^{4\theta}_{2,\mathcal{B}}(D) \, \text{ if } \, 4\theta \notin \left\{\frac{1}{2}, \frac{5}{2} \right\},
\end{equation}
where
\begin{equation*}
\left(\cdot,\cdot\right)_{\theta} :=\left\{
\begin{array}{ll}
\left(\cdot,\cdot\right)_{\theta,2} &\text{ if } \, 4\theta \notin \left\{ 1,2,3\right\} ,
\\[0.1cm]
\left[ \cdot, \cdot \right]_{\theta} &\text{ if } \, 4\theta \in \left\{ 1,2,3\right\}.
\end{array} 
\right.
\end{equation*}
Next let $E_0:= L_2(D)$, $E_1:=W^4_{2,\mathcal{B}}(D)$, and set 
\begin{equation*}
E_{\theta} := \left( L_2(D), W^4_{2,\mathcal{B}}(D)\right)_{\theta} \doteq W^{4\theta}_{2,\mathcal{B}}(D), \quad 4\theta \notin \left\{\frac{1}{2}, \frac{5}{2} \right\}. 
\end{equation*}
In view of Lemmas \ref{aSemigroup} and \ref{spectrumSG}, we can 
apply \cite[Theorem V.2.1.3]{Amann1995} to conclude that there 
are $\varpi >0$ and $M \geq 1$ such that 
\begin{equation*}
\Vert e^{-tA} \Vert_{\mathcal{L}(E_{\gamma},E_{\alpha})} \leq M e^{-\varpi t} t^{\gamma-\alpha},\quad t>0,
\end{equation*}
where $0\leq \gamma \leq \alpha \leq 1$ with $4\alpha, \, 4\gamma \notin \left\{\frac{1}{2}, \frac{5}{2} \right\}$, and $M$ depends on $\alpha$ and $\gamma$. 
\end{proof}

We are now in a position to prove the well-posedness of \eqref{CauchyProblem}. \\
\\
\noindent{\textit{ Proof of Theorem \ref{wellposedness}.}}
Let $4\xi \in (\frac{7}{3},4)\backslash \{\frac{5}{2}\}$ be fixed and consider an initial value ~$u^0 \in W^{4\xi}_{2,\mathcal{B}}(D)$ such that $u^0 >-1$ in $D$. Due to the continuous 
embedding of $W^{4\xi}_2(D)$ in $W^2_3(D)$ and in $C(\overline{D})$, there are $c_W>1$ and 
$\rho \in (0,\tfrac{1}{2})$ such that $\Vert v\Vert_{W^2_3(D)} \leq c_W  \Vert v\Vert_{W^{4\xi}_2(D)}$ 
for all $v\in W^{4\xi}_2(D)$ and 
\begin{equation*}
	u^0 \in S_3(2\rho), \quad  \Vert u^0 \Vert_{W^{4\xi}_{2, \mathcal{B}}(D)} \leq \frac{1}{2 \rho}.
\end{equation*}
According to Lemma \ref{regularizingsemigroup}, it holds that
\begin{equation}
\label{regu}
\Vert e^{-tA} \Vert_{\mathcal{L}(W^{4\xi}_{2,\mathcal{B}}(D))} + t^{\xi} 
\Vert e^{-tA} \Vert_{\mathcal{L}(L_2(D), W^{4\xi}_{2,\mathcal{B}}(D))} \leq M e^{-\varpi t}, \quad t\geq0. 
\end{equation}
Next, set $\rho_0:= \frac{ \rho }{M c_1} \in (0,\rho)$ and recall from Proposition \ref{ellipticp} that, for 
$v_1,v_2 \in \overline{S}_3(\rho_0)$, 
\begin{equation}
	\Vert g(v_1) - g(v_2)\Vert_{L_2(D)} \leq c_l \Vert v_1 - v_2 \Vert_{W^2_3(D)},
	\label{lipg}
\end{equation}
and that 
\begin{equation}
	\Vert g(v) \Vert_{L_2(D)} \leq c_b \;\text{ for all } \; v \in \overline{S}_3(\rho_0)
	\label{boundg}
\end{equation}
with constants $c_l, c_b>0$ depending only on $\rho$, $\varepsilon$, and $D$. Given $T>0$ we introduce the complete metric space 
\begin{equation*}
	\mathcal{V}_T := C([0,T],\overline{S}_3(\rho_0))
\end{equation*}
and define 
\begin{equation*}
	\Lambda(v)(t) := e^{-tA} u^0 - \lambda \int_0^t e^{-(t-s)A} g(v(s)) \, \rd s
\end{equation*}
for $t\in [0,T]$ and $v\in \mathcal{V}_T$. Arguing similar to the proof of \cite[Proposition 4.3]{LaurencotWalkerII} with the aid of \cite[Theorems II.5.2.1 \& II.5.3.1]{Amann1995}, we infer from 
\eqref{regu}, \eqref{lipg}, and \eqref{boundg} that the mapping $\Lambda: \mathcal{V}_T \rightarrow \mathcal{V}_T$ defines a contraction for each $\lambda>0$ provided that $T:= T(\rho, \varepsilon,\lambda)>0$ is sufficiently small. Hence $\Lambda$ has a unique fixed point $u$ in $ \mathcal{V}_T$ which solves \eqref{CauchyProblem} with the regularity specified in \eqref{regularity.1}. This readily gives parts (i)-(ii) of Theorem \ref{wellposedness} and it remains to show the global existence statement (iii) therein. To this end, proceeding similar to the proof of \cite[Theorem 2]{EscherLaurencotWalker2014}, we find that there are $\lambda_*>0$ and $N>0$ such that $\Lambda: \mathcal{V}_T \rightarrow \mathcal{V}_T$ is a contraction for each $T>0$ provided that $\lambda \in (0,\lambda_*)$ and ~$\Vert u^0 \Vert _{W^{4\xi}_{2, \mathcal{B}}(D)} \leq N$. Hence $u$ is a global solution to \eqref{CauchyProblem}, and Theorem \ref{wellposedness} (iii) follows. 
\qed
\medskip

We shall prove the refined criterion for global existence stated in Theorem ~\ref{globalcriterion} in Section ~\ref{rc}, the starting point being Theorem  ~\ref{wellposedness} (ii). 


\section{Energy Equality: Proof of Theorem \ref{Tenergyequality}}
\label{ee}
The goal of this section is to prove Theorem  \ref{Tenergyequality}. For this 
we shall use Proposition ~\ref{electrostaticE}. 
Under the assumptions of Theorem \ref{wellposedness} let $(u,\psi_u)$ be the solution 
to \eqref{evolution1}-\eqref{evolution5}. Unfortunately, we cannot directly apply Proposition 
 \ref{electrostaticE} as $u$ only belongs to $C^1((0,T_{m}),L_2(D))$. 
 To get around this difficulty we use an approximation argument, which follows the idea of \cite[Section 4]{LaurencotWalkerI}. Let 
 \begin{equation*}
 	u_{\delta}(t,x) := \frac{1}{\delta} \int_t^{t+\delta} u(s,x)\, \rd s, \quad t\in [0,T_{m}), \;
 	x\in D, \; \delta \in (0,T_{m}-t),
 \end{equation*}
be the Steklov average of $u$. Fix $T\in (0,T_{m})$ and let $\delta \in (0,T_{m}-T)$ in the sequel.  Since $u$ belongs to $C([0,T+\delta],W^{4\xi}_{2,\mathcal{B}}(D))$, we get by the fundamental theorem of calculus 
\begin{equation}
	\label{ud_prope1}
	u_{\delta}\in C^1([0,T],W^{4\xi}_{2,\mathcal{B}}(D)) \; \text{ with }\;
	\partial_t  u_{\delta}(t)= \frac{u(t+\delta)-u(t)}{\delta}, \; t\in [0,T].
\end{equation}
In addition, for $t\in [0,T]$, it holds that 
\begin{align*}
	\Vert u_{\delta}(t) - u(t)\Vert_{W^{4\xi}_{2,\mathcal{B}}(D)} &= 
	\left\Vert \frac{1}{\delta} \int_t^{t+\delta} (u(s)-u(t))\, \rd s 
	\right\Vert_{W^{4\xi}_{2,\mathcal{B}}(D)} 
	\\
	&\leq \max_{s\in [t,t+\delta]} \Vert u(s)-u(t)\Vert_{W^{4\xi}_{2,\mathcal{B}}(D)} 
	\longrightarrow 0 \;  \text{ as } \;  \delta \rightarrow 0 
\end{align*}
and thus 
\begin{equation}
	\label{ud_prope2}
	u_{\delta} \rightarrow u \; \text { in } \; C([0,T],W^{4\xi}_{2,\mathcal{B}}(D)) \;
	\text{ as } \; \delta \rightarrow 0.
\end{equation}
Note that, for any $ t_0 \in (0,T)$, $u$ belongs to $C([t_0,T+\delta], 
W^4_{2,\mathcal{B}}(D))$. Then, the estimate
\begin{equation*}
	\Vert u_{\delta}(t) - u(t)\Vert_{W^4_{2,\mathcal{B}}(D)} \leq 
	\max_{s\in [t,t+\delta]} \Vert u(s)-u(t)\Vert_{W^{4}_{2,\mathcal{B}}(D)} 
\end{equation*}
proves that, as $\delta \rightarrow 0$, $u_{\delta}(t)$ converges to $u(t)$ 
in $W^{4}_{2,\mathit{B}}(D)$ uniformly on $[t_0,T]$ for any $t_0\in (0,T)$. 
Since $t_0$ is arbitrary, 
\begin{equation}
	\label{udloc_prope1}
	u_{\delta} \rightarrow u \; \text{ in } \; C((0,T],W^{4}_{2,\mathcal{B}}(D)) \; 
	\text{ as } \; \delta \rightarrow 0.
\end{equation}
Since $u$ belongs to $C^1([t_0,T+\delta],L_2(D))$ for every $t_0 \in (0,T)$, it follows from 
\eqref{ud_prope1} that 
\begin{equation*}
	\partial_t u_{\delta}(t) = \cfrac{1}{\delta} \int_t^{t+\delta} \partial_t u(s)\, \rd s, \quad
	t\in [t_0,T].
\end{equation*}
This implies 
\begin{equation*}
	\Vert \partial_t u_{\delta}(t) - \partial_t u(t) \Vert_{L_2(D)} \rightarrow 0
	\; \text{ as } \; \delta \rightarrow 0,
\end{equation*}
uniformly in $t \in [t_0,T]$ for each $t_0\in (0,T)$, and since $t_0$ is arbitrary,
\begin{equation}
	\label{udloc_prope2}
	\partial_tu_{\delta} \rightarrow \partial_t u \; \text{ in } \; C((0,T],L_2(D)) \;
	\text{ as } \; \delta \rightarrow 0.
\end{equation}
Next, recalling that the mechanical energy $E_m$ is defined in \eqref{energym}, we obtain by direct calculations that $	E_m(u_{\delta}) \in C^1([0,T])$
with derivative 
\begin{align*}
	&\frac{\rd}{\rd t} E_m(u_{\delta}(t))
	\\
	& = \beta \int_D \Delta u_{\delta}(t) \Delta \partial_t u_{\delta}(t) \, \rd x 
	\\
	&\quad + \beta (1-\sigma) \int_D \Big( 2 \partial_{x_2} \partial_{x_1} u_{\delta}(t) 
	\partial_{x_2} \partial_{x_1} \partial_t u_{\delta}(t)  - \partial_{x_2}^2 u_{\delta}(t) 
	\partial_{x_1}^2 \partial_t u_{\delta}(t) - \partial_{x_1}^2 u_{\delta}(t) 
	\partial_{x_2}^2 \partial_t u_{\delta}(t) \Big)  \rd x
	\\
	&\quad + \tau \int_D \nabla u_{\delta}(t) \cdot \nabla \partial_t 
	u_{\delta}(t) \, \rd x
\end{align*}
for $t\in[0,T]$. Using the useful identity 
\begin{align*}
	& \int_D \Big( 2\partial_{x_2} \partial_{x_1} u_{\delta}(t) 
	\partial_{x_2} \partial_{x_1} \partial_t u_{\delta}(t)  - \partial_{x_2}^2 u_{\delta}(t) 
	\partial_{x_1}^2 \partial_t u_{\delta}(t) - \partial_{x_1}^2 u_{\delta}(t)  
	\partial_{x_2}^2 \partial_t u_{\delta}(t) \Big) \rd x
	\\
	& \quad = - \int_{\partial D} \kappa \, \partial_{\nu} u_{\delta}(t) 
	\partial_{\nu} \partial_t u_{\delta}(t) \, \rd\omega ,
\end{align*}
which follows from \cite[Lemma A.1]{SweersVassi2018}, we further get for $t \in [0,T]$ 
\begin{align*}
	\frac{\rd}{\rd t} E_m(u_{\delta}(t))
	& = \beta \int_D \Delta u_{\delta}(t) \Delta \partial_t u_{\delta}(t) \, \rd x
	+ \tau \int_D \nabla u_{\delta}(t) \cdot \nabla \partial_t 
	u_{\delta}(t) \, \rd x
	\\
	&\quad  -\beta (1-\sigma) \int_{\partial D} \kappa \; \partial_{\nu} u_{\delta}(t) \, 
	\partial_{\nu} \partial_t u_{\delta}(t) \, \rd\omega . 
\end{align*}
Applying Green's formula to the first two terms and using $\partial_t u_{\delta}(t) 
= 0$ on $\partial D$ yields for $t \in (0,T]$ that
\begin{align*}
	\frac{\rd}{\rd t} E_m(u_{\delta}(t))
	& = \beta \int_D \Delta^2 u_{\delta}(t)   \partial_t u_{\delta}(t) \, \rd x 
	- \tau \int_D \Delta u_{\delta}(t) \partial_t u_{\delta}(t) \, \rd x 
	\\
	&\quad  + \beta \int_{\partial D}  \big( \Delta u_{\delta}(t) 
	- (1-\sigma)\kappa \, \partial_{\nu} u_{\delta}(t)\big) 
	\partial_{\nu} \partial_t u_{\delta}(t) \, \rd\omega.
\end{align*}
Therefore, in view of the second boundary condition for $u_{\delta}(t)$ 
(due to $u_{\delta}(t) \in W^{4}_{2,\mathcal{B}}(D)$ for~ $t\in(0,T]$), we obtain 
\begin{equation*}
	\frac{\rd}{\rd t} E_m(u_{\delta}(t))
	= \beta \int_D \Delta^2 u_{\delta}(t)  \partial_t u_{\delta}(t) \, \rd x
	- \tau \int_D \Delta u_{\delta}(t) \partial_t u_{\delta}(t) \, \rd x, \quad t \in (0,T],
\end{equation*}
and integrating this equality on $[t_1, t_2]$ we deduce
\begin{align}
	\label{ud_prope7}
	&E_m(u_{\delta}(t_2))-E_m(u_{\delta}(t_1)) 
	\nonumber \\
	& \qquad = \beta \int_{t_1}^{t_2} \int_D \Delta^2 u_{\delta}(s)  \partial_t u_{\delta}(s)\,  \rd x \rd s
	- \tau \int_{t_1}^{t_2} \int_D \Delta u_{\delta}(s) \partial_t u_{\delta}(s) \, \rd x \rd s 
\end{align}
for $0 < t_1\leq t_2\leq T$. 
We are now concerned with the limit of \eqref{ud_prope7} as $\delta \rightarrow 0$. First, 
by \eqref{udloc_prope1} and \eqref{udloc_prope2} we see that
for any $0<t_1\leq t_2\leq T$, the right-hand side of \eqref{ud_prope7} converges to 
\begin{equation*}
	\beta \int_{t_1}^{t_2} \int_D \Delta^2 u(s) \,  \partial_t u(s)
	\, \rd x \rd s
	- \tau \int_{t_1}^{t_2} \int_D \Delta u(s) \, \partial_t 
	u(s) \, \rd x\rd s 
\end{equation*}
as $\delta \rightarrow 0$. Second, from \eqref{ud_prope2} and the 
continuous embedding $W^{4 \xi}_2 (D) \hookrightarrow W^2_2(D)$ it follows that 
\begin{equation*}
	\left \vert E_m(u_{\delta}(t_k)) - 
	E_m(u(t_k)) \right \vert 
	\longrightarrow 0 \; \text{ as } \; 
	\delta \rightarrow 0,
\end{equation*}
$t_k\in [0,T]$, $k=1,2$. Putting these limits together we get
\begin{equation}
	\label{mechanicalE}
	E_m(u(t_2)) - E_m(u(t_1)) = \int_{t_1}^{t_2} \int_D \big( 
	\beta \Delta^2 u(s) - \tau  \Delta u(s) \big) \partial_t u(s) \, \rd x\rd s 
\end{equation}
for $0 < t_1 \leq t_2 \leq T$. Since $u$ belongs to $C([0,T],W^{4\xi}_{2,\mathcal{B}}(D))$ 
and since $W^{4 \xi}_2 (D) \hookrightarrow W^2_2(D)$, we conclude that 
\begin{equation*}
	E_m(u(t_1)) \longrightarrow E_m(u^0)< \infty \;
	\text{ as } \; t_1 \rightarrow 0.
\end{equation*}
This shows that \eqref{mechanicalE} is valid for $t_1=0$.

Next, we consider the electrostatic energy $E_e$ defined in \eqref{energye}. By Proposition \ref{electrostaticE} it holds that
\begin{equation}
	\label{electrostatic_prope9}
	E_e(u_{\delta}(t_2)) -E_e(u_{\delta}(t_1)) 
	= - \int_{t_1}^{t_2} \int_D 
	g(u_{\delta}(s))\partial_t u_{\delta}(s) \rd x\rd s, \quad 0\leq t_1\leq t_2 \leq T, 
\end{equation}
and we are interested in the limit of \eqref{electrostatic_prope9} as $\delta 
\rightarrow 0$. First, since $u(t)>-1$ in $D$, it follows from \eqref{ud_prope2} that $u(t)$ and $u_{\delta}(t)$ belong to $\overline{S}_3(\rho)$ for some $\rho \in (0,1)$ and for $t\in[0,T]$ and 
$\delta \in (0,\delta_0)$ with $\delta_0>0$ sufficiently small. 
Hence, the Lipschitz property of $g$ stated in Proposition \ref{ellipticp} and the continuous embedding of $W^{4\xi}_2(D)$ in  $W^2_3(D)$ entail that, for $t\in [0,T]$, 
\begin{equation*}
	\Vert g(u_{\delta})(t) - g(u)(t) \Vert_{L_2(D)} 
	\leq c_1 \Vert  u_{\delta}(t) - u(t)\Vert_{W^{4\xi}_{2,\mathcal{B}}(D)}
\end{equation*}
with $c_1=c_1(\rho, D)>0$, whence 
\begin{equation}
	\label{ud_prope3}
	g(u_{\delta}) \rightarrow g(u) \; \text{ in } 
	\; C([0,T],L_2(D)) \; \text{ as } \; \delta \rightarrow 0
\end{equation}
by virtue of \eqref{ud_prope2}. This implies that 
\begin{equation*}
	g(u_{\delta}) \rightarrow g(u) \; \text{ in } 
	\; L_2(0,T;L_2(D)) \; \text{ as } \; \delta \rightarrow 0,
\end{equation*}
and together with \eqref{udloc_prope2} and H\"older's inequality 
we deduce that, for any $t_0\in (0,T)$,
\begin{equation*}
	g(u_{\delta})\partial_t u_{\delta} \rightarrow g(u)\partial_t u  
	\; \text{ in } \; L_1(t_0,T;L_1(D)) \; \text{ as } \; \delta \rightarrow 0.
\end{equation*} 
Thus, for any $0<t_1\leq t_2\leq T$, 
\begin{equation}
	\label{udloc_prope6}
	\left\vert \int_{t_1}^{t_2} \int_D 
	\big( g(u_{\delta}(s)) \partial_t u_{\delta}(s) - g(u(s)) \partial_t u(s) \big)
	\, \rd x\rd s \right\vert \longrightarrow 0 \; \text{ as } \; \delta \rightarrow 0.
\end{equation}
Second, in terms of the coordinates $(x,\eta) \in \Omega$, the electrostatic energy reads
\begin{align}
	\label{ud_prope17} 
	E_e(u_{\delta}(t))
	&= \varepsilon^2 \int_{\Omega} 
	\vert \nabla' \phi_{\delta}(t) - \eta  U_{\delta}(t)  \partial_{\eta} \phi_{\delta}(t) 
	\vert^2 (1+u_{\delta}(t))\, \rd(x,\eta) 
	\nonumber\\
	&\quad + \int_{\Omega} \frac{\left( \partial_{\eta} \phi_{\delta}(t) \right)^2}{1+u_{\delta}(t)}\, \rd(x,\eta), 
\end{align}
where 
\begin{equation*}
	\phi_{\delta}(t) := \phi_{u_{\delta}(t)} \; \text{ and } \; 
	U_{\delta}(t):= \frac{\nabla u_{\delta}(t)}{1 + u_{\delta}(t)}, \; t \in [0,T].
\end{equation*}
We shall show that 
\begin{equation}
	\label{ud_prope16} 
	\vert E_e(u_{\delta}(t_k)) -  E_e(u(t_k))\vert \longrightarrow 0 \; \text{ as } \; \delta \rightarrow 0,
\end{equation}
$t_k \in [0,T]$, $k=1, 2$. From Proposition \ref{ellipticp} we know that, for $t\in [0,T]$,
\begin{equation*}
	\Vert \phi_{\delta}(t) -\phi(t) \Vert_{W^2_2(\Omega)}
	\leq c_0 \Vert u_{\delta}(t)-u(t)\Vert_{W^2_3(D)}
\end{equation*}
with $c_0=c_0(\rho,D)>0$. Using the continuous embedding of $W^{4\xi}_2(D)$ in $W^2_3(D)$ and \eqref{ud_prope2}, we then conclude that 
\begin{equation}
	\label{ud_prope19} 
	\phi_{\delta} \rightarrow  \phi \; \text{ in } \;  
	\quad C([0,T],W^2_2(\Omega)) \;\text{ as } \; \delta \rightarrow 0.
\end{equation}
Again by \eqref{ud_prope2} and the continuous embedding of $W^{4\xi}_2(D)$ in $C^1(\overline{D})$ we obtain 
\begin{equation*}
	U_{\delta} \rightarrow U \; \text{ in } 
	\; C([0,T],L_{\infty}(D)) \; \text{ as } \; \delta \rightarrow 0,
\end{equation*}
which, together with the convergence \eqref{ud_prope19}, yields 
\begin{equation*}
	\nabla'\phi_{\delta} - \eta U_{\delta}  \partial_{\eta} \phi_{\delta} 
	\rightarrow \nabla'\phi - \eta U \partial_{\eta} \phi 
	\;\text{ in } \; C([0,T],L_2(\Omega)) \; \text{ as } \; \delta \rightarrow 0. 
\end{equation*}
Hence, by H\"older's inequality and \eqref{ud_prope2}, we have
\begin{equation*}
	\vert \nabla' \phi_{\delta} - \eta U_{\delta}  \partial_{\eta} \phi_{\delta} 
	\vert^2 (1+u_{\delta})
	\rightarrow \vert \nabla' \phi - \eta U \partial_{\eta} \phi_{\delta} \vert^2 (1+u)
	\;\text{ in } 
	\; C([0,T],L_1(\Omega)) \; \text{ as } \; \delta \rightarrow 0.
\end{equation*}
Also, \eqref{ud_prope2} and \eqref{ud_prope19} imply
\begin{equation*}
	\frac{\left( \partial_{\eta} \phi_{\delta} \right)^2}{1+u_{\delta}}
	\rightarrow \frac{ (\partial_{\eta} \phi )^2}{1+u}
	\; \text{ in } \;  C([0,T],L_1(\Omega)) \; \text{ as } \; \delta \rightarrow 0, 
\end{equation*}
which, together with the previous limit, proves \eqref{ud_prope16}. 
Combining \eqref{electrostatic_prope9}, \eqref{udloc_prope6}, and \eqref{ud_prope16} gives
\begin{equation}
	\label{electrostaticE_2}
	E_e(u(t_2)) - E_e(u(t_1)) = - \int_{t_1}^{t_2} \int_D 
	g(u(s)) \partial_t u(s) \, \rd x\rd s, 
	\quad 0< t_1\leq t_2 \leq T.
\end{equation}
Now since $u$ belongs to $C([0,T],W^{4\xi}_{2,\mathcal{B}}(D))$, we can 
repeat arguments quite similar to those above to conclude that 
\begin{equation*}
	E_e(u(t_1)) \longrightarrow E_e(u^0)< \infty \; 
	\text{ as } \;  t_1 \rightarrow 0; 
\end{equation*}
hence \eqref{electrostaticE_2} also holds true for $t_1=0$.

Altogether we have verified that 
\begin{align*}
	E(u(t_2)) - E(u(t_1)) 
	& = E_m(u(t_2)) -E_m(u(t_1)) -
	\lambda (E_e(u(t_2)) - E_e(u(t_1)))
	\nonumber \\
	& = \int_{t_1}^{t_2} \int_D (\beta \Delta^2 u(s)- \tau 
	\Delta u(s) + \lambda g(u(s)))\partial_t u(s)\, \rd x\rd s 
	\nonumber \\
	& = - \int_{t_1}^{t_2} \Vert \partial_t u(s)\Vert_{L_2(D)}^2\, \rd s,
	\quad 0\leq t_1 \leq t_2 \leq T, 
\end{align*}
where in the last step we used equation \eqref{evolution1}, and this completes the proof of Theorem ~\ref{Tenergyequality}. 
\qed
\medskip

The energy equality \eqref{energyequality} provides a crucial step in the proof of the refined global existence criterion stated in Theorem \ref{globalcriterion}, which we shall discuss in the next section.

\section{Refined Criterion for Global Existence: Proof of Theorem \ref{globalcriterion}} 
\label{rc}
In this section, we improve the global existence criterion in part (ii) of Theorem  \ref{wellposedness} by showing that $u$ cannot blow up in $W^{4\xi}_{2,\mathcal{B}}(D)$ in finite time, thus
touchdown of $u$ on the ground plate is the only possible finite time singularity. To this end, we need slightly more regularity on the boundary of $D$, namely, $\partial D \in C^{4,\gamma}$. 

The rest of this section is devoted to the proof of Theorem \ref{globalcriterion}. The proof follows
the lines of \cite{LaurencotWalker2018}, with some modifications to account for the hinged boundary 
conditions \eqref{evolution2}. Note that the second statement in the theorem is obtained by 
applying the first one to an arbitrary $T_0>0$. 
 
From now on, $(u, \psi_u)$ is the solution to \eqref{evolution1}-\eqref{evolution5} enjoying 
the regularity \eqref{regularity.1}-\eqref{regularity.2} and satisfying the lower bound \eqref{singul1}. We aim at proving that 
\begin{equation}
	\label{singul2}
	\Vert u(t) \Vert_{W^{4 \xi}_{2, \mathcal{B}}(D)} \leq c_2(\rho_0,T_0), \quad t \in [0,T_m)\cap [0,T_0],
\end{equation}
so that Theorem \ref{wellposedness} (ii) in turn yields Theorem \ref{globalcriterion}. 

Set for $(t,x) \in [0,T_m)\times D$
\begin{equation*}
	G(u(t))(x) := ( 1 + \varepsilon^2 \vert \nabla u(t,x)  \vert^2) (\partial_z \psi_{u(t)}(x,u(t,x)))^2
\end{equation*}
and recall that thanks to identity \eqref{kidentity}, the right-hand side of equation \eqref{evolution1}
equals to $- \lambda G(u(t))(x)$. Owing to \eqref{singul1} and the continuous embedding of 
$W^{4\xi}_2(D)$ in $W^2_3(D)$, Proposition \ref{ellipticp} ensures that $G(u(t))$ belongs to $L_2(D)$ for $t \in [0,T_m)\cap [0,T_0]$. Moreover, arguing exactly as in 
\cite[Corollary 3.5]{LaurencotWalker2018} ($v=u(t)$) it holds for $t \in [0,T_m)\cap [0,T_0]$
\begin{equation}
\label{coroA}
	\Vert G(u(t)) \Vert_{L_1(D)} \leq \bigg( 4 + \frac{2}{\rho_0^2}\bigg) \vert D \vert + 
	4 \varepsilon^2 \Vert \nabla u(t)\Vert^2_{L_2(D)} .
\end{equation}
Namely, the $L_1$-norm of $G(u(t))$ is controlled by the $H^1$-norm of $u(t)$. Noticing that the 
electrostatic energy $E_e$ defined in \eqref{energye} is exactly the same as in \cite{LaurencotWalker2018}, an 
application of \cite[Lemma 3.6]{LaurencotWalker2018} ($v=u(t)$) shows that 
\begin{align*}
	E(u(t))&= E_m(u(t))-\lambda E_e(u(t)) 
	\\
	&= E_m(u(t)) -\lambda \vert D \vert + 
	\lambda \int_D u(t) ( 1+ \varepsilon^2 \vert \nabla u(t)  \vert^2) \, \partial_z \psi_{u(t)}(\cdot, u(t))\, \rd x
\end{align*}
for $t \in [0,T_m)\cap [0,T_0]$. Estimating the last term on the right-hand side as in the 
proof of \cite[Corollary 3.7]{LaurencotWalker2018} ($v=u(t)$) gives the following lower bound on the total energy $E$: 
\begin{equation}
	\label{coroB}
	E(u(t)) \geq  E_m(u(t)) - 3 \lambda \varepsilon^2\Vert \nabla u(t)\Vert^2_{L_2(D)}  
	- \lambda  \bigg( 4 + \frac{1}{2\rho_0^2}\bigg) \vert D \vert	
\end{equation}
for $t \in [0,T_m)\cap [0,T_0]$. Henceforth we shall denote by $c_2$ a positive constant which 
may vary from line to line and depends only on $\rho_0$, $T_0$, $u^0$, $\beta$, 
$\varepsilon$, $\lambda$, and $D$. In particular, $c_2$ does not depend on $T_m$. 

We first establish an $L^2$-bound on $u(t)$. 

\begin{lemma}
	\label{lemmC}
	There is $c_2>0$ such that 
	\begin{equation*}
		\Vert u(t) \Vert_{L_2(D)} \leq c_2, \quad  t \in [0,T_m)\cap [0,T_0].
	\end{equation*}
\end{lemma}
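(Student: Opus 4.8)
The plan is to test the evolution equation \eqref{evolution1} with $u(t)$ itself, to exploit the hinged boundary conditions \eqref{evolution2} together with the structure \eqref{energym} of the mechanical energy, and then to close the resulting differential inequality by Gronwall's lemma; only the $L_1$-estimate \eqref{coroA} (not the energy bound \eqref{coroB}) is needed here. For $t\in(0,T_m)\cap(0,T_0]$ the regularity \eqref{regularity.1} guarantees that $[t\mapsto\|u(t)\|_{L_2(D)}^2]$ is continuously differentiable on $(0,T_m)$, and since $u(t)\in W^4_{2,\mathcal{B}}(D)$ solves \eqref{CauchyProblem} for $t>0$ with right-hand side $-\lambda g(u(t))=-\lambda G(u(t))$ (recall \eqref{kidentity}), I would compute
\begin{equation*}
	\tfrac12\,\tfrac{\rd}{\rd t}\|u(t)\|_{L_2(D)}^2 = \int_D u(t)\,\partial_t u(t)\,\rd x = -\beta\int_D u(t)\,\Delta^2 u(t)\,\rd x + \tau\int_D u(t)\,\Delta u(t)\,\rd x - \lambda\int_D u(t)\,G(u(t))\,\rd x.
\end{equation*}
The first step is then the identity
\begin{equation*}
	-\beta\int_D v\,\Delta^2 v\,\rd x + \tau\int_D v\,\Delta v\,\rd x = -2\,E_m(v), \qquad v\in W^4_{2,\mathcal{B}}(D),
\end{equation*}
which I would obtain by integrating by parts twice: since $v=0$ and $\Delta v=(1-\sigma)\kappa\,\partial_\nu v$ on $\partial D$, this leaves the boundary contribution $\beta(1-\sigma)\int_{\partial D}\kappa\,(\partial_\nu v)^2\,\rd\omega$, which, in contrast to the clamped case, does not vanish; rewriting it via the identity in \cite[Lemma A.1]{SweersVassi2018} exactly as in the proof of Lemma \ref{spectrumSG} and recombining with \eqref{energym}, the boundary term and the $\tau$-contributions cancel and precisely $-2E_m(v)$ remains.

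It then remains to estimate the right-hand side. Arguing once more as in the proof of Lemma \ref{spectrumSG} (via \cite[Lemma A.1]{SweersVassi2018} and Young's inequality) there is a constant $c=c(\beta,\sigma)>0$ with $E_m(v)\geq c\,\|\Delta v\|_{L_2(D)}^2\geq 0$ for all $v\in W^4_{2,\mathcal{B}}(D)$, so that $-2E_m(u(t))\leq-2c\,\|\Delta u(t)\|_{L_2(D)}^2$. For the electrostatic term, $G(u(t))\geq0$ together with the lower bound \eqref{singul1} gives $-u(t,x)\in(0,1)$ on $\{u(t,\cdot)<0\}$, whence $-u(t)\,G(u(t))\leq G(u(t))$ there and $-u(t)\,G(u(t))\leq0$ on $\{u(t,\cdot)\geq0\}$; combined with \eqref{coroA} this yields
\begin{equation*}
	-\lambda\int_D u(t)\,G(u(t))\,\rd x \leq \lambda\,\|G(u(t))\|_{L_1(D)} \leq \lambda\big(4+\tfrac{2}{\rho_0^2}\big)|D| + 4\lambda\varepsilon^2\,\|\nabla u(t)\|_{L_2(D)}^2.
\end{equation*}
Finally, integrating by parts and using $u(t)=0$ on $\partial D$ gives $\|\nabla u(t)\|_{L_2(D)}^2=-\int_D u(t)\,\Delta u(t)\,\rd x\leq\|u(t)\|_{L_2(D)}\,\|\Delta u(t)\|_{L_2(D)}$, and Young's inequality with a suitably small parameter absorbs $4\lambda\varepsilon^2\|\nabla u(t)\|_{L_2(D)}^2$ into $2c\,\|\Delta u(t)\|_{L_2(D)}^2$ plus a constant multiple of $\|u(t)\|_{L_2(D)}^2$. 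Collecting these bounds, with all constants depending only on $\rho_0$, $\beta$, $\varepsilon$, $\sigma$, $\tau$, $\lambda$, $D$ (hence not on $T_m$), I arrive at
\begin{equation*}
	\tfrac{\rd}{\rd t}\|u(t)\|_{L_2(D)}^2 \leq c_2 + c_2\,\|u(t)\|_{L_2(D)}^2, \qquad t\in(0,T_m)\cap(0,T_0].
\end{equation*}

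Integrating this differential inequality over $[s,t]$ for $0<s<t$, applying Gronwall's lemma, and letting $s\to0^+$ — which is legitimate because $u\in C([0,T_m),W^{4\xi}_{2,\mathcal{B}}(D))\hookrightarrow C([0,T_m),L_2(D))$, so that $\|u(s)\|_{L_2(D)}\to\|u^0\|_{L_2(D)}$ — yields $\|u(t)\|_{L_2(D)}^2\leq(1+\|u^0\|_{L_2(D)}^2)\,e^{c_2T_0}$ for all $t\in[0,T_m)\cap[0,T_0]$, which is the claim. I expect the main obstacle — and the only place where the hinged boundary conditions genuinely intervene — to be the bookkeeping in the first step: carrying the curvature boundary term $\beta(1-\sigma)\int_{\partial D}\kappa\,(\partial_\nu u)^2\,\rd\omega$ through the integrations by parts, rewriting it by \cite[Lemma A.1]{SweersVassi2018}, and verifying that it recombines exactly into $-2E_m(u)$, so that it enters the estimate with a favourable sign. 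A secondary point is that the absorption of $\|\nabla u\|_{L_2(D)}^2$ must be carried out through $\|u\|_{L_2(D)}\,\|\Delta u\|_{L_2(D)}$ rather than through a Poincaré-type bound $\|\nabla u\|_{L_2(D)}^2\lesssim\|\Delta u\|_{L_2(D)}^2$, so that the argument works for every $\lambda>0$ and not only for small $\lambda$.
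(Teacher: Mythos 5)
Your argument is correct and coincides, step by step, with the paper's own proof: testing \eqref{evolution1} with $u(t)$, integrating by parts to produce the curvature boundary term, rewriting it via \cite[Lemma A.1]{SweersVassi2018} so that exactly $2E_m(u(t))$ appears, bounding the electrostatic term by $\lambda\|G(u(t))\|_{L_1(D)}$ using $u>-1$ and $G\geq 0$, controlling $\|G(u(t))\|_{L_1}$ through \eqref{coroA} together with the interpolation $\|\nabla u\|_{L_2}^2\lesssim\|u\|_{L_2}\|\Delta u\|_{L_2}$, absorbing via the coercivity $E_m\gtrsim\|\Delta u\|_{L_2}^2$ and Young's inequality, and closing with Gronwall. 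There is no gap; the only cosmetic difference is that the paper phrases the Young step as $\|G\|_{L_1}\leq\frac1\lambda E_m+c_2(1+\|u\|_{L_2}^2)$ rather than absorbing $\|\nabla u\|_{L_2}^2$ directly into the Laplacian term, but these are the same estimate.
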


\begin{proof}
Multiplying equation \eqref{evolution1} by $u(t)$ and integrating over D gives, for $t \in (0,T_m)$, 
\begin{equation*}
\frac{1}{2}\frac{\rd}{\rd t} \int_D u(t)^2 \, \rd x +  \int_D  u(t) \big( \beta \Delta^2 u(t) - \tau  \Delta u(t) \big) \,  \rd x  = - \lambda \int_D  u(t) G(u(t)) \, \rd x.
\end{equation*}
Applying Green's formula twice and using the boundary conditions \eqref{evolution2} leads to 
\begin{align*}
	&\frac{1}{2}\frac{\rd}{\rd t} \int_D u(t)^2 \, \rd x +  \int_D   \big( \beta (\Delta u(t))^2 + \tau  \vert \nabla u(t)\vert^2  \big) \,  \rd x  - \beta (1-\sigma) \int_{\partial D} \kappa (\partial_{\nu} u(t))^2 \, \rd\omega 
	\\
	&\,= - \lambda \int_D  u(t) G(u(t)) \, \rd x,
\end{align*}
and \cite[Lemma A.1]{SweersVassi2018} further entails that 
\begin{equation*}
	\frac{1}{2}\frac{\rd}{\rd t} \int_D u(t)^2 \, \rd x +  2E_m(u(t))  
	= - \lambda \int_D  u(t) G(u(t)) \, \rd x, \quad t \in (0,T_m).
\end{equation*}
Hence, since $u(t)>-1$ and $G(u(t)) \geq 0$ in $D$, it follows that 
\begin{equation}
\label{lemmC_3}
\frac{1}{2} \frac{\rd}{\rd t} \Vert u(t)\Vert_{L_2(D)}^2 + 2 E_m(u(t))
 \leq \lambda \Vert G(u(t)) \Vert_{L_1(D)}, \quad t \in (0, T_m).
\end{equation}
On the other hand, from \eqref{coroA} and interpolation we infer that, for $t \in [0, T_m)\cap 
[0,T_0]$, 
\begin{equation}
	\label{lemmC_5}
	\Vert G(u(t)) \Vert_{L_1(D)} \leq c_2 \big(1 +\Vert \nabla u(t) \Vert^2_{L_2(D)} \big) 
	\leq c_2 \big( 1+ \Vert \Delta u(t) \Vert_{L_2(D)} \Vert u(t) \Vert_{L_2(D)}\big).
\end{equation}
Because the inequality 
\begin{equation*}
	\frac{1}{2} (\Delta u(t))^2 \leq \sum_{i,j=1}^2  (\partial_{x_i}\partial_{x_j} u(t))^2 \; \text{ in }\; D
\end{equation*}
implies 
\begin{align}
\label{lemmC_6}
	2 E_m(u(t))&= \beta \int_D \bigg(  (\Delta u(t))^2 + (1-\sigma) \Big[ \sum_{i,j=1}^2  (\partial_{x_i}\partial_{x_j} u(t))^2 -(\Delta u(t))^2  \Big]\bigg) \, \rd x 
	\nonumber \\
	& \quad + \tau \int_D \vert \nabla u(t) \vert^2 \, \rd x
	\nonumber \\
	& \geq \frac{\beta (1+\sigma)}{2}  \Vert \Delta u(t) \Vert^2_{L_2(D)}
\end{align}
for $t \in [0,T_m)$, we obtain from \eqref{lemmC_5} using Young's inequality 
\begin{equation}
\label{lemmC_7}
\Vert G(u(t)) \Vert_{L_1(D)} \leq \frac{1}{\lambda} E_m(u(t)) + c_2 \big(1 + \Vert  u(t) \Vert^2_{L_2(D)} \big), \quad t \in [0, T_m)\cap [0,T_0].
\end{equation}
Now, combining the inequalities \eqref{lemmC_3} and \eqref{lemmC_7} yields 
\begin{equation*}
	\frac{1}{2} \frac{\rd}{\rd t}  \Vert u(t)\Vert_{L_2(D)}^2  + E_m(u(t)) \leq c_2 \big(1 + 
	\Vert  u(t) \Vert^2_{L_2(D)} \big) , \quad t \in (0, T_m)\cap (0,T_0],
\end{equation*}
and hence $\tfrac{1}{2}  \tfrac{\rd}{\rd t} \Vert u(t)\Vert_{L_2(D)}^2 \leq c_2 \big (1 + 
\Vert  u(t) \Vert^2_{L_2(D)} \big)$. Solving this differential inequality, we conclude the assertion. 
\end{proof}

The next result shows that the mechanical energy is only controlled by the total energy. 
\begin{lemma}
	\label{lemmD}
There is $c_2 >0$ such that 
	\begin{equation*}
	E(u(t)) \geq  \frac{1}{2}  E_m(u(t)) - c_2,  \quad t \in [0, T_m)\cap [0,T_0].
	\end{equation*}
\end{lemma}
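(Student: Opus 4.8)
The plan is to start from the lower bound \eqref{coroB} and absorb the gradient term into $\tfrac{1}{2}E_m(u(t))$ at the cost of an additive constant, invoking the coercivity estimate \eqref{lemmC_6}, an elementary interpolation, and the $L_2$-bound just established in Lemma \ref{lemmC}.

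First I would recall that \eqref{coroB} reads
\[
	E(u(t)) \geq E_m(u(t)) - 3\lambda\varepsilon^2 \Vert \nabla u(t)\Vert^2_{L_2(D)}
	- \lambda\Big(4 + \tfrac{1}{2\rho_0^2}\Big) \vert D\vert
\]
for $t \in [0,T_m)\cap[0,T_0]$, so the only term needing attention is $3\lambda\varepsilon^2 \Vert\nabla u(t)\Vert^2_{L_2(D)}$. Since $u(t) \in W^{4\xi}_{2,\mathcal{B}}(D)$ satisfies $u(t)=0$ on $\partial D$, integrating by parts gives $\Vert\nabla u(t)\Vert^2_{L_2(D)} = -\int_D u(t)\,\Delta u(t)\,\rd x \leq \Vert u(t)\Vert_{L_2(D)}\,\Vert\Delta u(t)\Vert_{L_2(D)}$, and Lemma \ref{lemmC} bounds $\Vert u(t)\Vert_{L_2(D)}$ by $c_2$, so that $\Vert\nabla u(t)\Vert^2_{L_2(D)} \leq c_2 \Vert\Delta u(t)\Vert_{L_2(D)}$.

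Next I would use \eqref{lemmC_6}, which in particular shows both that $E_m(u(t)) \geq 0$ and that $\Vert\Delta u(t)\Vert^2_{L_2(D)} \leq \tfrac{4}{\beta(1+\sigma)}E_m(u(t))$, hence $\Vert\Delta u(t)\Vert_{L_2(D)} \leq c_2 \big(E_m(u(t))\big)^{1/2}$. Combining with the previous step gives $3\lambda\varepsilon^2\Vert\nabla u(t)\Vert^2_{L_2(D)} \leq c_2\big(E_m(u(t))\big)^{1/2}$, and Young's inequality yields $c_2\big(E_m(u(t))\big)^{1/2} \leq \tfrac{1}{2} E_m(u(t)) + c_2$. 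Substituting into \eqref{coroB} and renaming the resulting constant produces the claimed inequality $E(u(t)) \geq \tfrac{1}{2} E_m(u(t)) - c_2$ for $t \in [0,T_m)\cap[0,T_0]$.

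There is no genuine obstacle here: the argument is a short chain of standard inequalities. The two points that require a little care are (a) ensuring $E_m(u(t)) \geq 0$ before taking square roots and applying Young's inequality, which is exactly what \eqref{lemmC_6} provides, and (b) checking that the constant furnished by Lemma \ref{lemmC} is uniform over $t \in [0,T_m)\cap[0,T_0]$ and independent of $T_m$, so that the final constant $c_2$ inherits the same property — both are already built into the statements being invoked.
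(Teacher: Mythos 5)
Your proof is correct and follows essentially the same route as the paper: both arguments start from \eqref{coroB}, use integration by parts (interpolation) to write $\Vert\nabla u(t)\Vert_{L_2(D)}^2 \leq \Vert u(t)\Vert_{L_2(D)}\Vert\Delta u(t)\Vert_{L_2(D)}$, invoke \eqref{lemmC_6} to control $\Vert\Delta u(t)\Vert_{L_2(D)}$ by $E_m(u(t))^{1/2}$, apply Young's inequality, and conclude with Lemma \ref{lemmC}. The only difference is the cosmetic one of when Lemma \ref{lemmC} is invoked (you use it before Young's inequality, the paper after), which does not change the substance of the argument.
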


\begin{proof}
The lower bound \eqref{coroB} on $E$ together with interpolation, \eqref{lemmC_6}, and 
Young's inequality gives 
\begin{align*}
E(u(t)) &\geq  E_m(u(t)) - c_2 E_m(u(t))^{\frac{1}{2}} \Vert u(t)\Vert_{L_2(D)} -c_2
\\
& \geq \frac{1}{2} E_m(u(t))- c_2 \Vert u(t)\Vert_{L_2(D)}^2 -c_2
\end{align*}
for $t \in [0, T_m)\cap [0,T_0]$. Lemma \ref{lemmC} finishes the proof.
\end{proof}

The energy equality \eqref{energyequality} now allows us to derive the following bound on the 
$L^2$-norm of ~$\Delta u(t)$. 

\begin{corollary}
	\label{coroE}
	There is $c_2>0$ such that 
	\begin{equation*}
		\frac{\beta (1+\sigma)}{8}  \Vert \Delta u(t)\Vert^2_{L_2(D)} + 
		\int_0^t \Vert \partial_t u(s)\Vert^2_{L_2(D)} \, \rd s \leq c_2, \quad t \in [0, T_m)\cap [0,T_0].
	\end{equation*}
\end{corollary}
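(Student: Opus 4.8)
The plan is to combine the energy equality of Theorem~\ref{Tenergyequality} with the two lower bounds already at our disposal, namely Lemma~\ref{lemmD} for the total energy $E(u(t))$ and the coercivity estimate \eqref{lemmC_6} for the mechanical energy $E_m(u(t))$. Applying \eqref{energyequality} on $[0,t]$ gives
\[
\int_0^t \Vert \partial_t u(s)\Vert_{L_2(D)}^2\,\rd s = E(u^0) - E(u(t)), \qquad t\in [0,T_m)\cap[0,T_0],
\]
and since $u^0\in W^{4\xi}_{2,\mathcal{B}}(D)$ with $u^0>-1$ in $D$ (so that $u^0\in \overline{S}_3(\rho)$ for some $\rho\in(0,1)$ and $\psi_{u^0}\in W^2_2(\Omega(u^0))$ by Proposition~\ref{ellipticp}), the number $E(u^0)=E_m(u^0)-\lambda E_e(u^0)$ is finite and may be absorbed into $c_2$.

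First I would invoke Lemma~\ref{lemmD} to write $-E(u(t)) \le -\tfrac12 E_m(u(t)) + c_2$ on $[0,T_m)\cap[0,T_0]$, which combined with the identity above yields
\[
\int_0^t \Vert \partial_t u(s)\Vert_{L_2(D)}^2\,\rd s + \tfrac12 E_m(u(t)) \le E(u^0) + c_2 \le c_2 .
\]
Then I would use the estimate \eqref{lemmC_6}, which already records $2E_m(u(t)) \ge \tfrac{\beta(1+\sigma)}{2}\Vert \Delta u(t)\Vert_{L_2(D)}^2$, hence $\tfrac12 E_m(u(t)) \ge \tfrac{\beta(1+\sigma)}{8}\Vert \Delta u(t)\Vert_{L_2(D)}^2$. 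Substituting this into the previous display and dropping the (nonnegative) remaining part of $\tfrac12 E_m(u(t))$ gives exactly the asserted bound.

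The argument is immediate once Lemmas~\ref{lemmC}--\ref{lemmD} and the energy equality are in place, so there is no genuine obstacle here; the estimate of Corollary~\ref{coroE} is essentially a repackaging of Theorem~\ref{Tenergyequality}. The only point needing attention is the bookkeeping of the constant: one must check that the constants entering through $E(u^0)$, through the interpolation and Young's inequality steps hidden in Lemma~\ref{lemmD}, and through the $L_2$-bound of Lemma~\ref{lemmC}, all depend only on $\rho_0$, $T_0$, $u^0$, $\beta$, $\varepsilon$, $\lambda$, and $D$, and in particular are independent of $T_m$, so that the bound holds uniformly on the whole interval $[0,T_m)\cap[0,T_0]$.
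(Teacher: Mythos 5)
Your argument is correct and follows exactly the same route as the paper: the energy equality of Theorem~\ref{Tenergyequality}, the lower bound from Lemma~\ref{lemmD}, the coercivity estimate \eqref{lemmC_6}, and the finiteness of $E(u^0)$. The paper simply states the combination in two displays rather than rearranging to isolate the dissipation term first, but the ingredients and their use are identical.
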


\begin{proof}
	According to Theorem \ref{Tenergyequality}, we have 
	\begin{equation*}
		E(u(t)) + \int_0^t \Vert \partial_t u(s)\Vert_{L_2(D)}^2 \, ds = E(u^0), \quad t \in [0,T_m), 
	\end{equation*}
and by Lemma \ref{lemmD}, 
	\begin{equation*}
		E(u^0) \geq \frac{1}{2} E_m(u(t)) - c_2 + 
		\int_0^t \Vert \partial_t u(s)\Vert_{L_2(D)}^2 \, \rd s , \quad t \in [0, T_m)\cap [0,T_0].
	\end{equation*}
The assertion follows from \eqref{lemmC_6} and the fact that $E(u^0) < \infty$.
\end{proof}

Putting together the above results we finally obtain an upper bound on the $L^1$-norm of the 
right-hand side of \eqref{evolution1}. 

\begin{corollary}
	\label{coroF}
	There is $c_2>0$ such that 
	\begin{equation*}
		\Vert G(u(t)) \Vert_{L_1(D)} \leq c_2, \quad t \in [0, T_m)\cap [0,T_0].
	\end{equation*}
\end{corollary}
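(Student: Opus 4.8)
The plan is to combine the bound \eqref{coroA}, which controls $\|G(u(t))\|_{L_1(D)}$ by $\|\nabla u(t)\|_{L_2(D)}^2$, with the freshly obtained energy-type bounds on $u(t)$. Specifically, \eqref{coroA} reads
\begin{equation*}
	\Vert G(u(t)) \Vert_{L_1(D)} \leq \bigg( 4 + \frac{2}{\rho_0^2}\bigg) \vert D \vert +
	4 \varepsilon^2 \Vert \nabla u(t)\Vert^2_{L_2(D)}, \quad t \in [0,T_m)\cap [0,T_0],
\end{equation*}
so it suffices to bound $\|\nabla u(t)\|_{L_2(D)}^2$ uniformly on $[0,T_m)\cap[0,T_0]$ by a constant depending only on the admissible data. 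First I would interpolate: by the Gagliardo--Nirenberg inequality one has $\|\nabla u(t)\|_{L_2(D)}^2 \leq C \|\Delta u(t)\|_{L_2(D)} \|u(t)\|_{L_2(D)}$ (using that $u(t)$ vanishes on $\partial D$, so the full $H^2$-norm is controlled by $\|\Delta u(t)\|_{L_2(D)}$). Then I would invoke Lemma~\ref{lemmC} to bound $\|u(t)\|_{L_2(D)} \leq c_2$ and Corollary~\ref{coroE} to bound $\|\Delta u(t)\|_{L_2(D)}^2 \leq c_2$, both on $[0,T_m)\cap[0,T_0]$. Feeding these into the interpolation inequality gives $\|\nabla u(t)\|_{L_2(D)}^2 \leq c_2$, and substituting back into \eqref{coroA} yields the claimed bound $\|G(u(t))\|_{L_1(D)} \leq c_2$.

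There is essentially no obstacle here: every ingredient has already been assembled in the preceding lemmas and corollaries, and the argument is a one-line chaining of estimates. The only point requiring a touch of care is to make sure the interpolation constant and the constants from Lemma~\ref{lemmC} and Corollary~\ref{coroE} all depend only on $\rho_0$, $T_0$, $u^0$, $\beta$, $\varepsilon$, $\lambda$, and $D$ (and not on $T_m$), which is guaranteed by the standing convention on $c_2$ fixed just before Lemma~\ref{lemmC}. Alternatively, one could bypass the interpolation step entirely by using Young's inequality as in \eqref{lemmC_5}--\eqref{lemmC_7} to write $\|\nabla u(t)\|_{L_2(D)}^2 \leq c_2 + c_2\|\Delta u(t)\|_{L_2(D)}^2 + c_2\|u(t)\|_{L_2(D)}^2$ directly, but the interpolation route is cleaner given what is already proved.

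I would therefore carry out the proof in the following order: (1) state that by \eqref{coroA} it suffices to bound $\|\nabla u(t)\|_{L_2(D)}^2$; (2) apply interpolation to get $\|\nabla u(t)\|_{L_2(D)}^2 \leq c_2\|\Delta u(t)\|_{L_2(D)}\|u(t)\|_{L_2(D)}$; (3) insert the bounds from Lemma~\ref{lemmC} and Corollary~\ref{coroE}; (4) conclude via \eqref{coroA}. This is short enough that the full proof and the plan coincide, so the ``main obstacle'' is really just bookkeeping of constants rather than any genuine difficulty.
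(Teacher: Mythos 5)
Your proof is correct, but it takes a slightly different route than the paper's. The paper deduces from Grisvard's elliptic regularity theorem \cite[Theorem 3.1.2.1]{Grisvard1985} (valid since $D$ is convex) together with Corollary~\ref{coroE} and Lemma~\ref{lemmC} that $\Vert u(t)\Vert_{W^2_2(D)}\leq c_2$, and then applies \eqref{coroA}. You instead bypass full elliptic regularity and use the elementary interpolation
\begin{equation*}
\Vert \nabla u(t)\Vert_{L_2(D)}^2 = -\int_D u(t)\,\Delta u(t)\,\rd x \leq \Vert u(t)\Vert_{L_2(D)}\,\Vert\Delta u(t)\Vert_{L_2(D)},
\end{equation*}
which follows directly by integration by parts using $u(t)=0$ on $\partial D$, then feed in Lemma~\ref{lemmC} and Corollary~\ref{coroE}. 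This gives the needed gradient bound without invoking Grisvard's theorem (the convexity of $D$ is not used at this step in your argument), so it is marginally more economical; the paper's variant yields the stronger $W^2_2$ bound, which might be of independent interest. One small imprecision: your parenthetical ``so the full $H^2$-norm is controlled by $\|\Delta u(t)\|_{L_2(D)}$'' describes the Grisvard result rather than the integration-by-parts identity you actually use — the inequality $\|\nabla u\|_{L_2}^2 \leq \|u\|_{L_2}\|\Delta u\|_{L_2}$ needs only the Dirichlet boundary condition, not any $H^2$-regularity theory. Either way, the conclusion is correct.
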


\begin{proof}
By \cite[Theorem 3.1.2.1]{Grisvard1985} (since $D$ is convex) and Corollary \ref{coroE}, we deduce that 
\begin{equation*}
	\Vert u(t) \Vert^2_{W^2_2(D)} \leq c_2, \quad t  \in [0, T_m)\cap [0,T_0].
\end{equation*}
Now \eqref{coroA} yields the assertion.
\end{proof}

All that is left to prove is that the bound from Corollary \ref{coroF} 
implies a bound on 
 $u(t)$ in $W^{4\xi}_{2,\mathcal{B}}(D)$, i.e. inequality \eqref{singul2}. 
 
 So far we have not used the higher regularity of the boundary $\partial D$, but we do so now. 
 Recalling that $\partial D \in C^{4,\gamma}$, we introduce $B^s_{1,1,\mathcal{B}}(D)$ for 
 $s\in (-3-\gamma, 4+\gamma)\backslash\{1,3\}$, i.e. the Besov space $B^s_{1,1}(D)$ incorporating 
 the boundary conditions \eqref{evolution2}, if meaningful:
 \begin{equation*}
 	B^{s}_{1,1,\mathcal{B}}(D):= 
 	\left\{\begin{array}{ll}
 		\vspace{0.1cm}
 		B^{s}_{1,1}(D),  &s\in (-3-\gamma,1),\\ 
 		\vspace{0.1cm}
 		\left\{ v\in  B^{s}_{1,1}(D)\, : \, v=0 \text{ on } \partial D\right\},
 		&s \in (1,3),\\
 		\left\{v\in  B^{s}_{1,1}(D)\, : \, v = \Delta v -(1 - \sigma) \kappa
 		\partial_{\nu}v=0 \text{ on } \partial D\right\},
 		&s\in (3,4+\gamma) .
 	\end{array} \right.
 \end{equation*} 
From now on, we assume $4\xi \in ( \tfrac{5}{2}, \tfrac{7}{2})$ and fix $\alpha \in (4\xi - \tfrac{7}{2},0)$.  The cases of 
$4\xi \in ( \tfrac{7}{3}, \tfrac{5}{2})$ and $4\xi \in (\tfrac{7}{2}, 4)$ are treated the same way. The constant $c_2$ is now allowed to depend also on $\xi$ and $\alpha$ (but still not on $T_m$); dependence on additional parameters is explicitly indicated. 

Let us first check that the operator $-A$, given by 
\begin{equation*}
	-A v:= -(\beta \Delta^2  -\tau \Delta )v, \quad v \in B^{4+\alpha}_{1,1,\mathcal{B}}(D),
\end{equation*}
generates a strongly continuous analytic semigroup ~$\{e^{-tA} \, : \, t\geq 0\}$ on 
$B^{\alpha}_{1,1}(D)$ satisfying the regularizing property stated in Lemma \ref{lemmG} below. 
Note that in Section \ref{wp} we have already shown that $-A$ restricted to $W^{4}_{2,\mathcal{B}}(D)$ generates a strongly continuous analytic 
semigroup ~$\{e^{-tA} \, : \, t\geq 0\}$ on $L_2(D)$ with 
\begin{equation}
\label{lemmG_0}
	\Vert e^{-tA} \Vert_{\mathcal{L}(W^{4\xi}_{2,\mathcal{B}}(D))} 
	\leq M,  \quad t\geq 0. 
\end{equation}
Arguing in a similar fashion we obtain the following result.  

\begin{lemma}
	\label{lemmG}
	It holds that 
	\begin{equation}
		\label{lemmG_1}
		A \in \mathcal{H}(B^{4+\alpha}_{1,1,B}(D),B^{\alpha}_{1,1}(D)). 
	\end{equation} 
	Moreover, given $\theta \in (0,1)$ with $\theta \notin \left\{ \tfrac{(1-\alpha)}{4}, \tfrac{(3-\alpha)}{4}\right\}$, 
	there is  $c_2(\theta)>0$  such that, for $t\in (0,T_0]$, 
	\begin{equation}
		\label{lemmG_2}
		\Vert e^{-tA} \Vert_{\mathcal{L}(B^{\alpha}_{1,1}(D), 
			B^{4\theta+\alpha}_{1,1,\mathcal{B}(D)})} \leq c_2(\theta) t^{-\theta}.
	\end{equation}
\end{lemma}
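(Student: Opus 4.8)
The plan is to mimic the arguments of Lemmas~\ref{aSemigroup}, \ref{spectrumSG}, and \ref{regularizingsemigroup}, now working in the $L_1$-based Besov scale instead of the $L_2$-based Sobolev scale; the price for this change of scale is precisely the slightly stronger hypothesis $\partial D \in C^{4,\gamma}$, which is needed to handle the non-integer smoothness indices involved.

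First I would record that the normal ellipticity of $(A,\mathcal{B})$ is a property of the principal symbols $a_\pi$ and $b_\pi$ alone and has therefore already been established in the proof of Lemma~\ref{aSemigroup}. Moreover, since $\partial D \in C^{4,\gamma}$ we have $\nu \in C^{3,\gamma}(\partial D)$ and $\kappa \in C^{2,\gamma}(\partial D)$, so the coefficients $(1-\sigma)\,i\,\kappa\,\nu_k$ of $\mathcal{B}_2$ belong to $C^{2,\gamma}(\partial D,\mathbb{C})$, which is the boundary-coefficient regularity required to run Amann's $L_p$-theory for a fourth-order boundary value problem on the full Besov scale. Invoking \cite[Remark~4.2(b)]{Amann1993} (together with its Besov-scale counterpart in \cite{Amann1995}) with $p=1$ and smoothness index $\alpha \in (4\xi-\tfrac{7}{2},0)\subset(-1,0)$ — so that $4+\alpha\in(3,4)$ and neither $\alpha$ nor $4+\alpha$ is an exceptional value — then yields \eqref{lemmG_1}, i.e.\ $-A$ generates a strongly continuous analytic semigroup $\{e^{-tA}\,:\,t\geq 0\}$ on $B^{\alpha}_{1,1}(D)$; by the consistency of the interpolation-extrapolation construction this semigroup restricts on $L_2(D)$ to the one built in Section~\ref{wp}.

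For the regularizing estimate \eqref{lemmG_2}, I would first identify the interpolation spaces between $B^{\alpha}_{1,1}(D)$ and $B^{4+\alpha}_{1,1,\mathcal{B}}(D)$. As in the proof of Lemma~\ref{regularizingsemigroup}, the system $\mathcal{B}=(\mathcal{B}_1,\mathcal{B}_2)$ is a normal system of boundary operators in the sense of \cite[Definition~4.3.3(1)]{Triebel1995}, so the Besov version of \cite[Theorem~4.3.3(a)]{Triebel1995} gives
$$\left(B^{\alpha}_{1,1}(D),\,B^{4+\alpha}_{1,1,\mathcal{B}}(D)\right)_{\theta,1}\;\doteq\;B^{4\theta+\alpha}_{1,1,\mathcal{B}}(D),\qquad 4\theta+\alpha\notin\{1,3\},$$
which, with $\alpha$ fixed, is exactly the restriction $\theta\notin\{(1-\alpha)/4,(3-\alpha)/4\}$ appearing in the statement (the value $4\theta+\alpha=2$ being harmless since only the condition $v=0$ on $\partial D$ is in force there). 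Setting $E_0:=B^{\alpha}_{1,1}(D)$, $E_1:=B^{4+\alpha}_{1,1,\mathcal{B}}(D)$, and $E_\theta$ the interpolation space above, I would then apply \cite[Theorem~V.2.1.3]{Amann1995} — whose hypotheses are met thanks to \eqref{lemmG_1} — to obtain $\Vert e^{-tA}\Vert_{\mathcal{L}(E_0,E_\theta)}\leq M\,t^{-\theta}$ for $t\in(0,T_0]$; on the bounded interval $(0,T_0]$ the exponential decay factor is harmless and is absorbed into the constant $c_2(\theta):=M(\theta)$, giving \eqref{lemmG_2}.

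The main obstacle I anticipate is bookkeeping rather than conceptual: one must check that the $L_1$-Besov realisation of the fourth-order hinged operator is genuinely covered by Amann's theory for the chosen range $\alpha\in(4\xi-\tfrac{7}{2},0)$ — in particular that the boundary-coefficient regularity $C^{2,\gamma}$ coming from $\partial D\in C^{4,\gamma}$ does suffice to reach negative smoothness indices $\alpha<0$ — and that the exceptional values (here $\{1,3\}$, with $\{2\}$ handled by the complex interpolation functor as in Lemma~\ref{regularizingsemigroup}) are excluded precisely so that the Besov spaces with boundary conditions coincide with the interpolation spaces. One should also explicitly record the consistency of the extrapolated semigroup on $B^{\alpha}_{1,1}(D)$ with the one on $L_2(D)$, since it is used implicitly when the variation-of-constants formula for $u$ is subsequently re-read in the $B^{\alpha}_{1,1}$-setting.
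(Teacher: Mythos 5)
Your overall plan matches the paper's: verify the generation condition for the $L_1$-Besov realization of $A$ by re-using the normal ellipticity and the Lopatinskii--Shapiro computation already done in Lemma~\ref{aSemigroup}, then characterize the real interpolation spaces between $B^{\alpha}_{1,1}(D)$ and $B^{4+\alpha}_{1,1,\mathcal{B}}(D)$ and invoke a semigroup smoothing estimate. Where you diverge is in the supporting references, and this is where a gap opens up. For the generation claim \eqref{lemmG_1} you appeal to ``\cite[Remark~4.2(b)]{Amann1993} together with its Besov-scale counterpart in \cite{Amann1995},'' but Amann's Remark~4.2(b) — the very result used in Lemma~\ref{aSemigroup} — is formulated for the reflexive scales $W^s_q$ with $1<q<\infty$, and it is not clear that it (or anything in \cite{Amann1995}) covers analytic generation on $B^{\alpha}_{1,1}(D)$ with $p=1$ \emph{and} negative smoothness $\alpha<0$. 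The paper instead cites \cite[Theorem~2.18]{Guidetti1989}, which is precisely a generation theorem in $B^s_{1,1}$ that accommodates $s\in(-1,1)$; verifying its hypotheses $(m)$, $(n)$, $(o)$ is exactly the transfer of the Lemma~\ref{aSemigroup} computations you describe, so you have correctly identified what must be checked but not the result that makes the check sufficient. Similarly, for the interpolation identity you invoke an unspecified ``Besov version'' of \cite[Theorem~4.3.3(a)]{Triebel1995}; the paper uses \cite[Proposition~4.13]{Guidetti1990}, which is the tailored characterization of $\bigl(B^{\alpha}_{1,1}(D),B^{4+\alpha}_{1,1,\mathcal{B}}(D)\bigr)_{\theta,1}$ including the correct exceptional set $4\theta\notin\{1-\alpha,3-\alpha\}$.

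Two small points of bookkeeping. First, your parenthetical remark that ``$4\theta+\alpha=2$ is harmless'' imports the exceptional set $\{1,2,3\}$ from the $L_2$-scale discussion in Lemma~\ref{regularizingsemigroup}, where the complex functor was needed at integer smoothness; in the $B^s_{1,1}$ scale used here the only exceptional values are the trace thresholds $s\in\{1,3\}$, and the paper uses only real interpolation $(\cdot,\cdot)_{\theta,1}$, so the remark is a red herring. Second, the paper concludes via \cite[Lemma~II.5.1.3]{Amann1995} rather than \cite[Theorem~V.2.1.3]{Amann1995}; on the bounded interval $(0,T_0]$ either yields the estimate \eqref{lemmG_2}, so this is immaterial. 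The substantive issue is only the missing reference to Guidetti's generation and interpolation results, which are exactly the tools that make the $p=1$, $\alpha<0$ Besov setting work.
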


\begin{proof}
To prove \eqref{lemmG_1}, we shall apply  \cite[Theorem 2.18]{Guidetti1989}. Note that $\alpha \in (4\xi - \tfrac{7}{2},0) \subset (-1,1)$. Let us verify that assumptions $(m)$, $(n)$, and $(o)$ of \cite[Theorem 2.18]{Guidetti1989} are satisfied. Assumptions  $(m)$ and $(n)$ follow identically 
to the proof of Lemma \ref{aSemigroup} (see \eqref{prsymbolA} and \eqref{boundaryOperator}). 
Assumption $(o)$ requires that, for any $x \in \partial D$, $\zeta \in \mathbb{R}^2$, $r\geq 0$ 
with $\zeta \cdot \nu(x)=0$ and $(\zeta, r) \neq (0,0)$, and any $\vartheta \in [-\tfrac{\pi}{2}, \tfrac{\pi}{2}]$, 
zero is the only bounded solution in $[0, \infty)$ to 
\begin{empheq}{align*}
\big(- \beta (\vert \zeta \vert^2 - \partial_t^2 )^2 - r e^{i\vartheta}\big)v &=0, 
	 \\
	v(0) = \partial_t^2 v(0) &=0.
\end{empheq}
This has also been proved in Lemma \ref{aSemigroup} (see \eqref{Lopa.1}-\eqref{Lopa.2}). 
Consequently, we are able to apply \cite[Theorem 2.18]{Guidetti1989}. 

It remains to prove \eqref{lemmG_2}. From \cite[Proposition 4.13]{Guidetti1990} we infer that 
\begin{equation*}
	\bigl(B^{\alpha}_{1,1}(D),B^{4+\alpha}_{1,1,\mathcal{B}}(D)\bigr)_{\theta,1} 
	\doteq B^{4\theta+\alpha}_{1,1,\mathcal{B}}(D), \quad 4\theta \in (0,4) \backslash \{1-\alpha, 3-\alpha\},
\end{equation*}
where $(\cdot,\cdot)_{\theta,1}$ is the real interpolation functor. Then \cite[Lemma II.5.1.3]{Amann1995} implies \eqref{lemmG_2}.
\end{proof}
\medskip

We are now ready to prove Theorem \ref{globalcriterion}. \\

\noindent{\textit{Proof of Theorem \ref{globalcriterion}}.} First, we have the following conitinuous 
embeddings, due to ~\cite[Section 5]{Amann1993} (or ~\cite[Section 4]{Guidetti1993}): 
\begin{equation*}
	B^{4+\alpha}_{1,1,\mathcal{B}}(D) \hookrightarrow B^{s}_{1,1,\mathcal{B}}(D) 
	\hookrightarrow B^{0}_{1,1}(D) \hookrightarrow L_1(D) \hookrightarrow B^{0}_{1,\infty}(D)
	\hookrightarrow B^{\alpha}_{1,1}(D), \; s \in (0,4+\alpha)\backslash\{1,3\}, 
\end{equation*}
which, together with Corollary \ref{coroF}, yield 
\begin{equation}
	\label{refinedcriterion_1}
	\Vert G(u(t)) \Vert_{B^{\alpha}_{1,1}(D)} \leq c_2, \quad t \in [0, T_m)\cap [0,T_0].
\end{equation}
We next fix $\theta \in (0,1)$ and $4\xi_1\in (4\xi,4)\backslash\{3\}$ so that 
\begin{equation*}
	4\theta + \alpha > 4 \xi_1 +1 > 4\xi +1 
\end{equation*}
and hence, by \cite[Section 5]{Amann1993}, we get
\begin{equation}
	\label{refinedcriterion_2}
	B^{4\theta+\alpha}_{1,1,\mathcal{B}}(D) \hookrightarrow B^{4\xi_1}_{2,2,\mathcal{B}}(D) 
	\doteq W^{4\xi_1}_{2,\mathcal{B}}(D) \hookrightarrow W^{4\xi}_{2,\mathcal{B}}(D).
\end{equation}
Using the variation of constants formula
\begin{equation*}
	u(t)=e^{-tA} u^0 - \lambda \int_0^t e^{-(t-s)A} G(u(s)) \, \rd s,\quad t\in [0,T_m),
\end{equation*}
we derive from \eqref{lemmG_0}, \eqref{lemmG_2}, \eqref{refinedcriterion_1}, and  \eqref{refinedcriterion_2} that
\begin{align*}
\Vert u(t)\Vert_{W^{4\xi}_{2,\mathcal{B}}(D)} 
	&\leq \Vert e^{-tA}u^0\Vert_{W^{4\xi}_{2,\mathcal{B}}(D)} + \lambda \int_0^t 
	\Vert e^{-(t-s)A} G(u(s))\Vert_{W^{4\xi}_{2,\mathcal{B}}(D)} \, \rd s
	\nonumber\\
	& \leq \Vert e^{-tA}\Vert_{\mathcal{L}(W^{4\xi}_{2,\mathcal{B}}(D))} 
	\Vert u^0\Vert_{W^{4\xi}_{2,B}(D)} +\lambda c_{B,W}
	\int_0^t \Vert e^{-(t-s)A}G(u(s))\Vert_{B^{4\theta+\alpha}_{1,1,\mathcal{B}}(D)} \, \rd s
	\nonumber\\
	& \leq  M \Vert u^0\Vert_{W^{4\xi}_{2,\mathcal{B}}(D)} + \lambda c_{B,W} \int_0^t 
	\Vert e^{-(t-s)A} \Vert_{\mathcal{L}(B^{\alpha}_{1,1}(D), B^{4\theta+\alpha}_{1,1,\mathcal{B}}(D))} \Vert G(u(t)) \Vert_{B^{\alpha}_{1,1}(D)}
	\, \rd s 
	\nonumber\\
	&\leq c_2(\theta)
\end{align*}
for $t\in [0,T_m)\cap [0,T_0]$, where $c_{B,W}>0$ is the corresponding embedding constant. So we have shown \eqref{singul2} and Theorem \ref{wellposedness} (ii) finishes 
the proof of Theorem \ref{globalcriterion}.
\qed
\medskip

It is an open problem to show that if $\lambda$ is sufficiently large, then $T_m$ must be finite. 
\section*{Acknowledgements}
This paper is an edited extract of the author's Ph.D. thesis submitted to the Leibniz Universität Hannover. The author gratefully acknowledges the support of her thesis supervisor Prof. Christoph Walker. 
\bibliographystyle{siam}
\bibliography{HingedParabolic}

\end{document}